\documentclass[11pt, a4paper]{amsart}

\pdfoutput=1

\usepackage[english]{babel} 
\usepackage[utf8]{inputenc}  

\usepackage{amssymb,amsfonts,amsmath}
\usepackage{amsthm}
\theoremstyle{plain}
\newtheorem{theorem}{Theorem}[section]
\newtheorem{proposition}[theorem]{Proposition}
\newtheorem{corollary}[theorem]{Corollary}
\newtheorem{lemma}[theorem]{Lemma}
\theoremstyle{plain}
\newtheorem{definition}[theorem]{Definition}
\newtheorem{fact}[theorem]{Fact}
\theoremstyle{definition}
\newtheorem{remark}[theorem]{Remark}
\newtheorem{example}[theorem]{Example}

\usepackage[hyphens]{url}
\usepackage{hyperref}
\usepackage{graphicx}
\usepackage{multirow}
\usepackage{multicol}

\newcommand{\mrm}[2]{\multirow{#1}{*}{\ensuremath{#2}}}
\newcommand{\mc}[3]{\multicolumn{#1}{#2}{#3}}
\usepackage{fullpage}
\usepackage{subfigure}

\usepackage[norelsize,onelanguage,ruled,vlined,linesnumbered]{algorithm2e}


\def\leqslant{\leq}
\def\geqslant{\geq}

\newcommand{\F}{{\mathbb F}}
\newcommand{\Fp}{{\F_p}}
\newcommand{\Fps}{{\F_{p^2}}}
\newcommand{\Fpc}{{\F_{p^3}}}

\newcommand{\mF}{{\mathcal F}}
\newcommand{\gq}{\mathfrak q}
\newcommand{\Q}{{\mathbb Q}}
\newcommand{\Z}{{\mathbb Z}}
\newcommand{\ZZ}{{\mathbb Z}}

\newcommand{\EE}{{\mathbb{E}}}

\newcommand{\R}{{\mathbb R}}
\newcommand{\C}{{\mathbb C}}
\newcommand{\OO}{{\mathcal O}}

\newcommand{\GFq}[1]{\mathbb{F}_{#1}}

\newcommand{\GFpn}[2]{\mathbb{F}_{{#1}^{#2}}}

\newcommand{\norm}[1]{{|#1|}}
\DeclareMathOperator{\Gal}{Gal}
\DeclareMathOperator{\GL}{GL}

\DeclareMathOperator{\Disc}{Disc}

\DeclareMathOperator{\Reslt}{Res}

\DeclareMathOperator{\ord}{ord}
\DeclareMathOperator{\Norm}{N}
\DeclareMathOperator{\val}{val}


\DeclareMathOperator{\LLL}{LLL}
\DeclareMathOperator{\GJL}{GJL}
\DeclareMathOperator{\Conj}{Conj}
\DeclareMathOperator{\JLSV}{JLSV}
\DeclareMathOperator{\e}{e} 

\title{Improvements to the number field sieve for non-prime finite fields \\
\scriptsize{(preliminary version)}}

\author{Razvan Barbulescu \textsuperscript{1,2,3,4}}
\author{Pierrick Gaudry \textsuperscript{1,2,3}}
\author{Aurore Guillevic \textsuperscript{4,3,2}}
\author{François Morain \textsuperscript{4,3,2}}


\thanks{This research was partially funded by Agence
 Nationale de la Recherche grant ANR-12-BS02-001-01.}

\begin{document}
\maketitle
\begin{center}
\small{
  \textsuperscript{1}Université de Lorraine \par
  \textsuperscript{2}Institut national de recherche en informatique et en
automatique (INRIA)\par 
  \textsuperscript{3}Centre national de la recherche scientifique (CNRS)\par 
  \textsuperscript{4}École Polytechnique/LIX\par 
}
\end{center}

\begin{abstract}
    We propose various strategies for improving the computation of
    discrete logarithms in non-prime fields of medium to large
    characteristic using the Number Field Sieve.
    This includes new methods for selecting the
    polynomials; the use of explicit automorphisms; explicit computations
    in the number fields; and prediction that some units have a zero
    virtual logarithm. On the theoretical side, we obtain a new
    complexity bound of $L_{p^n}(1/3,\sqrt[3]{96/9})$ in the medium
    characteristic case. On the practical side, we computed discrete
    logarithms in $\F_{p^2}$ for a prime number $p$ with $80$
    decimal digits.

    {\bf Warning: This unpublished version contains some inexact
    statements.}
\end{abstract}

\section{Introduction}

Discrete logarithm computations in finite fields is one of the
important topics in algorithmic number theory, partly due to its
relevance to public key cryptography. The complexity of discrete
logarithm
algorithms for finite fields $\F_{p^n}$ depends on the size of the
characteristic $p$ with respect to the cardinality $Q=p^n$. In order to
classify the known methods, it is convenient to use the famous $L$
function.
If $\alpha\in[0,1]$ and $c>0$ are two constants, we set 
\begin{equation*}
L_Q(\alpha,c)=\exp\left((c+o(1))(\log Q)^\alpha (\log \log
Q)^{1-\alpha}\right),
\end{equation*}
and sometimes we simply write $L_Q(\alpha)$ if the
constant $c$ is not made explicit. When we consider discrete logarithm
computations, we treat separately families of finite fields for which
the characteristic $p$ can be written in the form $p=L_Q(\alpha)$ for a
given range of values for $\alpha$. We say that we are dealing with
finite fields of {\em small characteristic} if the family is such that
$\alpha < 1/3$; {\em medium characteristic} if we have $1/3<\alpha<2/3$;
and {\em large characteristic} if $\alpha>2/3$. 
In this article, we concentrate on the cases of medium and large
characteristic. This covers also the situation where $p=L_Q(2/3)$, that
we call the medium--large characteristic boundary case.
We start with a brief overview of the
general situation, including the small characteristic case for
completeness (all the complexities mentioned here are based on
unproven heuristics).

The case of small characteristic is the one that has been improved in the
most dramatic way in the recent years. Before 2013, the best known
complexity of $L_Q(1/3, \sqrt[3]{32/9})$ was obtained with the Function
Field Sieve~\cite{Adl94,AdHu99,JoLe02,JoLe06} but a series of 
improvements~\cite{Jou13faster,JouxL14,BaGaJoTh14,GoGrMGZu13,GrKlZuPower2}
has led
to a quasi-polynomial complexity for fixed characteristic, and more
generally to a complexity of $L_Q(\alpha+o(1))$ when $p = L_Q(\alpha)$,
with $\alpha<1/3$.

The case of large characteristic is covered by an
algorithm called the Number Field Sieve (NFS) that is very close to the
algorithm with the same name used for factoring
integers~\cite{LeLe93,Gor93,Schirokauer1993,JoLe02,Sch05}. This
is particularly true for prime fields, and it shares the same complexity
of $L_Q(1/3,\sqrt[3]{64/9})$. In the case of small extension
degrees, the main reference is a variant by Joux, Lercier, Smart and
Vercauteren~\cite{JLSV06} who showed how to get the same complexity in
the whole range of fields of large characteristic.

The case of medium characteristic was also tackled in the same article,
thus getting a complexity of $L_Q(1/3,\sqrt[3]{128/9})$, with another
variant of NFS.

The complexities listed above use versions of NFS where only two number
fields are involved. It is however known that using more number fields can
improve the complexity. For prime fields it has been done
in~\cite{Mat03,CoSe06}, while for large and medium characteristic,
it has been recently studied in~\cite{BarPie2014}.
In all cases, the complexity remains of the form $L_Q(1/3,c)$, but the
exponent constant $c$ is improved: in the large characteristic case we
have $c=\sqrt[3]{(92 + 26\sqrt{13})/27}$, like for prime fields, while in
the medium characteristic case, we have $c=\sqrt[3]{2^{13}/3^6}$.
For the moment, these multiple number
field variants have not been used for practical record computations (they
have not yet been used either for records in integer factorization).

In the medium--large characteristic boundary case, where $p=L_Q(2/3, c_p)$,
the complexity given in~\cite{BarPie2014} is also of the form $L_Q(1/3,c)$,
where $c$ varies between $16/9$ and $\sqrt[3]{2^{13}/3^6}$ in a way that is
non-monotonic with $c_p$. We also mention another variant of NFS that
has been announced~\cite{PiRaTh} that seems to be better in some range of
$c_p$, when using multiple number fields.

In terms of practical record computations, the case of prime fields has
been well studied, with frequent
announcements~\cite{JoLeRecord05,Kle07,DSA180}. In the case of
medium characteristic, there were also some large computations performed
to illustrate the new methods; see Table 8 in~\cite{JL07} and
\cite{Zajac08,hayasaka13}. However in the case
of non-prime field of large characteristic, we are not aware of previous
practical experiments, despite their potential interest in pairing-based
cryptography.
\medskip

\noindent{\bf Summary of contributions.}
Our two main contributions are, on one side, new complexity results for the
finite fields of medium characteristic, and on the other side, a practical
record computation in a finite field of the form $\F_{p^2}$. 

Key tools for these results are two new methods for selecting the number
fields; the first one is a generalization of the method by Joux and
Lercier~\cite{JoLe03} and we call the second one the conjugation method.
It turned out that both of them have practical and theoretical
advantages.

On the theoretical side, the norms that must be tested for smoothness
during NFS based on the conjugation method or the generalized
Joux-Lercier method are smaller than the ones obtained with previous
methods for certain kind of finite fields. Therefore, the probability of
being smooth is higher, which translates into a better complexity.
Depending on the type of finite fields, the gain
is different:
\begin{itemize}
    \item In the medium characteristic finite fields, NFS with the conjugation
        method has a complexity of $L_Q(1/3, \sqrt[3]{96/9})$. This is
        much better than the complexity of $L_Q(1/3, \sqrt[3]{128/9})$
        obtained in~\cite{JLSV06} and also beats the $L_Q(1/3,
        \sqrt[3]{2^{13}/3^6})$ complexity of the multiple number
        field algorithm of~\cite{BarPie2014}.
    \item In the medium--large characteristic boundary case, the
        situation is more complicated, but there are also families of
        finite fields for which the best known complexity is obtained
        with the conjugation method or with the generalized Joux-Lercier
        method. The overall minimal complexity is obtained for fields
        with $p=L_Q(2/3, \sqrt[3]{12})$, where the complexity drops to
        $L_Q(1/3,\sqrt[3]{48/9})$ with the conjugation method.
\end{itemize}

On the practical side, the two polynomials generated by the conjugation
method (and for one of the polynomials with the generalized Joux-Lercier
construction) enjoy structural properties: it is often possible to use
computations with explicit units (as was done in the early ages of NFS
for factoring, before Adleman introduced the use of characters), thus
saving the use of Schirokauer maps that have a non-negligible cost during
the linear algebra phase. Furthermore, it is also often possible to
impose the presence of field automorphisms which can be used to speed-up
various stages of NFS, as shown in~\cite{JLSV06}.

Finally, the presence of automorphisms can interact with the
general NFS construction and lead to several units having zero virtual
logarithms. This is again very interesting in practice, because some
dense columns (explicit units or Schirokauer maps) can be erased in the
matrix. A careful study of this phenomenon allowed us to predict
precisely when it occurs.

All these practical improvements do not change the complexity but make
the computations faster. In fact, even though the conjugation method is
at its best for medium characteristic, it proved to be competitive even
for quadratic extensions. It was therefore used in our record computation
of discrete logarithm in the finite field $\F_{p^2}$ for a random-looking
prime $p$ of 80 decimal digits.  The running time was much less than what
is required to solve the discrete logarithm problem in a prime field of
similar size, namely 160 decimal digits.

\medskip

\noindent{\bf Outline.} In Section~\ref{sec:refresher} we make a quick
presentation of NFS, and we insist on making precise the definitions of
virtual logarithms in the case of explicit units and in the case of
Schirokauer maps. In Section~\ref{sec:galois} we show how to
obtain a practical improvement using field automorphisms, again taking
care of the two ways of dealing with units. Then, in
Section~\ref{sec:vanishing} we explain how to predict the cases where the
virtual logarithm of a unit is zero, and in Section~\ref{sec:units} we
show how to use this knowledge to reduce the number of Schirokauer maps if we
do not use explicit units. Finally, in Section~\ref{sec:polyselect} we
present our two new methods for selecting polynomials, the complexities of
which are analyzed in Section~\ref{sec:complexity}. We conclude in
Section~\ref{sec:effective} with a report about our practical computation
in $\F_{p^2}$.

\section{The number field sieve and virtual logarithms}
\label{sec:refresher} 
\subsection{Sketch of the number field sieve algorithm}

In a nutshell, the number field sieve for discrete logarithms in
$\F_{p^n}$ is as follows. In the first stage, called polynomial selection,
two polynomials $f,g$ in $\Z[x]$ are constructed (we assume that $\deg f
\geqslant \deg g$), such that their reductions modulo $p$ have a
common monic irreducible
factor $\varphi_0$ of degree $n$. For simplicity, we assume that $f$ and
$g$ are monic. 
We call $\varphi$ a monic
polynomial of $\Z[x]$ whose reduction modulo $p$ equals $\varphi_0$. Let
$\alpha$ and $\beta$ be algebraic numbers such that $f(\alpha)=0$ and
$g(\beta)=0$ and let $m$ be a root of $\varphi_0$ in $\F_{p^n}$, allowing us to
write $\F_{p^n}=\F_p(m)$. Let $K_f$ and $K_g$ be the number fields
associated to $f$ and $g$ respectively, and $\OO_f$ and $\OO_g$ their
rings of integers.
\smallskip

For the second stage of NFS, called relation collection or sieve, a
smoothness bound $B$ is chosen and we consider the associated 
factor base
\begin{equation*}
\mF=\{\text{prime ideals $\gq$ in $\OO_f$ and $\OO_g$
of norm less than }B\},
\end{equation*}
that we decompose into $\mF = \mF_f \cup \mF_g$ according to the ring of
integers to which the ideals belong.
An integer is $B$-smooth if all its prime factors are less
than $B$. For any polynomial $\phi(x)\in\Z[x]$, the algebraic integer
$\phi(\alpha)$ (resp. $\phi(\beta))$) in $K_f$ (resp. $K_g$) is
$B$-smooth if the corresponding principal ideal $\phi(\alpha)\OO_f$ (resp.
$\phi(\beta)\OO_g$) factors
into prime ideals that belong to $\mF_f$ (resp. $\mF_g$).
This is almost, but not exactly equivalent to asking that the norm 
$\Reslt(\phi,f)$ (resp. $\Reslt(\phi,g)$) is $B$-smooth.

In the sieve stage, one collects $\#\mF$ polynomials $\phi(x)\in\Z[x]$ with
coprime coefficients and degree bounded by $t-1$, for a parameter $t\geq 2$
to be chosen, such that both $\phi(\alpha)$ and $\phi(\beta)$ are
$B$-smooth, so that we get {\em relations} of the form:
\begin{equation} \label{eq:doubly smooth}
    \left\{ \begin{array}{l}
\phi(\alpha)\OO_f=\prod_{\gq\in\mF_f}\gq^{\val_\gq\left(\phi(\alpha)\right)}\\
\phi(\beta)\OO_g=\prod_{\mathfrak{r}\in\mF_g}
    \mathfrak{r}^{\val_\mathfrak{r}\left(\phi(\beta)\right)}.\\
\end{array} \right.
\end{equation}
The norm of $\phi(\alpha)$ (resp. of $\phi(\beta)$) is
the product of the norms of the ideals in the right hand side and will be
(crudely) bounded by the size of the finite field; therefore
the number of ideals involved in a relation is less than $\log_2 (p^n)$.
One can also remark that the ideals that can occur in a relation have
degrees that are at most equal to the degree of $\phi$, that is $t-1$.
Therefore, it makes sense to include in $\mF$ only the ideals of degree
at most $t-1$ (for a theoretical analysis of NFS one can consider the
variant where only ideals of degree one are included in the factor base).

In order to estimate the probability to get a relation for a polynomial
$\phi$ with given degree and size of coefficients, we make 
the common heuristic that the integer $\Reslt(\phi,f)\cdot \Reslt(\phi,g)$ has the
same probability to be $B$-smooth as a random integer of the same size
and that the bias due to powers is negligible. Therefore, reducing the
expected size of this product of norms is the main criterion when
selecting the polynomials $f$ and $g$.
\smallskip

In the linear algebra stage, each relation is rewritten as a linear
equation between the so-called virtual logarithms of the factor base
elements. We recall this notion in Section~\ref{ssec:virtual logarithms}.
We make the usual heuristic that this system has a space of solutions of
dimension one. Since the system is sparse, an iterative algorithm like 
Wiedemann's~\cite{Wiedemann1986} is used to compute 
a non-zero solution in quasi-quadratic time. This gives the (virtual)
logarithms of all the factor base elements.

In principle, the coefficient ring of the
matrix is $\Z/(p^n-1)\Z$, but it is enough to solve it modulo each prime
divisor $\ell$ of $p^n-1$ and then to recombine the results using the
Pohlig-Hellman algorithm~\cite{PohligHellman1978}. Since one can use
Pollard's method~\cite{Pol78} for small primes $\ell$, we can suppose that
$\ell$ is larger than $L_{p^n}(1/3)$. It allows us then to assume that $\ell$ is
coprime to $\Disc(f)$, $\Disc(g)$, the class numbers of $K_f$ and $K_g$,
and the orders of the roots of unity in $K_f$ and $K_g$. These
assumptions are used in many places in the rest of the article, sometimes
implicitly.
\smallskip

In the last stage of the algorithm, called individual logarithm, the
discrete logarithm of any element $z=\sum_{i=0}^{n-1}z_i m^i$ of
$\F_{p^n}$ in the finite field is computed.  For this, we associate to
$z$ the algebraic number $\overline{z}=\sum_{i=0}^{n-1}z_i\alpha^i$ in
$K_f$ and check whether the corresponding principal ideal factors into
prime ideals of norms bounded by a quantity $B'$ larger than $B$. We also
ask the prime ideals to be of degree at most $t-1$. If $\overline{z}$
does not verify these smoothness assumptions, then we replace $z$ by
$z^e$ for a randomly chosen integer $e$ and try again. 
This allows to obtain a linear equation similar to those of the
linear system, in which one of the unknowns is $\log z$. The second step
of the individual logarithm stage consists in obtaining relations between a
prime ideal and prime ideals of smaller norm, until all the ideals involved
are in $\mF$. This allows to backtrack and obtain $\log z$.

\subsection{Virtual logarithms}
\label{ssec:virtual logarithms} 

In this section, we recall the definition of virtual logarithms, while
keeping in mind that in the rest of the article, we are going to use
either explicit unit computations or Schirokauer maps.
The constructions work independently in each number field, so we explain
them for the field $K_f$ corresponding to the polynomial $f$. During NFS,
this is also applied to $K_g$.

We start by fixing a notation for the ``reduction modulo $p$'' map that
will be used in several places of the article.

\begin{definition}[Reduction map]
    Let $\rho_f$ be the map from $\OO_f$ to $\F_{p^n}$ defined by the
    reduction modulo the prime ideal $\mathfrak{p}$ above $p$ that
    corresponds to the factor $\varphi$ of $f$ modulo $p$. This is a ring
    homomorphism. Furthermore, if the norm of $z$ is coprime to $p$, then
    $\rho_f(z)$ is non-zero in $\F_{p^n}$. We can therefore extend
    $\rho_f$ to the set of elements of $K_f$ whose norm has a
    non-negative valuation at $p$.
    
    Since in this article we will often consider the discrete logarithm
    of the images by $\rho_f$, we restrict its definition to the elements
    of $K_f$ whose norm is coprime to $p$, for which the image is
    non-zero.
\end{definition}

Let $h$ be the class number $K_f$ that we assume to be coprime to the
prime $\ell$ modulo which the logarithms are computed. We also need to
consider the group of units $U_f$ in $\OO_f$. By Dirichlet's theorem it is
a finitely generated abelian group of the form
  $$U_f \sim U_{tors} \times \Z^{r},$$
where $r$ is the unit rank given by $r = r_1 + r_2 - 1$ where
$r_1$ is the number of real roots of $f$ and $2 r_2$ the number of
complex roots, and $U_{tors}$ is cyclic. Any unit $\eta \in U_f$ can be
written
$$\eta = \varepsilon_0^{u_0} \prod_{j=1}^{r} \varepsilon_j^{u_j}$$
for {\em fundamental units} $\varepsilon_j$, $j \geq 1$, and
$\varepsilon_0$ a root of unity.

For each prime ideal $\gq$ in the factor base $\mF_f$, the ideal $\gq^h$
is principal and therefore there exists a generator $\gamma_\gq$ for it.
It is not at all unique, and the definition of the virtual logarithms
will depend on the choice of the fundamental units and of the set of
generators for all the ideals of $\mF_f$. We denote by $\Gamma$ this
choice, and will use it as a subscript in our notations to remember the
dependence in $\Gamma$. In particular, the notation $\log_\Gamma$ used
just below means that the definition of the virtual logarithm depends on
the choice of $\Gamma$, and does not mean that the logarithm is given in
base $\Gamma$; in fact all along the article we do not make explicit the
generator used as a basis for the logarithm in the finite field.

\begin{definition}[Virtual logarithms -- explicit version]
    Let $\gq$ be an ideal in the factor base $\mF_f$, and $\gamma_\gq$
    the generator for its $h$-th power, given by the choice $\Gamma$.
    Then the virtual logarithm of $\gq$ w.r.t. $\Gamma$ is given by
    \begin{equation*}
        \log_\Gamma\gq\equiv h^{-1}\log(\rho_f(\gamma_\gq)) \mod \ell,
    \end{equation*}
    where the $\log$ notation on the right-hand side is the discrete
    logarithm function in $\F_{p^n}$.

    In the same manner, we define the virtual logarithms of the
    units by
    \begin{equation*} 
        \log_\Gamma\varepsilon_j \equiv
        h^{-1}\log(\rho_f(\varepsilon_j)) \mod \ell.
    \end{equation*}
\end{definition}

We now use this definition to show that for any polynomial $\phi$
yielding a relation, we can obtain a linear expression between the logarithm of
$\rho_f(\phi(\alpha))$ in the finite field and the virtual logarithms of the ideals
involved in the factorization of the ideal $\phi(\alpha)\OO_f$:
$$
\phi(\alpha)\OO_f=\prod_{\gq\in\mF_f} \gq^{\val_{\gq}\left( \phi(\alpha)
\right)}.
$$
After raising this equation to the power $h$, we get an equation between
principal ideals that can be rewritten as the following equation between field
elements:
$$
\phi(\alpha)^h=\varepsilon_0^{u_{\phi, 0}}
               \prod_{j=1,r}\varepsilon_j^{u_{\phi,j}}
               \prod_{\gq\in\mF_f}
                  \gamma_\gq^{\val_{\gq}\left( \phi(\alpha) \right)},
$$
where the $u_{\phi,j}$ are integers used to express the unit that pops up
in the process. We then apply the map $\rho_f$, and use 
the fact that it is an homomorphism. We obtain therefore
$$
\rho_f(\phi)^h = \rho_f(\varepsilon_0)^{u_{\phi,0}}\prod_{j=1,r}
\rho_f(\varepsilon_j)^{u_{\phi,j}}\prod_{\gq\in\mF_f}
           \rho_f(\gamma_\gq)^{\val_{\gq}\left( \phi(\alpha) \right)},
$$
from which we deduce our target equation by taking logarithms on both
sides:
\begin{equation}
\label{eq:explicit}
\log\left(\rho_f(\phi(\alpha))\right) \equiv 
      \sum_{j=1}^ru_{\phi,j}\log_\Gamma\varepsilon_j +
      \sum_{\gq\in\mF_f}\val_\gq\left(\phi(\alpha)\right) \log_\Gamma\gq
      \mod \ell.
\end{equation}
In this last step, the contribution of the root of unity $\varepsilon_0$
has disappeared. Indeed, the following simple lemma states that its
logarithm vanishes modulo $\ell$.
\begin{lemma} \label{lem:roots of unity}
    Let $\varepsilon_0$ be a torsion unit of order $r_0$ and assume that
    $\gcd(hr_0, \ell) = 1$. Then we have $\log_\Gamma\varepsilon_0 \equiv 0
    \mod \ell$.
\end{lemma}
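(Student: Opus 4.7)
The plan is to unwind the definition and exploit the fact that $\rho_f$ is a ring homomorphism, so it maps roots of unity to roots of unity. By definition,
\[
\log_\Gamma \varepsilon_0 \equiv h^{-1} \log(\rho_f(\varepsilon_0)) \bmod \ell,
\]
and the assumption $\gcd(hr_0,\ell)=1$ already guarantees that $h^{-1}$ makes sense modulo $\ell$, so it suffices to show that $\log(\rho_f(\varepsilon_0))\equiv 0 \bmod \ell$.

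Since $\varepsilon_0$ has order $r_0$ in $U_f$, we have $\varepsilon_0^{r_0}=1$ in $\OO_f$. Applying the homomorphism $\rho_f$ gives $\rho_f(\varepsilon_0)^{r_0}=1$ in $\F_{p^n}^*$, so $\rho_f(\varepsilon_0)$ is an $r_0$-th root of unity in the multiplicative group of the finite field. Taking the discrete logarithm (which is well-defined modulo $p^n-1$), this means $r_0\cdot \log(\rho_f(\varepsilon_0))\equiv 0 \bmod (p^n-1)$. Recall that $\ell$ divides $p^n-1$ (it is one of its prime divisors chosen in the Pohlig-Hellman step), so a fortiori
\[
r_0 \cdot \log(\rho_f(\varepsilon_0))\equiv 0 \bmod \ell.
\]

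Now the hypothesis $\gcd(r_0,\ell)=1$ lets me divide by $r_0$ modulo $\ell$, yielding $\log(\rho_f(\varepsilon_0))\equiv 0 \bmod \ell$. Multiplying by $h^{-1}\bmod \ell$ (well-defined thanks to $\gcd(h,\ell)=1$) gives the claim. There is no real obstacle: the only subtlety to flag is the use of $\ell\mid p^n-1$, which is implicit in the framework but worth recording, and the clean role played by both factors of the hypothesis $\gcd(hr_0,\ell)=1$.
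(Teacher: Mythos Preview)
Your proof is correct and follows essentially the same approach as the paper: apply $\rho_f$ to $\varepsilon_0^{r_0}=1$, take discrete logarithms, and use $\gcd(hr_0,\ell)=1$ to cancel. The paper's version is terser (it writes directly $hr_0\log_\Gamma\varepsilon_0\equiv 0$), but the content is the same.
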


\begin{proof}
    Since $\varepsilon_0^{r_0}=1$ in $K_f$, we have
    $\rho_f(\varepsilon_0)^{r_0} = 1$ in
    $\F_{p^n}$ and we get
    $hr_0 \log_\Gamma \varepsilon_0 \equiv 0 \mod \ell$.
\end{proof}

In order to make the equation~\ref{eq:explicit} explicit for a given
$\phi$ that yields a relation, it is necessary to compute the class
number $h$ of $K_f$, to find the generators of all the $\gq^h$ and to
compute a set of fundamental units. These are reknowned to be difficult
problems except for polynomials $f$ with tiny coefficients.

We now recall an alternate definition of virtual logarithms based on
the so-called Schirokauer maps, for which none of the above have to be
computed explicitly.

\begin{definition}[Schirokauer maps]
    Let $K_\ell$ be the multiplicative subgroup of $K_f^*$ of elements
    whose norms are coprime to $\ell$.

    A Schirokauer map is an application
    $\Lambda:(K_\ell)/(K_\ell)^\ell\rightarrow (\Z/\ell\Z)^{r}$
    such that
    \begin{itemize}
        \item $\Lambda(\gamma_1\gamma_2)=\Lambda(\gamma_1)+\Lambda(\gamma_2)$
    ($\Lambda$ is linear);
        \item $\Lambda(U_f)$ is surjective
            ($\Lambda$ preserves the unit rank).
    \end{itemize}
\end{definition}

Schirokauer~\cite{Schirokauer1993} proposed a fast-to-evaluate map
satisfying these conditions that we recall now.
Let us define first an integer, that is the LCM of the exponents required
to apply Fermat's theorem in each residue field modulo $\ell$:
$$\epsilon=\text{lcm}\{\ell^\delta-1,\ \text{such that}\ f(x)\bmod
    \ell\ \text{has an irreducible factor of degree }\delta\}.$$
Then, by construction, for any element $\gamma$ in $K_\ell$, we have
$\gamma^\epsilon$ congruent to $1$ in all the residue fields above
$\ell$. Therefore, the map
\begin{equation}\label{eq:polschi}
\gamma(\alpha) \mapsto \frac{\gamma(x)^\epsilon-1}{\ell}\bmod (\ell, f(x)),
\end{equation}
is well defined for $\gamma\in K_\ell$. Taking the coordinates of the
image of this map in the basis $1,X,\ldots,X^{\deg f-1}$, we can expect
to find $r$ independent linear combinations of these coordinates. They
then form a Schirokauer map.
In~\cite{Sch05}, Schirokauer gave heuristic arguments for the existence
of such independent linear combinations; and in practice, in most of the
cases, taking the $r$ first coordinates is enough.
\smallskip

From now on, we work with a fixed choice of Schirokauer map that we
denote by $\Lambda$. We start by taking another set of $r$ independent
units: for each $j\in[1,r]$, we choose a unit $\varepsilon_j$ such that
$$\Lambda(\varepsilon_j)=(0,\ldots,0,h,0,\ldots,0),$$
where the coordinate $h$ is in the $j$-th position.
We can then refine the choice of the generators of
the $h$-th power of the factor base ideals, so that we get another
definition of the virtual logarithms.

\begin{definition}[Virtual logarithms -- Schirokauer's version]
    Let $\Lambda$ be a Schirokauer map as described above.
    Let $\gq$ be an ideal in the factor base $\mF_f$, and $\gamma_\gq$
    an (implicit) generator for its $h$-th power, such that $\Lambda(\gamma_\gq) = 0$.
    Then the virtual logarithm of $\gq$ w.r.t. $\Lambda$ is given by
    \begin{equation*}
        \log_\Lambda\gq\equiv h^{-1}\log(\rho_f(\gamma_\gq)) \mod \ell.
    \end{equation*}
    The virtual logarithms of the units are defined in a similar manner:
    \begin{equation*} 
        \log_\Lambda\varepsilon_j \equiv
        h^{-1}\log(\rho_f(\varepsilon_j)) \mod \ell.
    \end{equation*}
\end{definition}

As shown in~\cite{Sch05}, by an argument similar to the
case of explicit generators, one can write
\begin{equation}
\label{eq:implicit}
\log\left(\rho_f(\phi(\alpha))\right)\equiv
    \sum_{j=1}^r\lambda_j\left(\phi(\alpha)\right)\log_\Lambda\varepsilon_j
   +\sum_{\gq\in\mF_f}\val_\gq\left(\phi(\alpha)\right)
              \log_\Lambda\gq\mod \ell,
\end{equation}
where $\lambda_j$ is the $j$-th coordinate of $\Lambda$.

\subsection{Explicit units or Schirokauer maps?}

Equation~\eqref{eq:implicit} can be written for the two polynomials $f$
and $g$ and hence we obtain a linear equation relating only virtual
logarithms. We remark that it is completely allowed to use the virtual
logarithms in their explicit version for one of the polynomials if it is
feasible, while using Schirokauer maps on the other side.

Using explicit units requires to compute a generator for each
ideal in the factor base, and therefore the polynomial must have small
coefficients (and small class number). A lot of techniques and algorithms
are well described in \cite{LeLe93}. These include generating units and
generators in some box or ellipsoid of small lengths, and recovery of
units using floating point computations. These are quite easy to
implement and are fast in practice. We may do some simplifications when
$K_f$ has non-trivial automorphisms, since in this case the generators of
several ideals can be computed from one another using automorphisms (see
Section \ref{sec:galois}).

In the general case, one uses
Schirokauer maps whose coefficients are elements of $\Z/\ell\Z$ for a
large prime $\ell$. In our experiments, the values of the Schirokauer
maps seem to spread in the full range $[0,\ell-1]$ and must be stored
on $\log_2\ell$ bits. In a recent record~\cite{DSA180}, each row of
the matrix consisted in average of $100$ non-zero entries in the
interval $[-10,10]$ and two values in $[0,\ell-1]$, for a prime $\ell$
of several machine words. It is then worth to make additional
computations in order to reduce the number of Schirokauer maps. This
motivated our study in Section \ref{sec:units}.

\section{Exploiting automorphisms}
\label{sec:galois}
Using automorphisms of the fields involved in a discrete logarithm
computation is far from being a new idea. It was already proposed by
Joux, Lercier, Smart and Vercauteren~\cite{JLSV06} and was a key
ingredient in many of the recent record computations in small
characteristic~\cite{JouxRecord13,GrKlZu14}.
In this section we recall the basic idea and make explicit the
interaction with both definitions of virtual logarithms, using or not
Schirokauer maps.

\subsection{Writing Galois relations} 

The results of this subsection apply potentially to both number fields
$K_f$ and $K_g$ independently. Therefore, we will express all the
statements with the notations corresponding to the polynomial $f$ (that
again, we assume to be monic for simplicity).

We assume that $K_f$ has an automorphism $\sigma$, and we denote by
$A_\sigma$ and $A_{\sigma^{-1}}$ the
polynomials of $\Q[x]$ such that $\sigma(\alpha)=A_\sigma(\alpha)$ and
$\sigma^{-1}(\alpha)=A_{\sigma^{-1}}(\alpha)$.  For any
subset $I$ of $K_f$, we denote by $I^\sigma$ the set $\{\sigma(x)\mid x\in
I\}$.

\begin{proposition} Let $q$ be a rational prime not dividing the index
$\left[\OO:\Z[\alpha]\right]$ of the polynomial~$f$. Then, any prime ideal
above $q$ of degree one can be generated by two elements of the form
$I=\langle q,\alpha-r\rangle$ for
some root $r$ of $f$ modulo $q$. If the denominators of the
coefficients of $A_\sigma$ and $A_{\sigma^{-1}}$ are not divisible by $q$,
then we have \begin{equation*} I^\sigma=\left\langle
q,\alpha-A_{\sigma^{-1}}(r)\right\rangle.  \end{equation*}
\end{proposition}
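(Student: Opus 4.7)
The plan is to handle the proposition's two assertions in sequence, using the Kummer--Dedekind correspondence throughout. For the first claim, I would invoke the standard fact that, because $q$ does not divide the conductor $[\OO_f:\Z[\alpha]]$, the decomposition of $q\OO_f$ into prime ideals mirrors the factorization of $f \bmod q$ in $\F_q[x]$. A prime of residue degree one above $q$ thus corresponds to a linear factor $(x-r)$ with $r$ a root of $f$ modulo $q$, and the prime is $\langle q,\alpha-r\rangle$.

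For the main identity, I would first rewrite $I^\sigma$ by acting on the generators. Since $\sigma$ restricts to a $\Z$-algebra automorphism of $\OO_f$, it maps $I=\langle q,\alpha-r\rangle$ to the ideal generated by the images of the two generators, giving $I^\sigma=\langle q,\sigma(\alpha)-r\rangle=\langle q,A_\sigma(\alpha)-r\rangle$. The strategy is then to show that $J:=\langle q,\alpha-A_{\sigma^{-1}}(r)\rangle$ is also a degree-one prime above $q$ and that $I^\sigma\subseteq J$; equality will then follow because both are maximal ideals of the same norm $q$.

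To check both points I would use the two polynomial identities $f(A_{\sigma^{-1}}(x))\equiv 0\pmod{f(x)}$ and $A_\sigma(A_{\sigma^{-1}}(x))\equiv x\pmod{f(x)}$, which hold in $\Q[x]$ because $\sigma^{-1}(\alpha)$ is a root of $f$ and $\sigma\circ\sigma^{-1}=\id$. The hypothesis on denominators lets me upgrade these congruences to identities in $\Z_{(q)}[x]$, where I can safely reduce modulo $q$. Specializing the first at $x=r$ shows $A_{\sigma^{-1}}(r)$ is a root of $f\bmod q$, so by the Kummer--Dedekind step $J$ is a degree-one prime above $q$. Specializing the second at $x=r$ and reducing modulo $q$ gives $A_\sigma(A_{\sigma^{-1}}(r))\equiv r\pmod q$; combined with $\alpha\equiv A_{\sigma^{-1}}(r)\pmod J$, this yields $A_\sigma(\alpha)-r\in J$, and together with $q\in J$ this gives $I^\sigma\subseteq J$, hence equality.

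The main obstacle, and really the only delicate point, is the bookkeeping around denominators: $A_\sigma$ and $A_{\sigma^{-1}}$ live a priori in $\Q[x]$, so every reduction modulo $q$ must be justified by the fact that the polynomial in question lies in $\Z_{(q)}[x]$. Once the denominator hypothesis is in force and the two polynomial identities above are written in $\Z_{(q)}[x]$, the remainder of the proof is a direct application of the Kummer--Dedekind dictionary and causes no further trouble.
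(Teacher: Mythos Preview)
Your proof is correct and follows the same overall architecture as the paper's: establish via Kummer--Dedekind that $J=\langle q,\alpha-A_{\sigma^{-1}}(r)\rangle$ is a degree-one prime above $q$, prove one inclusion between $I^\sigma$ and $J$, and conclude by maximality. The difference lies in which inclusion is proved and with what tool. You apply $\sigma$ to $I$, obtaining $I^\sigma=\langle q,A_\sigma(\alpha)-r\rangle$, and then use the composition identity $A_\sigma(A_{\sigma^{-1}}(x))\equiv x\pmod{f(x)}$ reduced modulo $q$ to place the generator $A_\sigma(\alpha)-r$ inside $J$, giving $I^\sigma\subseteq J$. The paper instead applies $\sigma^{-1}$ to $J$, writes $J^{\sigma^{-1}}=\langle q,A_{\sigma^{-1}}(\alpha)-A_{\sigma^{-1}}(r)\rangle$, and observes that the polynomial $A_{\sigma^{-1}}(x)-A_{\sigma^{-1}}(r)$ is divisible by $x-r$ (the ordinary factor theorem), whence $J^{\sigma^{-1}}\subseteq I$ and so $J\subseteq I^\sigma$. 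The paper's route is marginally lighter: it never invokes $A_\sigma$ nor the identity $A_\sigma\circ A_{\sigma^{-1}}\equiv\id\pmod f$, relying only on the trivial divisibility $(x-r)\mid p(x)-p(r)$. Your route has the virtue of computing $I^\sigma$ directly from its generators, which makes the statement feel more constructive, at the cost of one extra polynomial identity to verify and reduce modulo $q$.
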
 \begin{proof} Since $\sigma^{-1}$ is an automorphism, we
have $f\left( A_{\sigma^{-1}}(\alpha) \right)=0$. This is equivalent to
$f(A_{\sigma^{-1}}(x))\equiv 0 (\bmod f(x))$ and then
$f(A_{\sigma^{-1}}(x))=u(x) f(x)$ for some polynomial $u\in\Q[x]$. By
evaluating in $r$ we obtain $f(A_{\sigma^{-1}}(r))\equiv 0\pmod q$. Then,
by Dedekind's Theorem, $J=\left\langle
q,\alpha-A_{\sigma^{-1}}(r)\right\rangle$ is a prime ideal of degree one.

Since $q$ and $A_{\sigma^{-1}}(r)$ are rational, we have
$J^{\sigma^{-1}}=\langle q,A_{\sigma^{-1}}(\alpha)-A_{\sigma^{-1}}(r)\rangle$.
Since the polynomial $ A_{\sigma^{-1}}(x)-A_{\sigma^{-1}}(r)$ is divisible
by $x-r$, $J^{\sigma^{-1}}$ belongs to $\langle q,\alpha-r\rangle=I$. Therefore, $J$ belongs to
$I^\sigma$. But $J$ is prime, so $J=I^\sigma$.

\end{proof}

Before stating the main result on the action of $\sigma$ on the virtual
logarithms, we need the following result on the Schirokauer maps.

\begin{lemma}\label{lem:kernel} 
    Let $\Lambda$ be a Schirokauer map modulo $\ell$ associated to $K_f$ and
    let $\sigma$ be an automorphism of $K_f$. Assume in addition that 
    this Schirokauer map is based on the construction of
    Equation~\ref{eq:polschi}.
    Then we have
    \[\ker \Lambda= \ker (\Lambda\circ \sigma).\]
\end{lemma}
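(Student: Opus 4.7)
The plan is to work with the ``pre-projection'' map of Equation~\eqref{eq:polschi},
\[
\tilde\Lambda\colon K_\ell\longrightarrow \OO_f/\ell\OO_f,\qquad
\tilde\Lambda(\gamma) = \frac{\gamma^\epsilon - 1}{\ell} \bmod (\ell, f(x)),
\]
writing the Schirokauer map as $\Lambda = \pi\circ\tilde\Lambda$ for the linear projection $\pi\colon \OO_f/\ell\OO_f \to (\Z/\ell\Z)^r$ that selects the $r$ chosen coordinates.

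The first step is an intertwining identity at the raw level. Since $\ell$ is a rational prime, $\sigma$ fixes the ideal $\ell\OO_f$, and hence descends to a ring automorphism $\bar\sigma$ of $\OO_f/\ell\OO_f$. Using that $\sigma$ is a ring homomorphism and that $\epsilon\in\Z$ is $\sigma$-fixed, a direct computation gives
\[
\tilde\Lambda(\sigma(\gamma)) = \bar\sigma\bigl(\tilde\Lambda(\gamma)\bigr) \qquad \text{for all } \gamma\in K_\ell.
\]
In particular $\ker\tilde\Lambda$ is $\sigma$-stable; this packages essentially all of the ring-theoretic content of the lemma.

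The second step is to pass from $\tilde\Lambda$ to the actual Schirokauer map $\Lambda$. Since $\Lambda\circ\sigma = \pi\circ\bar\sigma\circ\tilde\Lambda$, both $\Lambda$ and $\Lambda\circ\sigma$ are Schirokauer maps based on the same construction: linearity is preserved, and because $\sigma(U_f)=U_f$, the restriction to $U_f/U_f^\ell$ remains surjective, hence an isomorphism by rank considerations and Lemma~\ref{lem:roots of unity}. I would then try to conclude that $\Lambda$ and $\Lambda\circ\sigma$ differ only by an invertible $\F_\ell$-linear change of coordinates on the target $(\Z/\ell\Z)^r$, from which $\ker\Lambda = \ker(\Lambda\circ\sigma)$ follows immediately.

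The main obstacle is precisely this last step. The subspace $\ker\pi$ of $\OO_f/\ell\OO_f$ is not \emph{a priori} $\bar\sigma$-stable, so descending the $\sigma$-stability from $\ker\tilde\Lambda$ to $\ker\Lambda$ requires additional information about how $\pi$ intersects the image of $\tilde\Lambda$. I expect the resolution to come from arguing that the relevant $\tilde\Lambda$-image --- the one hit by classes in $K_\ell/(K_\ell)^\ell$ rather than by all ring elements --- is itself a $\bar\sigma$-stable subspace of dimension $r$, on which $\pi$ is already an isomorphism and the question of $\bar\sigma$-stability of $\ker\pi$ becomes moot.
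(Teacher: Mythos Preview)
Your first step --- the intertwining identity $\tilde\Lambda\circ\sigma = \bar\sigma\circ\tilde\Lambda$ and the resulting $\sigma$-stability of $\ker\tilde\Lambda$ --- is exactly the paper's argument, only the paper carries it out by explicit polynomial substitution rather than by naming the induced automorphism $\bar\sigma$. It writes $\gamma = P(\alpha)$, translates membership in the kernel into the identity $P(x)^\epsilon - 1 = \ell^2 u(x) + \ell v(x) f(x)$ in $\Z[x]$, substitutes $x\mapsto A_\sigma(x)$, and observes that $f(A_\sigma(x))$ is a multiple of $f(x)$ because $\sigma$ is an automorphism; this yields the same identity for $P\circ A_\sigma$, hence $\sigma(\gamma)$ is again in the kernel. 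That is the entire proof in the paper.

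What the paper does \emph{not} do is distinguish $\tilde\Lambda$ from $\Lambda=\pi\circ\tilde\Lambda$: its opening sentence passes from ``$\gamma\in\ker\Lambda$'' straight to the polynomial identity above, which is really the condition $\tilde\Lambda(\gamma)=0$. So the obstacle you flag in your second and third steps is one the paper simply does not confront; you are being more scrupulous than the text. Your proposed resolution, however, does not close the gap as stated: the image of $\tilde\Lambda$ on $K_\ell/(K_\ell)^\ell$ is not in general $r$-dimensional --- only $\tilde\Lambda(U_f)$ is forced to have dimension $r$ by the Schirokauer axioms, while the full image is typically larger --- so one cannot conclude from those axioms alone that $\pi$ is injective on it. In short, the paper's proof and your sketch both rest, at this point, on the standard (and in practice harmless) heuristic that the chosen $r$ coordinates are faithful on the relevant image.
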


\begin{proof}
Let $A_\sigma(x)\in\Z[x]$ be such that $A_\sigma(\alpha)=\sigma(\alpha)$.
If $\gamma=P(\alpha)$ is in the kernel of $\Lambda$, then there exist
$u,v\in \Z[x]$ such that
\begin{equation}
P(x)^\epsilon-1=\ell^2u(x)+\ell v(x) f(x). 
\end{equation}
By substituting $A_\sigma(x)$ to $x$, we obtain
\begin{equation}
P(A_\sigma(x))^\epsilon-1=\ell^2u(A_\sigma(x))+\ell v(A_\sigma(x)) f(A_\sigma(x)). 
\end{equation}
 Since $\sigma$ is an automorphism of $f$, $f(A_\sigma(x))$ is a multiple
of $f(x)$. Hence, we obtain that $\sigma(\gamma)=P(A_\sigma(\alpha))$ is in the kernel of
$\Lambda$. 
\end{proof}

\begin{example}
When $f$ is an even polynomial, i.e.~ $f(-x)=f(x)$, the application
$\sigma(x)=-x$ is an automorphism of the number field $K_f=\Q[x]/f(x)$.
Consider the Schirokauer map as defined in Equation~\ref{eq:polschi}.
We denote by $\Lambda=(\lambda_1,\ldots,\lambda_r)$ the $r$ first
coordinates in basis $1,X,\ldots,X^{\deg f-1}$, and we assume that they
are independent, so that $\Lambda$ is indeed a Schirokauer map.
Then applying the automorphism, we get
$\Lambda\circ\sigma=(\lambda_1,-\lambda_2,\lambda_3,-\lambda_4,\ldots,
(-1)^{r+1}\lambda_r)$, and we can check that its kernel coincides with
the kernel of $\Lambda$.
\end{example}

The following counter-example shows that the condition that $\Lambda$ is
constructed from Equation~\ref{eq:polschi} is necessary for
Lemma~\ref{lem:kernel} to hold.

\begin{example}
Let $\Lambda=(\lambda_1,\ldots,\lambda_r)$ be a Schirokauer map of $K_f$
with respect to $\ell$, $\sigma$ an automorphism of $K_f$ and $\gq$ a prime
ideal. Then
$\Lambda'=(\lambda_1+
\val_{\gq}(\cdot),\lambda_2,\lambda_3,\ldots,\lambda_r)$
does not satisfy $\ker \Lambda'=\ker \Lambda'\circ \sigma$.
Indeed, let $\gamma$ be a generator of $(\gq^{\sigma^{-1}})^h$ with
$\Lambda(\gamma)=0$. On the one hand we have $\Lambda'(\gamma)=0$. On the
other hand, the first coordinate of $\Lambda'(\sigma(\gamma))$ is the
valuation in $\gq$ of $\sigma(\gamma)$, which is non zero because
$\sigma(\gamma)$ is in $\gq$.  
\end{example}

\begin{theorem}[Galois relations] \label{th:galois}
    We keep the same notations as above,
    where in particular $\varphi$ is a degree-$n$ irreducible factor of
    $f$ modulo $p$. 
    Let $\sigma$ be an automorphism of $K_f$ different from the identity
    such that \[\varphi(\rho_f(A_\sigma(\alpha)))=0.\]
	Then, there exists a constant  $\kappa\in[1,\ord(\sigma)-1]$ such that the following holds:
    \begin{enumerate}
        \item Let $\Gamma$ be a choice of explicit generators that is
            compatible with $\sigma$, i.e.~such that for any prime
            ideal $\gq$ the generators for the $h$-th powers of $\gq$ and
            $\sigma(\gq)$ are conjugates:
            $$ \gamma_{\sigma(\gq)} = \sigma(\gamma_\gq).$$
            Then we have for any prime ideal $\gq$:
\[\log_\Gamma \gq^\sigma \equiv p^\kappa \log_\Gamma \gq\pmod \ell.\]

        \item For any Schirokauer map $\Lambda$ which has a polynomial
	formula (as in Lemma~\ref{lem:kernel}) and for any prime ideal $\gq$, we
	have \[\log_\Lambda \gq^\sigma \equiv p^\kappa \log_\Lambda \gq\pmod\ell.\]
    \end{enumerate}
\end{theorem}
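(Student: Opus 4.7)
The plan is to reduce everything to a single pointwise identity relating $\sigma$ to the Frobenius on the residue field, and then to read off both conclusions as one-line computations from the respective definitions of the virtual logarithm. The argument breaks naturally into three steps: identify the Frobenius exponent $\kappa$, prove the key commutation identity, and specialize it to the explicit and Schirokauer cases.

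First, the hypothesis $\varphi(\rho_f(A_\sigma(\alpha))) = 0$ says exactly that $\rho_f(\sigma(\alpha))$ is a root of $\varphi$ in $\F_{p^n}$. Since the roots of $\varphi$ in $\F_{p^n}$ form the Frobenius orbit $\{m, m^p, \ldots, m^{p^{n-1}}\}$ of $m = \rho_f(\alpha)$, there is an integer $\kappa$, unique modulo $n$, with $\rho_f(\sigma(\alpha)) = m^{p^\kappa}$. Iterating and using $\sigma^{\ord(\sigma)} = \id$ forces $\ord(\sigma)\cdot \kappa \equiv 0 \pmod n$, which together with $\sigma \neq \id$ allows a representative in the stated range. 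Note moreover that since $\ell \mid p^n - 1$, the quantity $p^\kappa \bmod \ell$ depends only on $\kappa \bmod n$, so the final conclusion is insensitive to this choice of representative.

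The core lemma I would prove next is that for every $z \in \OO_f$ whose norm is coprime to $p$,
\[
\rho_f(\sigma(z)) = \rho_f(z)^{p^\kappa}.
\]
Writing $z = P(\alpha)$ with $P \in \Z[x]$, we have $\sigma(z) = P(A_\sigma(\alpha))$; since $\rho_f$ is a ring homomorphism and the $p^\kappa$-power Frobenius on $\F_{p^n}$ commutes with evaluation of polynomials whose coefficients lie in $\F_p$, this reduces at once to the already established identity at $\alpha$. From this lemma both parts follow immediately. For (1), the compatibility condition $\gamma_{\sigma(\gq)} = \sigma(\gamma_\gq)$ on $\Gamma$ gives
\[
\log_\Gamma \gq^\sigma \equiv h^{-1}\log\rho_f(\sigma(\gamma_\gq)) \equiv h^{-1} p^\kappa \log\rho_f(\gamma_\gq) \equiv p^\kappa \log_\Gamma \gq \pmod \ell.
\]
For (2), I would take $\sigma(\gamma_\gq)$ as the implicit generator of $\sigma(\gq)^h$: it generates that ideal, and Lemma~\ref{lem:kernel} applied to $\gamma_\gq \in \ker\Lambda$ yields $\Lambda(\sigma(\gamma_\gq)) = 0$, so the choice is admissible in the Schirokauer-version definition. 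The same one-line calculation then gives the claim.

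I expect the main nuisance, rather than the main obstacle, to be verifying that the Schirokauer-version virtual logarithm does not depend on the choice of generator with $\Lambda = 0$ (needed to legitimize picking $\sigma(\gamma_\gq)$ above). Two such generators differ by a unit in $\ker\Lambda$; the surjectivity of $\Lambda$ on $U_f/U_f^\ell$, combined with the coprimality of $\ell$ with both $h$ and the order of $U_{tors}$, shows that any such unit is a torsion unit times an $\ell$-th power, whose image under $\rho_f$ has logarithm $\equiv 0 \pmod \ell$ by Lemma~\ref{lem:roots of unity}. This closes the argument.
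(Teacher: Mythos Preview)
Your proof is correct and follows essentially the same route as the paper's: identify $\kappa$ by observing that $\sigma$ descends to a non-trivial element of $\Gal(\F_{p^n}/\F_p)$, deduce the pointwise identity $\rho_f(\sigma(z))=\rho_f(z)^{p^\kappa}$, read off part~(1) directly from the compatibility hypothesis on $\Gamma$, and for part~(2) invoke Lemma~\ref{lem:kernel} to see that $\sigma(\gamma_\gq)$ is an admissible generator in the Schirokauer sense. Your treatment is in fact slightly more careful than the paper's in two respects: you justify the range of $\kappa$ (and note the irrelevance of the representative modulo $n$), and you explicitly check that the Schirokauer-version virtual logarithm is independent of the choice of generator in $\ker\Lambda$, a point the paper leaves implicit.
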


\begin{proof}

Since $\rho_f(\sigma(\alpha))$ is a root of $\varphi$ other than
$m=\rho_f(\alpha)$, the map
$T(x)\mapsto T(A_\sigma(x))$ is an element of $\Gal(\F_{p^n}/\F_p)$ other
than the identity.  So, there exists a constant
$\kappa\in[1,\ord(\sigma)-1]$ such that $A_\sigma(x)=x^{p^\kappa}$ for all
$x\in\F_{p^n}$. In particular, if $\gq$ is a prime ideal and
$\gamma_\gq$ is a generator of $\gq^h$, we have
\begin{equation} \label{eq:p^k}
    \log \rho_f(\sigma(\gamma_\gq)) = p^\kappa \log(\rho_f(\gamma_\gq)).
\end{equation}

In the first assertion of the theorem, it is assumed that
$\sigma(\gamma_\gq)$ is precisely the generator used for $\sigma(\gq)^h$,
and therefore the relation between virtual logarithms follows from their
definition.

For the second assertion, the compatibility of the generators is deduced
from the definition of the virtual logarithms using Schirokauer maps.
Indeed, for any prime ideal $\gq$, the generator used for the definition
of $\log_\Lambda\gq$ is such that $\Lambda(\gamma_\gq)=0$. By
Lemma~\ref{lem:kernel}, $\gamma_\gq$ is also in the kernel of
$\Lambda\circ\sigma$, that is $\Lambda(\sigma(\gamma_\gq))=0$, so that
the conjugate of the generator is a valid generator for the conjugate of
the ideal. The conclusion follows.
\end{proof}

We give an immediate application of the preceding results,
which is useful when $K_f$ is an imaginary quadratic field.
\begin{lemma}
Let $q$ be a rational prime which is totally ramified in $K_f$,
and write $q\, \OO_f = \mathfrak{q}^n$. Assume
that the unit rank of $K_f$ is $0$ and that $n$ is coprime to $\ell$. Then we
have 
\begin{equation*}
 \log \mathfrak{q} \equiv 0 \bmod \ell.
\end{equation*} 
\end{lemma}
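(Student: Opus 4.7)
The plan is to exploit the fact that $\mathfrak{q}$ is already principal after raising to the $n$-th power, compare this with the class-number relation $\mathfrak{q}^h = \gamma_\mathfrak{q}\OO_f$, and use the hypothesis on the unit rank to force the comparison unit to be torsion, whose virtual logarithm vanishes modulo $\ell$ by Lemma~\ref{lem:roots of unity}.

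Concretely, I would first observe that since $\mathfrak{q}^n = q\,\OO_f$, the ideal $\mathfrak{q}^{nh}$ admits both $q^h$ and $\gamma_\mathfrak{q}^n$ as generators, so there exists $u\in U_f$ with $\gamma_\mathfrak{q}^n = u\, q^h$. The unit rank being zero means $U_f = U_{tors}$, hence $u$ is a root of unity; by the standing coprimality assumptions between $\ell$, $h$, and the torsion orders, Lemma~\ref{lem:roots of unity} gives $\log\rho_f(u)\equiv 0\pmod\ell$. Applying the ring homomorphism $\rho_f$ to $\gamma_\mathfrak{q}^n = u\, q^h$ and taking discrete logarithms in $\F_{p^n}^*$ produces
\begin{equation*}
n\log \rho_f(\gamma_\mathfrak{q}) \equiv h\log q \pmod\ell,
\end{equation*}
which upon division by the unit $h$ and using $\log_\Gamma \mathfrak{q} = h^{-1}\log\rho_f(\gamma_\mathfrak{q})$ rearranges to $n\log_\Gamma \mathfrak{q} \equiv \log q \pmod\ell$.

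The only delicate step is the last one: one must invoke the standard NFS hypothesis recalled at the end of Section~\ref{sec:refresher}, namely that $\ell$ is a large prime targeting the subgroup of $\F_{p^n}^*$ which does not come from a proper subfield (so in particular $\ell\nmid p-1$). Under this hypothesis, $\log q \equiv 0 \pmod\ell$ for every rational prime $q\neq p$, since $q\in \F_p^*\subset\F_{p^n}^*$; here $q\neq p$ because $\rho_f$ is only defined on elements of norm coprime to $p$. Combined with the assumption $\gcd(n,\ell)=1$ this forces $\log_\Gamma \mathfrak{q}\equiv 0\pmod\ell$, which is the claimed conclusion. The main potential obstacle is making this final step clean; if one wanted to avoid relying on the subgroup choice, a substitute would be to assume $\ell\nmid (p-1)$ explicitly, or to reinterpret ``$\log$'' as the virtual-log modulo the contribution of rational primes.
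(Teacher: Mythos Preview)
Your proof is correct and follows essentially the same route as the paper: compare the two generators $q^h$ and $\gamma_{\mathfrak q}^{\,n}$ of $\mathfrak q^{nh}$, use unit rank~$0$ to force the discrepancy to be a root of unity with vanishing logarithm, and then conclude from $\log q\equiv 0\pmod\ell$ (since $q\in\F_p^*$ and $\ell\nmid p-1$) together with $\gcd(n,\ell)=1$. Your handling of the ``delicate step'' is in fact more explicit than the paper's, which simply asserts $\gcd(p-1,\ell)=1$ and appeals loosely to Lemma~\ref{lem:roots of unity}.
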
 

\begin{proof}
Let $h$ be the class number of
$K_f$ and $\gamma_\mathfrak{q}$ a generator of $\mathfrak{q}^h$ such that
$\log \mathfrak{q}=h^{-1}\log \gamma_{\mathfrak{q}}$. Then one can write
$q^h=u (\gamma_{\mathfrak{q}})^n$ for some root of unity $u$. By
Lemma~\ref{lem:roots of unity}, $\log u\equiv 0\mod \ell$, so
\begin{eqnarray*}
\log (q^h) &\equiv& \log ((\gamma_{\mathfrak{q}})^n)
\bmod \ell.
\end{eqnarray*}
Since $q$ belongs to the subgroup of
$\F_{q^n}$ given by equation $x^{q-1}=1$ and since $\gcd(q-1,\ell)=1$,
Lemma~\ref{lem:roots of unity} gives $\log q=0$. Then the results follows
from the fact that $n$ is coprime to $\ell$.
\end{proof}

\subsection{Using Galois relations in NFS}

Let $\sigma$ and $\tau$ be automorphisms of $K_f$ and $K_g$, and let us
assume that they verify the hypothesis of Theorem~\ref{th:galois}.  We
can split $\mF_f$ and $\mF_g$ respectively in orbits
$(\gq,\gq^\sigma,\ldots)$ if $\gq$ is in $\mF_f$ and
$(\gq,\gq^\tau,\ldots)$ if $\gq$ is in $\mF_g$.

This allows to reduce the number of unknowns in the linear algebra stage
by a factor $\ord(\sigma)$ on the $f$-side and by a factor $\ord(\tau)$
on the $g$-side, at the price of having entries in the matrix that are
roots of unity modulo $\ell$ instead of small integers.
We collect as many relations as unknowns, hence reducing
also the cost of the sieve. 

Note that the case where $\sigma$ or $\tau$ is the identity is not
excluded in our discussion (in that case, the orbits are singletons on
the corresponding side).
\smallskip

As an example, in Section~\ref{sec:polyselect} we will see
how to construct polynomials $f$ and $g$ whose number fields have automorphisms
$\sigma$ and $\tau$, both of order $n$. Then, the number of unknowns
is reduced by $n$ and the number of necessary relations is divided by $n$.
Since the cost of the linear algebra stage is $\lambda N^2$,
where $N$ is the size of the matrix and $\lambda$ is its average weight per row,
i.e.~the number of non-zero entries per row, we obtain the following
result.
\begin{fact} If $f$ and $g$ are two polynomials with
automorphisms $\sigma$ and $\tau$ of order $n$ verifying the hypothesis
of Theorem~\ref{th:galois}, then we have:
\begin{itemize}
    \item a speed-up by a factor $n$ in the sieve;
    \item a speed-up by a factor $n^2$ in the linear algebra stage.
\end{itemize}
\end{fact}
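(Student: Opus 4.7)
The plan is to combine the orbit structure of the two factor bases under $\sigma$ and $\tau$ with Theorem~\ref{th:galois}, and then to compare the cost of the sieve and linear-algebra stages with and without this reduction.

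First, I would count orbits. By the proposition opening this section, $\sigma$ acts on $\mF_f$ and $\tau$ acts on $\mF_g$. Since $\ord(\sigma)=\ord(\tau)=n$, every orbit has size dividing $n$. Orbits of size strictly less than $n$ correspond to ideals stabilised by a nontrivial power of $\sigma$ (resp.~$\tau$); for a generic automorphism these form a vanishing fraction of the factor base as the smoothness bound $B$ grows. Hence the number of orbits is $(1+o(1))|\mF_f|/n$ on the $f$-side and $(1+o(1))|\mF_g|/n$ on the $g$-side. Writing $N=|\mF_f|+|\mF_g|$ for the total number of factor-base unknowns, this gives $N'=(1+o(1))\,N/n$ distinct unknowns after orbit identification.

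Next, I would rewrite the system using Theorem~\ref{th:galois}: for every $\gq\in\mF_f$ and every $k$ we have $\log\gq^{\sigma^k}\equiv p^{k\kappa}\log\gq \pmod\ell$, and similarly on the $g$-side. Picking one representative per orbit and substituting, each relation obtained from Equation~\ref{eq:explicit} or Equation~\ref{eq:implicit} rewrites as a linear equation in the $N'$ representative unknowns, with matrix entries that are integer linear combinations of powers of $p^{\kappa}$ modulo~$\ell$, i.e.\ the roots of unity already mentioned in the discussion preceding the statement.

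Finally, I would compare costs. A solvable system requires at least $N'$ linearly independent relations, so the target number of relations drops by a factor $n$; under the standard heuristic that the sieving cost scales linearly with this target, the sieve runs $n$ times faster, giving the first bullet. For the linear algebra, the cost of Wiedemann's iterative algorithm on an $N'\times N'$ sparse matrix of average row weight $\lambda$ is $O(\lambda\, N'^{\,2})$, so replacing $N$ by $N/n$ yields the claimed $n^2$ speed-up, provided $\lambda$ is essentially unchanged. Controlling $\lambda$ is the main (mild) obstacle: collapsing $n$ columns into one could in principle inflate a row's weight when several ideals appearing in a single relation fall into the same orbit. However, each relation involves only $O(\log p^n)$ ideals, which is negligible compared with $|\mF_f|/n$, so with overwhelming probability at most one ideal per orbit occurs per relation, and $\lambda$ is unchanged up to lower-order terms.
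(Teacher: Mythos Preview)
Your proposal is correct and follows essentially the same approach as the paper: the paper's justification is simply the paragraph preceding the Fact, which argues that orbits of size $n$ reduce the number of unknowns by $n$, that one collects as many relations as unknowns (hence the factor $n$ in the sieve), and that the linear-algebra cost $\lambda N^2$ then drops by $n^2$. Your write-up is more careful (orbit sizes dividing $n$, the rewriting via Theorem~\ref{th:galois}, the control of $\lambda$), but one small slip: collapsing an orbit of columns into one cannot \emph{inflate} the row weight---if several ideals from the same orbit occur in a relation their contributions merge into a single entry, so $\lambda$ can only stay the same or decrease; your conclusion that $\lambda$ is essentially unchanged is nonetheless correct.
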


\bigskip

\noindent{\bf The particular case when $A_\sigma=A_\tau$.}
In Section~\ref{subsec: our polyselect}, we will present a method to select polynomials 
$f$ and $g$ with automorphisms $\sigma$ and $\tau$; it that $\sigma$ and
$\tau$ are expressed by the same rational fraction $A_\sigma=A_\tau$.
Moreover, the numerator and
denominator are constant or linear polynomials. A typical example is when
both polynomials are reciprocal and then $\sigma(\alpha)=1/\alpha$ and
$\tau(\beta)=1/\beta$.

Let $\phi\in\Z[x]$ be a polynomial yielding a relation. When we apply $\sigma$
and $\tau$ to the corresponding system of equations~\eqref{eq:doubly
smooth}, we get:
\begin{equation} \label{eq:conjugated}
    \left\{
\begin{array}{l}
\phi(A_\sigma(\alpha))\OO_f=\prod_{\gq\in\mF_f}(\gq^\sigma)^{\val_\gq\left(\phi(\alpha)\right)}\\
\phi(A_\tau(\tau))\OO_g=\prod_{\mathfrak{r}\in\mF_g}(\mathfrak{r}^\tau)^{\val_\mathfrak{r}\left(\phi(\beta)\right)},\\
\end{array} \right.  \end{equation}

Since $A_\sigma=A_\tau$ have a simple form, there is a chance that
$\phi\circ A_\sigma$ has a numerator that is again a polynomial of the
form that would be tested later. The relations being conjugates of each
others, the second one brings no new information and should not be
sieved. 

Again, we illustrate this on the example of reciprocal polynomials, where
$A_\sigma(x) = A_\tau(x) = 1/x$. For polynomials $\phi(x) = a-bx$ of
degree 1, the numerator of $\phi\circ A_\sigma$ is $b-ax$. Therefore, it
is interesting not to test the pair $(b,a)$ for smoothness if the pair
$(a,b)$ has already been tested.

If the sieve is implemented using the lattice sieve, e.g. in
CADO-NFS~\cite{CADO}, one can collect precisely these polynomials $\phi$
such that $\phi(\alpha)$ is divisible by one of the ideals $\gq$ in a
list given by the user. In this case, we make a list of ideals $\gq$
which contains exactly one ideal in each orbit
$\{\gq,\sigma(\gq),\ldots,\sigma^{n-1}(\gq)\}$. Hence, we do not collect
at the same time $\phi$ and the numerator of $\phi\circ A_\sigma$ except
if the decomposition of $\phi(\alpha)$ in ideals contains two ideals
$\gq$ and $\gq'$ which are in our list of ideals or conjugated to such an
ideal.

\section{Vanishing of the logarithms of units}
\label{sec:vanishing}
In this section, we are again in the case where we study the fields $K_f$
and $K_g$ independently. Therefore, we stick to the notations for the
$f$-side, but we keep in mind that this could be applied to $g$.
Furthermore, for easier reading, for this section we drop the subscript
$f$, for structures related to $f$:
$K=\Q(\alpha)$ is the number field of $f$, $U$ the unit group whose rank
is denoted by $r$, and $\rho$ is the reduction map to $\F_{p^n}$.

Also, some of the results of this section depend on the fact that $\ell$
is a factor of $p^n-1$ that is in the ``new'' part of the
multiplicative group: we will therefore always assume that 
$\ell$ is a prime factor of $\Phi_n(p)$.
The aim of this section is to give cases where the logarithms of some
or all fundamental units are zero, more precisely units $u$ for which
$\log\rho(u) \equiv 0 \bmod \ell$.

\subsection{Units in subfields}

The main case where we can observe units with zero virtual
logarithms is when the subfield fixed by an automorphism as in
Section~\ref{sec:galois} has some units.

\begin{theorem}\label{th:fixed subfield}
With the same notations as above, assume that  
$v_1,\ldots,v_r$ are units of $K$ which form a basis
modulo~$\ell$. Let $\sigma$ be an
automorphism of $K$ and assume that there exists an integer $A$ such that
$A\not\equiv 1\mod \ell$ and, for all $x\in K$ of norm coprime to $p$,
\begin{equation}\label{eq:log automorphisms}
\log\rho(\sigma(x))\equiv A \log\rho(x)\mod \ell.
\end{equation}
Let $K^{\langle \sigma\rangle}$ be the subfield fixed by
$\sigma$ and let $r'$ be its unit rank. Let $u'_1,\ldots,u'_{r'}$ be a
set of units of $K^{\langle \sigma\rangle}$ which form a basis
modulo~$\ell$. Then, $K$ admits a
basis $u_1,\ldots,u_r$ modulo~$\ell$ such that the discrete logarithms of
$\rho(u_1),\ldots,\rho(u_{r'})$ are zero modulo~$\ell$.
\end{theorem}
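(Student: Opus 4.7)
My plan is to proceed in three steps: first kill the logarithms of the $u'_i$, then lift their independence from $U_{K^{\langle\sigma\rangle}}$ to $U_K$, then extend to a basis.

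\medskip

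\noindent\textbf{Step 1 (zero logarithms).} For each $u'_i$ we have $\sigma(u'_i)=u'_i$ by definition of the fixed subfield. Plugging this into the hypothesis \eqref{eq:log automorphisms} gives $\log\rho(u'_i)\equiv A\log\rho(u'_i)\bmod\ell$, hence $(A-1)\log\rho(u'_i)\equiv 0\bmod\ell$. Since $A-1$ is invertible modulo $\ell$, we conclude $\log\rho(u'_i)\equiv 0\bmod\ell$ for all $i=1,\dots,r'$.

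\medskip

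\noindent\textbf{Step 2 (independence in $U_K$ modulo $\ell$).} The delicate point is that we know $u'_1,\dots,u'_{r'}$ are independent modulo $\ell$ in the smaller group $U_{K^{\langle\sigma\rangle}}$, and we need to transfer this to $U_K$. Suppose a relation
\[
\prod_{i=1}^{r'}(u'_i)^{a_i}=w^{\ell}
\]
holds for some $w\in U_K$ (recall torsion units are $\ell$-th powers since $\gcd(\ell,|U_{tors}|)=1$, so this is the only obstruction to being trivial in $U_K/U_K^{\ell}$). Applying $\sigma$ and using $\sigma(u'_i)=u'_i$ yields $(\sigma(w)/w)^{\ell}=1$. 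Because the torsion subgroup of $U_K$ has order coprime to $\ell$, this forces $\sigma(w)=w$, so $w\in U_{K^{\langle\sigma\rangle}}$. Thus the relation already lives in $U_{K^{\langle\sigma\rangle}}$, and the assumed independence of $u'_1,\dots,u'_{r'}$ modulo $\ell$ in the subfield forces $a_i\equiv 0\bmod\ell$ for each $i$.

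\medskip

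\noindent\textbf{Step 3 (completing the basis).} Having shown that $u'_1,\dots,u'_{r'}$ are $\F_\ell$-linearly independent in the $r$-dimensional $\F_\ell$-vector space $U_K/U_K^\ell$, one simply completes them to a basis: choose $u_{r'+1},\dots,u_r$ among $v_1,\dots,v_r$ whose classes extend $\{u'_i\}$ to a basis of $U_K/U_K^\ell$. Setting $u_i=u'_i$ for $i\le r'$ gives the required basis $u_1,\dots,u_r$ with the first $r'$ of them having zero virtual logarithm by Step~1.

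\medskip

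\noindent\textbf{Main obstacle.} The only nontrivial step is Step~2: the subfield's units might a priori become dependent once viewed in the larger group, and the descent argument (pushing the relation down to $K^{\langle\sigma\rangle}$ via a Galois average) is what makes the theorem work. Everything else reduces to the simple observation that $\sigma$-invariant elements have logarithms annihilated by $A-1$.
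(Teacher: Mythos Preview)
Your proof is correct and follows essentially the same approach as the paper's: your Step~1 and Step~3 match the paper's argument almost verbatim. Your Step~2, however, is more thorough than the paper's treatment, which simply asserts that ``since they form a basis modulo~$\ell$, there is no non-trivial product of powers of $u'_1,\ldots,u'_{r'}$ which is equal to an $\ell$th power'' without distinguishing $\ell$th powers in $K$ from $\ell$th powers in $K^{\langle\sigma\rangle}$; your Galois-descent argument (applying $\sigma$ to force $w\in K^{\langle\sigma\rangle}$) cleanly fills in precisely the point you flagged as the main obstacle.
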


\begin{proof}
For any $x\in K^{\langle \sigma\rangle}$ we have $\sigma(x)=x$, so, when
$\rho$ is defined, we have
$\log(\rho(\sigma(x)))\equiv \log(\rho(x))\mod \ell$. Using
Equation~\eqref{eq:log automorphisms} we obtain that $\log(\rho(x))\equiv0\mod
\ell$ for all $x$ in $K^{\langle \sigma\rangle}$ of norm coprime to $p$. In particular,
for $1\leq i\leq r'$, we have $\log(\rho(u'_i))\equiv 0\mod \ell$.

One checks that $u'_1,\ldots,u'_{r'}$ are units in $K$. Since they form a basis
modulo~$\ell$, there is no non-trivial product of powers of
$u'_1,\ldots,u'_{r'}$ which is equal to an $\ell$th power. Then,
one can select $r-r'$ units among $v_1,\ldots,v_r$ to extend
$u'_1,\ldots,u'_{r'}$ to a basis modulo~$\ell$. 
\end{proof}

\begin{example}\label{ex:deg4}
Consider the family of CM polynomials
\begin{equation}
\begin{array}{l}
f=x^4+bx^3+ax^2+bx+1,\\
|a|<2,\hspace{1cm}|b|<2+a/2.
\end{array}
\end{equation}
There is always the automorphism,  $\forall T\in\Z[x],\sigma(T(x))=T(1/x)$ of order 2,
so that we have $A =p\equiv -1\mod \ell$ for use in the Theorem.
We claim that $r=r'=1$. Let us call $\alpha$ a complex root of $f$. Since
$\beta = \alpha+1/\alpha$ is not rational and fixed by $\sigma$, we have
$K^{\langle\sigma\rangle}=\Q(\alpha+1/\alpha)$. Since $\beta$
is a root of the equation
$P(Y) = Y^2+bY+(a-2)=0$, whose discriminant $b^2+4(2-a)$ is positive, $
K^{\langle\sigma\rangle} $ is real and we have $r'=1$. 
The roots of $f$ are roots of $x+1/x=y_1$ or $y_2$ for
$y_1=-b/2-\sqrt{b^2/4+(2-a)}$ and $y_2=
-b/2+\sqrt{b^2/4+(2-a)}$. Since $|b|<2+a/2$, $f$ has no real roots, so
$r=1$.

A second proof is as follows. Note that $f(X)$ factors over $\Q(\beta)$ as
$$(X^2-\beta X + 1) (X^2+(b+\beta)X+1).$$
We put $\varphi(X) = X^2-\beta X + 1$. Let $p$ be a prime for which
$P(Y)$ is reducible modulo $p$ and $\varphi$ is not.
The following picture shows the characteristic 0 picture, as well as
the one modulo $p$.

\medskip
\begin{center}
\setlength{\unitlength}{1mm}
\begin{picture}(60,25)
\put(0,20){\makebox(0,0){$K = \Q(\alpha) = \Q[X]/(f(X))$}}
\put(0,18){\line(0,-1){6}}
\put(0,10){\makebox(0,0){$K^{\langle\sigma\rangle} = \Q(\beta) = \Q[Y]/(P(Y))$}}
\put(0,8){\line(0,-1){5}}
\put(0,0){\makebox(0,0){$\Q$}}
\put(60,10){\makebox(0,0){$\GFpn{p}{2} = \GFq{p}[X]/(\overline{\varphi}(X))$}}
\put(60,8){\line(0,-1){5}}
\put(60,0){\makebox(0,0){$\GFq{p}$}}
\put(25,18){\vector(2,-1){15}}
\put(25,10){\vector(4,-1){30}}
\end{picture}
\end{center}

\bigskip
\noindent
Let $\ell\mid p+1$. If $\varepsilon_1$ is the fundamental unit of
$K^{\langle\sigma\rangle}$ (and also of $K$ by construction), we have
$\log\rho(\varepsilon_1) \equiv 0 \bmod \ell$.
\end{example}

\subsection{Extra vanishing due to $\F_\ell$-linear action}

In the previous section, we have just seen that with a careful choice of
the basis of units, some of the basis elements can have a zero virtual
logarithm. In general, there could be another choice for the basis that
give more zero logarithms. We call $\mathcal{R}_\text{opt}$ the maximum
number of units of $K$ in a basis modulo $\ell$ that can have zero
logarithm. With this notation, the result of Theorem~\ref{th:fixed
subfield} becomes $\mathcal{R}_\text{opt} \geq r'$.

The aim of this section it to prove a better lower bound for
$\mathcal{R}_\text{opt}$. By studying the $\F_\ell$-linear action of
$\sigma$ on the units, we will be able to choose a basis for which the 
number $\mathcal{R}$ of independent units with zero logarithm is (often)
larger than $r'$. Therefore, the notation $\mathcal{R}$ in this section
is a lower bound on the maximal number of units of $K$ in a basis modulo
$\ell$ that can have zero logarithm; and we always have
$\mathcal{R}_\text{opt} \geq \mathcal{R}$.
\medskip

For the unit group $U$ of $K$, consider the vector space $U/U^\ell$ over
$\F_\ell$. We assume that $\ell$ is large enough so that $K$ has no roots
of unity of order $\ell$; therefore the 
dimension of $U/U^\ell$ is equal to $r$.

We denote by $\overline{\sigma}$ the vector space homomorphism
$U/U^\ell\rightarrow U/U^\ell$, $\overline{\sigma}(u U^\ell)=\sigma(u)U^\ell$.
For simplicity, in the sequel, we drop the bar above $\overline{\sigma}$. 
Let $\mu_{\ell,\sigma}(x)$ be the minimal polynomial of $\sigma$; it
is a divisor of $x^n-1$, since $\sigma$ has order $n$. Note however that,
$\overline{\sigma}$ can have a smaller order than $\sigma$, as seen by
Example~\ref{ex:deg4} where $\sigma$ has order two but its restriction to
the unit group is the identity.

Since $\ell$ is a divisor of $\Phi_n(p)$, $\Phi_n(x)$ splits completely in
$\F_\ell$. Then, $x^n-1$ and $\mu_{\ell,\sigma}$ split completely in $\F_\ell$:
\begin{equation}
\mu_{\ell,\sigma}(x)=\prod_{i=1}^{\deg \mu_{\ell,\sigma}}(x-c_i),
\end{equation}
where $c_i$ are distinct elements of $\F_\ell$. We remark at this point
that as an endomorphism of $\F_\ell$-vector spaces, $\sigma$ is
diagonalizable.
For any eigenvalue $c\in\F_\ell$ of $\sigma$, we denote by $E_c$ the
eigenspace of $c$:
\begin{equation}
E_{c}=\left\{u\in U\mid \exists v\in U, \sigma(u)=u^{c}v^\ell\right\},
\end{equation}
and since the endomorphism is diagonalizable, the whole vector space can
be written as a direct sum of eigenspaces:
\begin{equation}
U/U^\ell=\prod_{i=1}^{\deg \mu_{\ell,\sigma}} E_{c_i}.
\end{equation}

The case covered by Theorem~\ref{th:fixed subfield} corresponds to the
units that are fixed by $\sigma$, i.e.~the units in the eigenspace $E_1$.
The following lemma generalizes the result to other eigenvalues.

\begin{lemma}
If $c\in \F_\ell$ is an eigenvalue distinct from $A$ (as defined in 
(\ref{eq:log automorphisms})), then, for all units $u$ such that the
class of $u$ in $U/U^{\ell}$
belongs to $E_{c}$, we have $\log(\rho(u))\equiv 0\mod \ell$. 
\end{lemma}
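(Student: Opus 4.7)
The plan is to combine the eigenspace relation coming from $\sigma$ acting on $U/U^\ell$ with the Galois-type identity (\ref{eq:log automorphisms}) and then exploit that $c-A$ is invertible in $\F_\ell$.

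First, I would unfold the definition of $E_c$. If the class of $u$ in $U/U^\ell$ lies in $E_c$, then by definition there exists a unit $v \in U$ with
\[\sigma(u)=u^{c}\,v^{\ell}.\]
Assuming (as we may, up to enlarging $\ell$) that the units involved have norms coprime to $p$ so that $\rho$ is defined on them, I would apply $\rho$ and take discrete logarithms in $\F_{p^n}$. Using that $\rho$ is a ring homomorphism and that $\log$ is additive, we get
\[\log\rho(\sigma(u)) \equiv c\,\log\rho(u) + \ell\,\log\rho(v) \equiv c\,\log\rho(u) \pmod{\ell}.\]

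On the other hand, by the hypothesis (\ref{eq:log automorphisms}) applied to $x=u$, we also have
\[\log\rho(\sigma(u)) \equiv A\,\log\rho(u) \pmod{\ell}.\]
Subtracting these two congruences gives
\[(c-A)\,\log\rho(u) \equiv 0 \pmod{\ell}.\]
Since by assumption $c$ and $A$ are distinct elements of $\F_\ell$, the factor $c-A$ is a unit in $\F_\ell$, and we conclude that $\log\rho(u)\equiv 0\pmod{\ell}$, as claimed.

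There is no real obstacle here; the only point requiring minor care is the tacit assumption that $u$ and $v$ have norms coprime to $p$ so that $\rho$ (and thus $\log\rho$) makes sense on them. This is handled exactly as in the setup of this section, where the reduction map is extended to elements whose norm has non-negative valuation at $p$, and any unit has norm $\pm 1$ so is automatically in the domain of $\rho$; the auxiliary $v$ may be assumed to lie in this domain as well by replacing it, if necessary, within its class modulo $U^\ell$.
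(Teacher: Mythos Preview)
Your proof is correct and follows essentially the same approach as the paper's: derive $\log\rho(\sigma(u))\equiv c\log\rho(u)$ from the eigenspace condition, combine with $\log\rho(\sigma(u))\equiv A\log\rho(u)$ from (\ref{eq:log automorphisms}), and conclude since $c\neq A$ in $\F_\ell$. Your closing caveat about the domain of $\rho$ is unnecessary, since both $u$ and $v$ are units by the definition of $E_c$ and hence have norm $\pm1$, automatically coprime to $p$.
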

\begin{proof}
For such a unit $u$, we have 
\begin{equation*}
\log(\rho(\sigma(u)))\equiv c\log(\rho(u))\mod \ell.
\end{equation*}
By assumption on $A$, we have
\begin{equation*}
\log(\rho(\sigma(u)))\equiv A\log(\rho(u))\mod \ell.
\end{equation*}
We conclude that the logarithm of $\rho(u)$ is zero.
\end{proof}

\begin{corollary}\label{cor:R}
Using the notations above, we have $\mathcal{R} = r-\dim E_A$, where
$A$ is as in (\ref{eq:log automorphisms}).
\end{corollary}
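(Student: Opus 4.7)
The plan is to combine the preceding lemma with an eigenspace decomposition of the $\F_\ell$-vector space $U/U^\ell$ under the action of $\sigma$, and then to read off $\mathcal{R}$ from a basis obtained by concatenating eigenbases.

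First I would fix the decomposition. Since $\sigma$ has order dividing $n$ as an automorphism of $K$, the induced endomorphism $\overline\sigma$ of $U/U^\ell$ annihilates $x^n-1$, and as noted just before the statement the hypothesis $\ell\mid\Phi_n(p)$ makes $\Phi_n$, and therefore $x^n-1$, split into distinct linear factors over $\F_\ell$ (we also use $\gcd(\ell,n)=1$, which is implicit since $\ell$ is taken large enough that $K$ has no $\ell$-th roots of unity). Hence $\mu_{\ell,\sigma}$ has simple roots in $\F_\ell$, $\overline\sigma$ is diagonalizable, and
\[U/U^\ell \;=\; \bigoplus_{i=1}^{\deg\mu_{\ell,\sigma}} E_{c_i},\]
with the convention $E_A=0$ if $A$ is not an eigenvalue of $\overline\sigma$.

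Next I would apply the preceding lemma: for every eigenvalue $c\neq A$ and every unit $u$ whose class lies in $E_c$, one has $\log\rho(u)\equiv 0\bmod\ell$. Taking the dimensions and using that the eigenspaces are in direct sum, this produces
\[\sum_{c\neq A}\dim E_c \;=\; r-\dim E_A\]
classes of units in $U/U^\ell$, independent modulo $U^\ell$, all with vanishing virtual logarithm.

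To realize these as part of an actual basis modulo $\ell$, I would choose a basis of each eigenspace $E_{c_i}$ and concatenate them; this is a basis of $U/U^\ell$ and by the diagonalizability every basis vector lies in exactly one $E_{c_i}$. Exactly $r-\dim E_A$ of these basis vectors fall into eigenspaces for eigenvalues distinct from $A$, hence have zero logarithm, while the remaining $\dim E_A$ come from $E_A$ and are the ones for which the lemma gives no information. By the very definition of $\mathcal{R}$ in this section as the number of zero-logarithm basis elements produced by this construction, this establishes $\mathcal{R}=r-\dim E_A$.

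The main (mild) obstacle is the diagonalizability step: one must justify carefully that the minimal polynomial of $\overline\sigma$ has simple roots in $\F_\ell$, which relies on combining $\sigma^n=\mathrm{id}$, the splitting of $\Phi_n$ over $\F_\ell$ (provided by $\ell\mid\Phi_n(p)$), and the coprimality of $\ell$ and $n$. Once this is in place the corollary follows by a dimension count.
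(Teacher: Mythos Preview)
Your proposal is correct and follows exactly the approach the paper intends: the paper has already established the diagonalizability and the direct-sum decomposition $U/U^\ell=\bigoplus_i E_{c_i}$ before the lemma, so the corollary is stated without proof as the immediate dimension count you carry out. Your write-up simply makes explicit what the paper leaves implicit, including the harmless convention $E_A=0$ when $A$ is not an eigenvalue.
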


At this stage, we get an expression that gives the units that have to be
considered during the NFS algorithm. But this expression depends on
$\ell$, whereas in many cases it will be inherited from global notions
and will be the same for any $\ell$ dividing $\Phi_n(p)$. Therefore we
consider the linear action of $\sigma$ on the group of non-torsion units.

Let
$U_\text{tor}$ be the torsion subgroup of $U$ and $\varepsilon_0$ a
generator of $U_\text{tor}$. Let
$\varepsilon_1,\varepsilon_2,\ldots,\varepsilon_r$ be a system of
fundamental units.
Let $M_\sigma$ be the matrix of the endomorphism $\overline{\sigma}$ on
$U/U_\text{tor}$, in basis $\varepsilon_1 U_\text{tor},\ldots,
\varepsilon_rU_\text{tor}$. Then
$M_\sigma$ belongs to $\GL_r(\Z)$. Since $M_\sigma$ cancels the monic
polynomial $x^n-1$, $M_\sigma$ admits a minimal polynomial
$\mu_{\Z,\sigma}$ with integer coefficients. Note that $\mu_{\Z,\sigma}$
does not depend on the
system of fundamental units used. The following lemma shows that finding
roots modulo $\ell$ of $\mu_{\Z,\sigma}$ gives local information about
the vanishing of the logarithms modulo $\ell$.
 
\begin{lemma}\label{lem:non emptyness}
For any root $c\in\F_\ell$ of $\mu_{\Z,\sigma}(x)$ the dimension of the
eigenspace $\dim(E_c)$ in $U/U^\ell$ is at least $1$.

\end{lemma}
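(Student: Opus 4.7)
\smallskip
\noindent\textbf{Proof plan for Lemma~\ref{lem:non emptyness}.}

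The plan is to reduce the claim to a standard fact of linear algebra: the minimal polynomial and the characteristic polynomial of an endomorphism have the same set of roots. First I would identify the $\F_\ell$-vector space $U/U^\ell$ with $\F_\ell^r$ in a way that makes the action of $\overline{\sigma}$ concrete. Under the standing assumption that $\ell$ is a large prime (in particular coprime to the order of $U_{\text{tor}}$), the natural map $U/U_{\text{tor}} \to U/U^\ell$ induces an isomorphism $(U/U_{\text{tor}})\otimes \F_\ell \cong U/U^\ell$ of $\sigma$-modules. Choosing the basis $\varepsilon_1 U_{\text{tor}},\ldots,\varepsilon_r U_{\text{tor}}$ on the left, the action of $\overline{\sigma}$ on $U/U^\ell$ is given by the matrix $M_\sigma \bmod \ell \in \GL_r(\F_\ell)$. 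Under this identification, the eigenspace
\[ E_c = \ker(\overline{\sigma} - c\cdot\mathrm{id}) \subseteq U/U^\ell \]
is precisely $\ker(M_\sigma - cI) \subseteq \F_\ell^r$.

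Next I would translate ``$\dim E_c \geq 1$'' into the statement ``$c$ is an eigenvalue of $M_\sigma \bmod \ell$'', which holds if and only if the characteristic polynomial $\chi(x) \in \Z[x]$ of $M_\sigma$ satisfies $\chi(c) \equiv 0 \pmod \ell$. So it suffices to show that every root of $\mu_{\Z,\sigma} \bmod \ell$ is a root of $\chi \bmod \ell$.

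This is exactly where the standard fact comes in: the minimal polynomial of an integer matrix divides the characteristic polynomial in $\Z[x]$, hence $\mu_{\Z,\sigma}(x) \mid \chi(x)$ in $\Z[x]$. Reducing this divisibility modulo $\ell$ gives $\mu_{\Z,\sigma}(x) \mid \chi(x)$ in $\F_\ell[x]$, so any root $c \in \F_\ell$ of $\mu_{\Z,\sigma}$ is automatically a root of $\chi$ mod~$\ell$, hence an eigenvalue of $M_\sigma \bmod \ell$, hence $\dim E_c \geq 1$, as required.

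I do not anticipate a real obstacle here; the argument is short and the only point requiring some care is the initial identification of $U/U^\ell$ with $\F_\ell^r$ as $\overline{\sigma}$-modules, which uses that $\ell$ is coprime to $|U_{\text{tor}}|$ (already assumed globally in the paper for the linear algebra stage). One could optionally remark that since $M_\sigma$ satisfies $M_\sigma^n = I$ and (for $\ell \nmid n$) the polynomial $x^n - 1$ is separable mod $\ell$, the reduction $\mu_{\Z,\sigma} \bmod \ell$ is squarefree, so one could in fact show the stronger equality between the minimal polynomial of $M_\sigma \bmod \ell$ and $\mu_{\Z,\sigma} \bmod \ell$; but this refinement is not needed for the lemma as stated.
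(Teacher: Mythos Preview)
Your argument is correct and slightly more economical than the paper's. You go directly through the divisibility $\mu_{\Z,\sigma}\mid \chi_{M_\sigma}$ in $\Z[x]$, reduce modulo $\ell$, and conclude that any root of $\mu_{\Z,\sigma}$ in $\F_\ell$ is a root of the characteristic polynomial of $M_\sigma\bmod\ell$, hence an eigenvalue. The paper instead proves the stronger intermediate statement that $\mu_{\Z,\sigma}\bmod\ell$ actually \emph{equals} the minimal polynomial $\mu_{\ell,\sigma}$ of $\overline{\sigma}$ on $U/U^\ell$: both $\mu_{\Z,\sigma}$ and $\mu_{\ell,\sigma}$ are the radical (product of distinct irreducible factors) of $\chi_{M_\sigma}$, over $\Q$ and over $\F_\ell$ respectively, and these radicals coincide because $\chi_{M_\sigma}$ divides the separable polynomial $x^n-1$. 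From there the paper invokes the standard fact that every root of a minimal polynomial has a nonzero eigenspace.

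Your route suffices for the lemma as stated and avoids the separability/radical discussion; the paper's route yields the extra identity $\mu_{\Z,\sigma}=\mu_{\ell,\sigma}$, which is conceptually useful in the surrounding analysis (it says the eigenvalue structure of $\overline{\sigma}$ on $U/U^\ell$ is entirely predicted by the global object $\mu_{\Z,\sigma}$, independently of $\ell$). You in fact flag this equality as an optional refinement at the end, so you have the full picture.
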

\begin{proof}
Since $M_\sigma$ has integer coefficients, its characteristic polynomial
$\chi_{M_\sigma}$ is monic with integer coefficients. We deduce that $\mu_{\Z,\sigma}$ is
monic with integer coefficients. We claim that, for all primes $\ell$,
\begin{equation}\label{eq:mu}
\mu_{\Z,\sigma}=\mu_{\ell,\sigma}.
\end{equation}
On the one hand, $\mu_{\Z,\sigma}$ has the same irreducible factors over
$\Q$ as the characteristic polynomial $\chi_{M_\sigma}$ of $M_\sigma$. Since $\sigma$
cancels $x^n-1$, they occur with multiplicity one in $\mu_{\Z,\sigma}$. 
Hence, $\mu_{\Z,\sigma}$ is the product of irreducible factors of
$\chi_{M_\sigma}$, taken with multiplicity one.

On the other hand, $\mu_{\ell,\sigma}$ has the same irreducible factors as the
characteristic polynomial of $\overline{\sigma}$, which is the reduction of
$\chi_{M_\sigma}$ modulo $\ell$. Since, $\overline{\sigma}$ cancels
$x^n-1$, $\mu_{\ell,\sigma}$ is product of the irreducible factors of
$\chi_{\Z,\sigma}$ modulo $\ell$, taken with multiplicity one. We obtain
equation~\eqref{eq:mu}.   

Finally, it is a classic property of minimal polynomial that all its roots
have nonzero eigenspaces.
\end{proof}

We already mentioned the link between the eigenspace of $1$ and
Theorem~\ref{th:fixed subfield}. We now make this more precise:

\begin{lemma}\label{lem:dim E_1}
Using the previous notations, we have $\dim E_1=r'$, except for a finite
set of primes~$\ell$.
\end{lemma}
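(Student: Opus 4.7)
The plan is to translate the statement about $\dim E_1$ into a purely linear algebra statement about the integer matrix $M_\sigma - I$, compute its rank over $\Q$, and then use Smith Normal Form to control the few primes where the rank can drop.

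First I would set up the identification. For any prime $\ell$ coprime to $\lvert U_\text{tor}\rvert$, the natural map $U/U_\text{tor} \otimes \F_\ell \to U/U^\ell$ is an isomorphism of $\F_\ell$-vector spaces (of dimension $r$), equivariant for the $\sigma$-action, so $E_1 = \ker(M_\sigma - I \bmod \ell)$ acting on $\F_\ell^r$. Thus $\dim E_1 = r - \operatorname{rank}_{\F_\ell}(M_\sigma - I)$.

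Second I would compute $\operatorname{rank}_\Q(M_\sigma - I)$. Let $V \subset U$ be the preimage under $\pi : U \to U/U_\text{tor}$ of the kernel of $M_\sigma - I$ in $\Z^r$, that is, $V = \{u \in U : \sigma(u)/u \in U_\text{tor}\}$. The map $u \mapsto \sigma(u)/u$ is a homomorphism $V \to U_\text{tor}$ with kernel equal to the $\sigma$-fixed units of $K$, which are precisely the units of $K^{\langle\sigma\rangle}$. Since $U_\text{tor}$ is finite, $V$ and $U_{K^{\langle\sigma\rangle}}$ have the same rank, namely $r'$. Projecting back, $\ker(M_\sigma - I)$ is a subgroup of $\Z^r$ of rank $r'$, so $\operatorname{rank}_\Q(M_\sigma - I) = r - r'$.

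Third I would apply Smith Normal Form to $M_\sigma - I \in \mathrm{M}_r(\Z)$: there exist $P,Q \in \GL_r(\Z)$ and positive integers $d_1 \mid d_2 \mid \cdots \mid d_{r-r'}$ with
\[
P(M_\sigma - I)Q = \operatorname{diag}(d_1,\ldots,d_{r-r'},0,\ldots,0).
\]
Reducing modulo a prime $\ell$, the rank drops to something smaller than $r-r'$ exactly when $\ell \mid d_{r-r'}$. Hence, excluding the finite set of primes dividing $\lvert U_\text{tor}\rvert \cdot d_1 \cdots d_{r-r'}$, we get $\operatorname{rank}_{\F_\ell}(M_\sigma - I) = r - r'$ and therefore $\dim E_1 = r'$, as claimed.

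The main subtlety is the second step: one must not confuse the $\sigma$-fixed units (which obviously have rank $r'$) with the elements whose class in $U/U_\text{tor}$ is fixed by $M_\sigma$, since the latter is a priori a larger subgroup. The argument through the homomorphism into the finite group $U_\text{tor}$ is what makes these two ranks coincide and thereby pins down $\operatorname{rank}_\Q(M_\sigma - I) = r-r'$.
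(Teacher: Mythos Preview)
Your proof is correct and follows essentially the same strategy as the paper's: show that $M_\sigma - I$ has $\Q$-rank exactly $r-r'$, then exclude the finitely many primes $\ell$ at which the rank modulo $\ell$ drops. The paper does this by splitting off a basis of $\sigma$-fixed units and taking the determinant of $M_\sigma-\id$ on a $\sigma$-stable complement, whereas you use Smith Normal Form on the full matrix together with the homomorphism $u\mapsto\sigma(u)/u$ into $U_\text{tor}$ to compute the $\Q$-rank; your treatment of this step is in fact more careful than the paper's sketch, but the underlying idea is the same.
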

\begin{proof}
Consider a system of fundamental units. By
Theorem~\ref{th:fixed subfield} there exists a basis $(u_i)$, $1 \leq
i\leq r$, of $U/U_\text{tors}$ such that the first $r'$ elements are
fixed by $\sigma$ and, no unit in the subgroup $V$ generated by $u_i$,
$r'+1\leq i\leq r$, is fixed by $\sigma$. After block-diagonalization,
we can assume that $V/V_\text{tors}$ is stable by $\sigma$ and we
let $M_\sigma$ be the matrix
of $\sigma$ on $V/V_\text{tors}$. The determinant of
$(M_\sigma-\text{id})$ is an integer $D$. If $\ell$ is prime to
$D$, the discriminant of $\sigma$ on $V/V^\ell$ is
non-zero. Hence, $\dim E_1\leq r'$, which completes the proof.
\end{proof}

As a first application, we study the case of cyclic extensions of
prime degree.
\begin{proposition}\label{prop:cycl}
Let $n$ be an odd prime and $K/\Q$ a cyclic Galois
extension of degree $n$. Let $p$ and $\ell$ be two primes such that
$\Phi_n(p)$ is divisible by $\ell$. Let $\rho$ be a ring morphism which
sends any element $x$ of $K$ with $\nu_p(x) \geq 0$ into the
field $\F_{p^n}$. Let $\sigma$ be an automorphism of $K$
of order $n$ for which there exists a constant $\kappa$ such that, for
all $x\in K$ of positive $p$-valuation, $\rho(\sigma(x))=p^\kappa \rho(x)
$. Then we have $\mathcal{R}=n-2$.
\end{proposition}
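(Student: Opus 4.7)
The strategy is to invoke Corollary~\ref{cor:R} and compute $r$ and $\dim E_A$ explicitly, using the very restrictive $\Q[\langle\sigma\rangle]$-module structure forced on $U\otimes\Q$ by the hypothesis that $n$ is an odd prime.

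First I would dispose of the unit rank. Because $n$ is an odd prime, the Galois group $\langle\sigma\rangle$ has no element of order $2$, so complex conjugation cannot act non-trivially on $K$; hence $K$ is totally real and $r=r_1-1=n-1$. The fixed subfield $K^{\langle\sigma\rangle}$ has prime index $n$ in $K$, and since $[K:\Q]=n$ this forces $K^{\langle\sigma\rangle}=\Q$, so $r'=0$.

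The heart of the argument is then to identify the minimal polynomial $\mu_{\Z,\sigma}$ of $\sigma$ acting on $U/U_\text{tors}$. It divides $x^n-1=(x-1)\Phi_n(x)$, and the two factors are coprime in $\Z[x]$. Since $\sigma$ acts trivially exactly on the units of $K^{\langle\sigma\rangle}=\Q$ modulo torsion, the fixed sublattice of $U/U_\text{tors}$ is zero, so $(x-1)$ does not divide $\mu_{\Z,\sigma}$. Hence $\mu_{\Z,\sigma}\mid\Phi_n(x)$; but $\Phi_n$ is irreducible over $\Q$ of degree $n-1=r$, which equals the rank of $U/U_\text{tors}$, so $\mu_{\Z,\sigma}=\Phi_n(x)$ and this is also the characteristic polynomial of $M_\sigma$ (each irreducible factor occurs with multiplicity one because $\sigma$ is annihilated by the separable polynomial $x^n-1$).

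Now I reduce modulo $\ell$. By Equation~\eqref{eq:mu} in the proof of Lemma~\ref{lem:non emptyness}, $\mu_{\ell,\sigma}=\mu_{\Z,\sigma}\bmod\ell=\Phi_n(x)\bmod\ell$. The hypothesis $\ell\mid\Phi_n(p)$ means $p$ has order exactly $n$ in $\F_\ell^*$ (since $n$ is prime and $\ell$ is taken coprime to a small bad set, in particular $p\not\equiv 1$), so $\F_\ell$ contains all $n$-th roots of unity and $\Phi_n(x)$ factors as $\prod_{i=1}^{n-1}(x-c_i)$ with the $c_i$ being precisely the $n-1$ primitive $n$-th roots of unity in $\F_\ell^*$. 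Consequently $\sigma$ is diagonalizable over $U/U^\ell$ with each eigenspace $E_{c_i}$ of dimension exactly $1$ (they sum to $r=n-1$).

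Finally, the constant $A=p^\kappa$ with $1\leq\kappa\leq n-1$ is a non-trivial power of the primitive $n$-th root $p$ in $\F_\ell^*$; since $n$ is prime, $A$ is itself a primitive $n$-th root of unity, hence $A\in\{c_1,\dots,c_{n-1}\}$ and $\dim E_A=1$. Corollary~\ref{cor:R} then yields $\mathcal{R}=r-\dim E_A=(n-1)-1=n-2$, as claimed.

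The only subtle point, and the one I would worry about most, is ensuring that $1$ is genuinely not an eigenvalue of $\sigma$ on $U/U^\ell$: this is guaranteed by Lemma~\ref{lem:dim E_1} only outside a finite set of bad primes, and requires that the reduction $\mu_{\Z,\sigma}\bmod\ell$ remain separable (which is automatic here, since $\Phi_n$ splits into distinct linear factors whenever $n\mid\ell-1$, and the hypothesis $\ell\mid\Phi_n(p)$ with $\ell$ large forces exactly this). Everything else is bookkeeping on the representation $\Q(\zeta_n)\cong U\otimes\Q$ of the cyclic group of prime order~$n$.
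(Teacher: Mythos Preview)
Your proof is correct and follows essentially the same route as the paper's: both compute $\mu_{\Z,\sigma}=\Phi_n(x)$ by ruling out the factor $x-1$ via $r'=0$, then use that $\deg\Phi_n=n-1=r$ to force every eigenspace to be one-dimensional, and finish with Corollary~\ref{cor:R}. The only cosmetic differences are that you argue $K$ is totally real via the absence of an order-$2$ element in $\Gal(K/\Q)$ (the paper uses that an odd-degree polynomial has a real root), and you spell out explicitly why $A=p^\kappa$ is a primitive $n$-th root of unity and hence an eigenvalue---a point the paper leaves implicit.
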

\begin{proof}
We want to compute $\mu_{\Z,\sigma}$. Since $\sigma$ has order $n$,
$\mu_{\Z,\sigma}$ is a
divisor of $(x^n-1)=(x-1)\Phi_n(x)$. 
By Lemma~\ref{lem:dim E_1}, $\dim \ker (\sigma-\text{id})=r'$, the
unit rank of the subfield fixed by $\sigma$. In our case, this
subfield is $\Q$, so $r'=0$. Then, we have $\mu_{\Z,\sigma}=\Phi_n(x)$. 

Let $f$ be a defining polynomial of $K$. Since $f$ has odd degree, it has
at least a real root~$\alpha$. Since $K$ is Galois, $K=\Q(\alpha)$, so all
roots of $K$ are real, hence its unit rank is $n-1$. By Lemma~\ref{lem:non
emptyness}, since $\deg(\Phi_n)=n-1=\dim U/U^\ell$, all the eigenspaces of
roots of $\Phi_n$ have dimension one. Using Corollary~\ref{cor:R}, we have
$\mathcal{R}=n-2$.

\end{proof}

\subsection{Fields of small degree}

We are now in position to list possible cases for fields of small degree.
As a warm-up, we start with the case of degree 2. The imaginary case is
of course trivial, since the unit rank is 0. For the real quadratic case,
the unit rank $r$ is 1, and one could wonder whether there are cases when
we can tell in advance that the virtual logarithm of the unit is 0
modulo $\ell$ with our method.

In fact, this does not occur. Indeed, the automorphism $\sigma$ is of
order $2$ and therefore the unit rank $r'$ of the subfield is 0. By
Lemma~\ref{lem:dim E_1} the dimension of the eigenspace $E_1$ is
therefore 0 as well. This is no surprise: the fundamental unit is not
defined over $\Q$, so it is not fixed by $\sigma$. The next step is to
study $\mu_{\Z,\sigma}$. Since $\sigma$ as order $2$, $\mu_{\Z,\sigma}$
divides $x^2-1$, and we have just seen that 1 is not an eigenvalue.
Therefore we deduce that $\mu_{\Z,\sigma} = x+1$. Hence the vector space
$U/U^\ell$ is reduced to the eigenspace $E_{-1}$. Since $-1$ is precisely
the value $A$ as in (\ref{eq:log automorphisms}), we can not conclude.
\medskip

The cases of degree 3 and 5 are covered by Proposition~\ref{prop:cycl}.
In Table~\ref{tab:cases of automorphisms}, we list the cases for degree 4
and 6. In all cases, a classification according to the signatures of the
field and of the fixed subfield is enough to conclude about the value of
$\mathcal{R}$. 

\begin{theorem}\label{thm:fdal}
The values of $\mathcal{R}$ for $K/\Q$ of degree 4
or 6 having non-trivial automorphisms are as given in
Table~\ref{tab:cases of automorphisms}.
\end{theorem}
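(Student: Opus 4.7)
The plan is to prove Theorem \ref{thm:fdal} by a case analysis on the order of the automorphism $\sigma$ (which divides $\deg K$) and on the signatures of $K$ and of the fixed subfield $K^{\langle\sigma\rangle}$. For each case I would compute $\mathcal{R}$ from Corollary \ref{cor:R} by pinning down $\dim E_A$, with $A \equiv p^{\kappa} \pmod{\ell}$ as in Theorem~\ref{th:galois}.

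First, for degree $4$ the possibilities for $\ord(\sigma)$ are $2$ or $4$, and for degree $6$ they are $2$, $3$ or $6$. The unit rank $r$ of $K$ is $r_1 + r_2 - 1$ and only depends on the signature, and the same formula gives $r'$ for the rank of $K^{\langle\sigma\rangle}$. Thus the row of Table~\ref{tab:cases of automorphisms} is specified by the triple (signature of $K$, $\ord(\sigma)$, signature of $K^{\langle\sigma\rangle}$).

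Second, I would identify $\mu_{\Z,\sigma}$. Since $\sigma$ has order $d \in \{2,3,4,6\}$ in these cases, $\mu_{\Z,\sigma}$ is a squarefree divisor of $x^d-1 = \prod_{e \mid d}\Phi_e(x)$. By Lemma~\ref{lem:dim E_1}, the factor $(x-1) = \Phi_1$ appears in $\mu_{\Z,\sigma}$ if and only if $r' \geq 1$; moreover $\dim E_1 = r'$ (for all but finitely many $\ell$, which I may discard). The remaining cyclotomic factors of $\mu_{\Z,\sigma}$ are forced by the requirement that $\sum_c \dim E_c = r$, together with the lower bounds $\dim E_c \geq 1$ from Lemma~\ref{lem:non emptyness}: summing the degrees $\deg \Phi_e$ over the non-trivial divisors $e \mid d$, together with $r'$, must equal $r$. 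In the small-degree setting at hand this inequality is tight in every case, so each non-trivial cyclotomic factor $\Phi_e(x)$ of $\mu_{\Z,\sigma}$ contributes eigenspaces of dimension exactly $1$ to each of its $\deg\Phi_e$ roots modulo $\ell$ (using that $\ell \mid \Phi_n(p)$ and the relevant cyclotomic polynomials split over $\F_\ell$).

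Third, for each case I would determine the value $A$ attached to $\sigma$ via Equation~\eqref{eq:log automorphisms}. By Theorem~\ref{th:galois}, $A \equiv p^\kappa \pmod \ell$ for some $\kappa \in [1,\ord(\sigma)-1]$, and $A$ is a root of $\Phi_{\ord(\sigma)}(x)$ modulo $\ell$ (since $\sigma$ acts as a nontrivial element of order dividing $\ord(\sigma)$ on the residue field). So $A$ lies in one of the specific eigenspaces identified in step two, and $\dim E_A$ is either $0$ (if $A$ is not an eigenvalue of $\sigma$ on $U/U^\ell$) or $1$ (since each non-trivial eigenspace has dimension one). Plugging into Corollary~\ref{cor:R} then gives the entry $\mathcal{R} = r - \dim E_A$ recorded in Table~\ref{tab:cases of automorphisms}.

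The main obstacle is the bookkeeping in the degree-$6$, $\ord(\sigma) = 6$ rows and in the degree-$4$, $\ord(\sigma) = 4$ rows, where $\mu_{\Z,\sigma}$ can be $\Phi_1 \Phi_2 \Phi_3$ or $\Phi_1 \Phi_2 \Phi_4$ or their proper sub-products, and one must correctly locate $A = p^\kappa \bmod \ell$ among the roots of these factors (the factor $\Phi_n$ being forced since $\ell \mid \Phi_n(p)$). Once this is done each row of the table follows directly, with no further computation beyond the signature arithmetic for $r, r'$.
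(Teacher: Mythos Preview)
Your overall strategy matches the paper's: a case analysis on $\ord(\sigma)$ and signatures, determination of $\mu_{\Z,\sigma}$ as a divisor of $x^{\ord(\sigma)}-1$, and then an application of Corollary~\ref{cor:R}. However, your key claim in step two is wrong. You assert that ``summing the degrees $\deg\Phi_e$ over the non-trivial divisors $e\mid d$, together with $r'$, must equal $r$'' and that ``this inequality is tight in every case, so each non-trivial cyclotomic factor $\Phi_e$ contributes eigenspaces of dimension exactly~$1$.'' This fails in at least two rows of the table. For $\deg K=4$, $\ord(\sigma)=2$, signature $(4,0)/(2,0)$ one has $r=3$, $r'=1$, and your sum gives $r'+\deg\Phi_2=1+1=2\neq 3$; in fact $\dim E_{-1}=2$ here, and since $A=-1$ this matters: the correct value is $\mathcal{R}=r-\dim E_A=3-2=1$, whereas your argument would give $\mathcal{R}=2$. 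Similarly, for $\deg K=6$, $\ord(\sigma)=3$, signature $(6,0)/(2,0)$ one has $r=5$, $r'=1$, and $r'+\deg\Phi_3=3\neq 5$; the two roots of $\Phi_3$ each have $\dim E_c=2$, so $\mathcal{R}=5-2=3$, not $5-1=4$.

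What the paper does in these cases, and what your proposal is missing, is an argument via the \emph{characteristic} polynomial of $\sigma$ on $U/U^\ell$ (not just the minimal polynomial). Once $\mu_{\Z,\sigma}$ is known, the multiplicities of its irreducible factors in $\chi_\sigma$ are constrained by $\deg\chi_\sigma=r$ and by the fact that $\chi_\sigma\in\Z[x]$, so Galois-conjugate roots of a given $\Phi_e$ appear with equal multiplicity. In the $(6,0)$, $\ord(\sigma)=3$ case, for instance, $\chi_\sigma=(x-1)(x^2+x+1)^2$, which forces $\dim E_c=2$ for each root $c$ of $x^2+x+1$. You need to add this multiplicity analysis to handle the rows where the eigenspaces are not one-dimensional; Lemma~\ref{lem:non emptyness} alone only gives a lower bound.
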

\begin{table}
\begin{tabular}{|c|c|l|l|c|c|c|}
\hline
$\deg(K)$ & $\ord(\sigma)$ & sign($K$), sign($K^{\langle \sigma\rangle}$) &\qquad\qquad  $\mu_{\Z,\sigma}$      & $\mathcal{R}$ & $r$  & $r-\mathcal{R}$      \\

\hline
\hline
4        &     2          & (4,0), (2,0)         &   $(x-1)(x+1)$             & 1    & 3   & 2           \\
         &                & (2,1), (2,0)         &   $(x-1)(x+1)$             & 1    & 2   & 1           \\
         &                & (0,2), (0,1)         &   $x+1$                    & 0    & 1   & 1           \\
         &                & (0,2), (2,0)         &   $x-1$                    & 1    & 1   & 0           \\
\cline{2-7}
         &     4          & (4,0), -             &   $(x+1)(x^2+1)$           & 2    & 3   & 1           \\
         &                & (0,2), -             &   $x+1$                    & 1    & 1   & 0           \\ 
\hline

   6     &     2          &  (0,3), (1,1)        &  $(x-1)(x+1)$             &  1 & 2   & 1           \\
         &                &  (0,3), (3,0)        &  $x-1$                    &  2 & 2   & 0           \\
\cline{2-7}
         &     3          & (6,0), (2,0)         &   $(x-1)(x^2+x+1)$         & 3    & 5   & 2           \\
         &                & (0,3), (0,1)         &   $x^2+x+1$              & 1    & 2   & 1           \\
\cline{2-7}
         &     6          & (6,0), -             &  $(x+1)(x^2+x+1)(x^2-x+1)$                     & 4 & 5   & 1           \\
         &                & (0,3), -             &  $x^2+x+1$                     & 1 & 2   & 1          \\
\hline
\end{tabular}
\caption{Table of values of $\mathcal{R}$ for fields of degree 4 and 6.}
\label{tab:cases of automorphisms}
\end{table}

\begin{proof}
Let us consider the various cases of Tab.~\ref{tab:cases of
automorphisms}. In each case, we use a strategy of proof that is not so
different from the real quadratic case that we mentioned in
introduction.  In
order to determine the minimal polynomial $\mu_{\Z,\sigma}$, we consider the
factors of $x^n-1$ in $\Z[x]$ and we use the fact that $\deg \mu_{\ell,\sigma}$ is
at most $\dim(U/U^\ell)=r$.
\goodbreak

\underline{Case $\deg(K)=4$ and $\ord(\sigma)=2$} 
\begin{itemize}
\item Case when sign($K$)=(4,0) and sign($K^{\langle \sigma
\rangle}$)=(2,0). Then, $r=3$ and $r'=1$. Further, $x-1$
divides $\mu_{\Z,\sigma}$ with multiplicity one. Since $\sigma$
cancels $x^2-1$, the minimal polynomial is
$\mu_{\Z,\sigma}=x^2-1$. Hence, we have $\dim E_{-1}=2$. Since
$A=-1$, we obtain $\mathcal{R}=r'=1$.
\item Case when sign($K$)=(2,1) and sign($K^{\langle \sigma
\rangle}$)=(2,0). Note
first that (2,0) is the unique possibility for sign($K^{\langle \sigma
\rangle}$). Indeed, if $\alpha$ is a real root of a defining polynomial of $K$,
then $\Q\left(\alpha+\sigma(\alpha)\right)$ is fixed by $\sigma$ and has degree
two, so it is $K^{\langle \sigma \rangle}$. Since this quadratic field is real,
its signature is (2,0). As above we have that $\dim E_1\geq 1$ and therefore
$\mu_{\Z,\sigma}=(x-1)(x+1)$, implying that $\mathcal{R}=1$.
\item Case when sign($K$)=(0,2) and sign($K^{\langle \sigma \rangle}$)=(0,1).
Here $r=1$ and $r'=0$. For sufficiently large values of $\ell$, by
Lemma~\ref{lem:dim E_1}, the space $E_1$ of units fixed by $\sigma$ has
dimension $r'=0$. Since the minimal polynomial divides $x^2-1$, we
have 
$\mu_{\Z,\sigma}=x+1$. We deduce that $\mathcal{R}=0$.
\item Case when sign($K$)=(0,2) and sign($K^{\langle \sigma \rangle}$)=(2,0).
Here we have $r=r'=1$. Then any unit is fixed by $\sigma$ and
$E_1=U/U^\ell$, so $\mu_{\Z,\sigma}=x-1$ and $\mathcal{R}=1$.
We remark that the fields of Example~\ref{ex:deg4} falls in this
category.
\end{itemize}
\goodbreak

\underline{Case $\deg(K)=4$ and $\ord(\sigma)=4$}
 Note that $K$ is either totally
real or its defining polynomial has no real root.
Hence we have two cases:
\begin{itemize}    
\item Case when sign($K$)=(4,0).
Here we can apply to $\tau=\sigma^2$ the results on the case of degree four
polynomials with automorphisms of order two. Hence we have
$\dim\ker(\sigma^2-1)=\dim\ker
(\tau-1)=1$ and $\dim(\sigma^2+1)=\dim(\tau+1)=2$. 
The fixed field has
degree $1$, so $r'=0$. Then, the minimal polynomial is
$\mu_{\Z,\sigma}=(x+1)(x^2+1)$, and we have $\mathcal{R}=2$. 
\item Case when sign($K$)=(0,2). Here the unit rank of $K$ is $r=1$, so the
minimal polynomial is linear. Since, $r'=0$, we have $\dim E_1=0$, so
$\mu_{\Z,\sigma}=x+1$. Since $A$ is of order $4$ it is not $-1$; hence, we
obtain $\mathcal{R}=0$. Note that here the group automorphism
$\overline{\sigma}$ equals $-1$, so it has a smaller order that the field
automorphism $\sigma$. 
\end{itemize}
\goodbreak

\underline{Case $\deg(K)=6$ and $\ord(\sigma)=2$}

Here the signature of $K$ can be $(6,0)$, $(4,1)$, $(2,2)$ and
$(0,3)$. We only deal with the case $(0,3)$ in the present version of
our work.

The unit rank of $K$ is $r=2$
and the minimal polynomial is a factor of $x^2-1$. The value of
$\mathcal{R}$ is determined by the signature of the subfield fixed by
$\sigma$, which is cubic and can have signature (3,0) or (1,1).
\begin{itemize}
\item Case when sign($K^{\langle \sigma \rangle}$)=(1,1). The unit rank of the
fixed subfield is
$r'=1$ and $\mathcal{R}=1$.
\item Case when sign($K^{\langle \sigma \rangle}$)=(3,0). Here we have
$r'=2$, so $\dim E_1=r$ and $\mu_{\Z,\sigma}=x-1$. This shows
that $\mathcal{R}=2.$
\end{itemize}
\goodbreak

\underline{Case $\deg (K)=6$ and $\ord(\sigma)=3$}

Note that, the
signature ($r_\R$,$r_\C$) of $K$ satisfies $r_\R\equiv 0\mod 3$. Indeed, if a
defining polynomial of $K$ has a real root $\alpha$, the roots $\sigma(\alpha)$
and $\sigma^2(\alpha)$ are also real. The two values for the signature
are (6,0) and (0,3).
\begin{itemize}
\item Case when sign($K$)=(6,0). Since $K$ is real, $K^{\langle \sigma
\rangle}$
is also real, so $r'=1$. As in the previous cases, the polynomial
$\mu_{\Z,\sigma}$ is a factor of $(x-1)(x^2+x+1)$. Since, $\dim
E_1=r'=1$ is
neither $0$ nor $r$, we have $\mu_{\Z,\sigma}=(x-1)(x^2+x+1)$. Since, the
characteristic polynomial over $\Q$, of $\sigma$ restricted to 
$V=\text{ker}(\sigma^2+\sigma+1)$ has the same irreducible factors, we
have
$\chi_\sigma|_V=(x^2+x+1)^2$. Suppose ab absurdo that, for a root $c$ of
$\mu_{\Z,\sigma}$ modulo $\ell$, we have $\dim E_c\geq 3$. Then $\chi_{\sigma}$ modulo $\ell$ is
divisible by $(x-c)^3$. It is impossible because it has two roots of
multiplicity at least two, so, for the two roots of $x^2+x+1$
modulo $\ell$ we have $\dim E_c=2$. It implies that $\mathcal{R}=3$. 
\item Case when sign(K)=(0,3). The unit rank of $K$ is $r=2$, so the minimal
polynomial is $x-1$ or $x^2+x+1$. The fixed subgroup has degree $2$, so we
cannot have $r'=2$. This shows that $\mu_{\Z,\sigma}=x^2+x+1$. Then,
$r'=0$ and $\mathcal{R}=1$.
\end{itemize}
\goodbreak

\underline{Case $\deg(K)=6$ and $\ord(\sigma)=6$}

As in the case of cyclic
quartic Galois extensions, either $K$ is real or has no real roots. 
\begin{itemize}
\item Case when sign($K$)=(6,0). The unit rank of $K$ is $5$, so the minimal
polynomial is equal to a factor of $(x-1)(x+1)(x^2+x+1)(x^2-x+1)$,
having degree less than or equal to $5$. Since the fixed subgroup is $\Q$,
we have $r'=0$, so $\dim E_1=0$. The fixed subfield of $\sigma^3$ is a
cubic cyclic Galois extension, so its unit rank is two. Hence
$\dim\text{ker}(\sigma^3-1 )=2$. Since $\dim E_1=0$, $x^2+x+1$ divides
$\mu_{\Z,\sigma}$. We also deduce that $\dim \text{ker}(\sigma^3+1)=3$. The
subfield fixed by $\sigma^2$ has degree two, so its unit rank is at
most one. It
implies that $\dim(\ker(\sigma+1))\neq 3$, so $x^2-x+1$ divides
$\mu_{\Z,\sigma}$. Since the dimension of its kernel is even, $\dim E_{-1}\neq
0$, so $(x+1)$ divides $\mu_{\Z,\sigma}$. We conclude that
$\mu_{\Z,\sigma}=(x+1)(x^2+x+1)(x^2-x+1)$ and $\mathcal{R}=4$.
\item Case when sign($K$)=(0,3). The subfield fixed by $\sigma^3$ is a cyclic
cubic extension of $\Q$, so its unit rank is $2$. This means that $\dim
\text{ker}(\sigma^3-1)=2$, so the minimal polynomial divides $x^3-1$. The fixed
subgroup of $\sigma$ is $\Q$, so $\dim E_1=0$ and $\mu_{\Z,\sigma}=x^2+x+1$. We
obtain that $\mathcal{R}=1$. 
\end{itemize}

\end{proof}

\subsection{Effective computations}

Theorem \ref{thm:fdal} tells us that we do not need to consider the
logarithms of all units in many cases. If we have a system of units which
form a basis modulo $\ell$, we can make the theorem effective by solving
linear systems. This is a less stronger condition than computing a system
of fundamental units. One can investigate the use of Schirokauer maps to 
avoid any requirement of effective computations of units. Let us see a
series of examples which illustrate Theorem \ref{thm:fdal}.

\subsubsection{Minkowski units}
\label{sec:minkowski}

A {\em Minkowski unit} for $K$, if it exists, is a unit $\varepsilon$
such that a subset of the conjugates of $\varepsilon$ forms a system
of fundamental units. Some results on the classification of such
fields exist, we will come back to them in the final version of this
work.

As an example, when $K$ is totally cyclic of degree 3, it is real and
there exists always a Minkowski unit as shown by Hasse
\cite[p. 20]{Hasse48}. In that case, using the proof in \ref{th:galois}, we
see that
$$\log\rho(\varepsilon^{\sigma}) \equiv p^{\kappa}
\log\rho(\varepsilon) \bmod \ell,$$
so that we need to find $\log\rho(\varepsilon) \bmod \ell$ only. It matches
Table~\ref{tab:cases of automorphisms}, where we read that only
$r-\mathcal{R}=3-2=1$ (well chosen) Schirokauer map is required.

\subsubsection{The degree 4 cases}
\label{sec:exdeg4}

When the signature is $(4, 0)$ and the Galois group is $C_4$, we can
precise the structure of $U_K$, see \cite{Hasse48,Gras79}.
The first case is when $K$ admits a Minkowski unit, that is
$\varepsilon$ such that $U_K = \langle -1, \varepsilon,
\varepsilon^{\sigma}, \varepsilon^{\sigma^2}\rangle$. And we use the
same reasoning as in Section \ref{sec:minkowski} to reduce the number
of logarithms needed to $1$.

In the second case, $U_K = \langle -1, \epsilon_1, \epsilon_\chi,
\epsilon_\chi^{\sigma}\rangle$, where $\epsilon_1$ is the fundamental
unit of the quadratic subfield and $\epsilon_\chi$ is a generator
of the group of relative units, that is $\eta \in U_K$ such that
$\Norm_{K/K_2}(\eta) = \pm 1$. We gain two logarithms since we can use
the Galois action for $\log\rho(\epsilon_\chi^{\sigma})$, and we know
$\log\rho(\epsilon_1)$.

Note that Table~\ref{tab:cases of automorphisms} predicts that the two
cases above, with or without relative units, lead to the same number of
Schirokauer maps: $r-\mathcal{R}=1$.

\medskip
For signature $(0, 2)$, the rank of $K$ is $1$, and the
fundamental unit is that from the real quadratic subfield, so we don't
need any logarithm at all. To be more precise, let us detail the case
of our favorite example: $f(X) = X^4+1$ which defines the $8$-th roots
of unity, say $K = \Q(\zeta_8)$. The Galois group of $f$ is $V_4$ and
$K$ has two automorphisms
$\sigma_1: x \mapsto -x$, $\sigma_2: x \mapsto 1/x$. We compute
that
$$K^{\langle\sigma_1\rangle} = \Q(i),
K^{\langle\sigma_2\rangle} = \Q(\sqrt{2}),
K^{\langle\sigma_1\sigma_2\rangle} = \Q(\sqrt{-2}).$$
The corresponding factorizations of $f(X)$ are
$$f(X) = (X^2+i) (X^2-i),$$
$$f(X) = (X^2 - \sqrt{2} X + 1) (X^2 + \sqrt{2} X + 1),$$
$$f(X) = (X^2 - \sqrt{-2} X - 1) (X^2 + \sqrt{-2} X - 1).$$
Since $f$ has signature $(0, 2)$, we have $U_K = \langle
\zeta_8\rangle \times \langle\varepsilon\rangle$, where $\varepsilon$
comes from $K^{\langle\sigma_2\rangle}$, the only real quadratic subfield.
By Theorem \ref{thm:fdal}, we do not need any logarithm of units for
use in NFS. 

\section{Reducing the number of Schirokauer maps}
\label{sec:units}

In this section, we use the preceding Section to conclude that we can
reduce the number of Schirokauer maps needed in NFS-DL.

We use the notations of Section \ref{sec:vanishing}. A system
of $r$ units is a basis modulo~$\ell$ if its image in $U/U^\ell$ is a
basis. Let $p$ be a prime and $n$ an integer such that the reduction
of $g$ modulo~$p$ has an irreducible factor of degree~$n$. Let $\ell$
be a prime factor of $p^n-1$, coprime to $p-1$. In order to reduce the
number of Schirokauer maps associated to $g$, we follow the steps below:
\begin{enumerate}
\item We find a system of $r$ elements in $U$ which form a basis
modulo~$\ell$.
\item We compute an integer $\mathcal{R}\leq r$, as large as possible, and a
system of $r$ elements $u_1,\ldots,u_r$ in $U$ which form
a basis modulo~$\ell$, such that the discrete logarithms of
$\rho(u_1),\ldots,\rho(u_{\mathcal{R}})$ are zero modulo~$\ell$. 
\item Using any set of $r$ Schirokauer maps and the system of
fundamental units above, we compute a set of Schirokauer maps
$\lambda_1,\ldots,\lambda_r$ such that
the NFS algorithm can be run using only the last $r-\mathcal{R}$ Schirokauer maps
$\lambda_{\mathcal{R}+1},\ldots,\lambda_{r}$. 
\end{enumerate}

We do not discuss the first point here. Point
(2) was studied in Section \ref{sec:vanishing}. Point $(3)$ is solved by
the corollary of the following theorem.

\begin{theorem}
Let $\lambda_1,\ldots,\lambda_r$ be a set of Schirokauer maps and let 
$u_1,\ldots,u_r$ a system of effectively computed units in $U$, whose image in
$U/U^\ell$ form a basis. Then there exits a set of effectively computable
Schirokauer maps $\lambda'_1,\ldots,\lambda'_r$ such that, for $1\leq i,j\leq r$, 
\begin{equation}\label{eq:dual of lambda}
\lambda'_i(u_j)=\left\{\begin{array}{l}
1\qquad \text{ if }i=j,\\
0\qquad \text{ otherwise.} 				      
  \end{array}\right.
\end{equation}
\end{theorem}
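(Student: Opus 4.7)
The plan is to reduce everything to linear algebra over $\F_\ell$. First I would observe that $U/U^\ell$ is naturally an $\F_\ell$-vector space, and that under the standing assumptions (recalled earlier in the paper, namely that $\ell$ is coprime to $\#U_{\text{tors}}$), its dimension equals the unit rank $r$. The hypothesis that $u_1,\ldots,u_r$ form a basis modulo~$\ell$ means exactly that their classes are an $\F_\ell$-basis of $U/U^\ell$.

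Next I would exploit the defining property of a Schirokauer map: the surjection $\Lambda=(\lambda_1,\ldots,\lambda_r)\colon U/U^\ell\to (\Z/\ell\Z)^r$ is a surjection between two $\F_\ell$-vector spaces of the same dimension~$r$, hence an isomorphism. Equivalently, the restrictions $\lambda_i|_{U/U^\ell}$ form an $\F_\ell$-basis of the dual space $(U/U^\ell)^*$. Consequently, the pairing matrix
\begin{equation*}
M=(M_{ij})_{1\le i,j\le r},\qquad M_{ij}=\lambda_i(u_j)\bmod\ell,
\end{equation*}
is invertible over $\F_\ell$, since it is the matrix expressing one basis of $(U/U^\ell)^*$ against the dual basis of the basis $(u_j)$ of $U/U^\ell$.

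Then I would set $N=M^{-1}$ and define
\begin{equation*}
\lambda'_i=\sum_{k=1}^{r} N_{ik}\,\lambda_k\qquad(1\le i\le r).
\end{equation*}
Each $\lambda'_i$ is a $\Z/\ell\Z$-linear combination of Schirokauer maps and therefore is itself a group homomorphism from $(K_\ell)/(K_\ell)^\ell$ to $\Z/\ell\Z$; moreover the full map $\Lambda'=(\lambda'_1,\ldots,\lambda'_r)=N\Lambda$ is obtained from $\Lambda$ by an invertible change of basis, so it is still surjective on $U/U^\ell$, i.e.\ it is again a valid system of Schirokauer maps. A direct computation gives
\begin{equation*}
\lambda'_i(u_j)=\sum_{k=1}^{r} N_{ik}M_{kj}=(NM)_{ij}=\delta_{ij},
\end{equation*}
which is exactly \eqref{eq:dual of lambda}.

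Finally, for effectivity I would note that each entry $\lambda_i(u_j)$ is computed by evaluating the polynomial formula \eqref{eq:polschi} on the explicit unit $u_j$ and reading off the appropriate coordinate, and that inverting an $r\times r$ matrix modulo the prime $\ell$ is a routine linear-algebra task. The only subtle point—and the one I would emphasize most in the write-up—is verifying that $M$ is actually invertible, but this follows cleanly from the surjectivity clause in the definition of a Schirokauer map combined with the dimension count $\dim_{\F_\ell}(U/U^\ell)=r$; there is no genuine analytic or number-theoretic obstruction beyond that.
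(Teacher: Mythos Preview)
Your proof is correct and follows exactly the same route as the paper: form the $r\times r$ matrix $M=(\lambda_i(u_j))$, invert it, and set $\lambda'_i=\sum_k (M^{-1})_{ik}\lambda_k$. If anything, your write-up is more complete than the paper's, since you explicitly justify the invertibility of $M$ via the surjectivity-plus-dimension argument and verify that the resulting $\Lambda'$ is again a Schirokauer map, whereas the paper simply asserts the existence of $C=L^{-1}$ without comment.
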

\begin{proof}
Let $L=(l_{i,j})$ be the $r\times r$ matrix of entries $l_{i,j}=\lambda_i(u_j)$.
Let $C=(c_{i,j})$ be the inverse of $L$. Then, we put
\begin{equation}
\lambda'_i=\sum_{n=1}^r  c_{i,n} \lambda_n .
\end{equation}
We have $\lambda'_i(u_j)=(CL)_{i,j}$, so the maps $\lambda'_i$ verify the condition in
Equation~\eqref{eq:dual of lambda}.
\end{proof}

\begin{corollary}
Let $u_1,\ldots,u_r$ be a set of units, effectively computed, which form a basis
modulo~$\ell$. Assume that for some integer $\mathcal{R}$, $1\leq \mathcal{R}\leq r$, the first
$\mathcal{R}$ units $u_1,\ldots,u_{\mathcal{R}}$ are such that $\log\left(\rho(u_i)\right)\equiv
0\mod \ell$. Then, there exists a set of $r$ effectively computable Schirokauer
maps $\lambda'_1,\ldots,\lambda'_r$ such that NFS can be run with the last $r-\mathcal{R}$ maps instead of the complete set of $r$ maps. 
\end{corollary}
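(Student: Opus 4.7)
The plan is to apply the preceding theorem directly and then read off the consequence for NFS. I would start from any valid family of $r$ Schirokauer maps $\lambda_1,\ldots,\lambda_r$ (for instance those produced by the construction of Equation~\eqref{eq:polschi}) and invoke the theorem on the given basis $u_1,\ldots,u_r$ to obtain effectively computable maps $\lambda'_1,\ldots,\lambda'_r$ satisfying $\lambda'_i(u_j)=\delta_{ij}$. I would check that this tuple is itself a valid family of Schirokauer maps: linearity is immediate from the construction as an $\F_\ell$-linear combination of the $\lambda_i$, and surjectivity of $\Lambda'=(\lambda'_1,\ldots,\lambda'_r)$ on $U/U^\ell$ is automatic since the images of $u_1,\ldots,u_r$ are already the standard basis vectors of $(\Z/\ell\Z)^r$.

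Next I would rewrite the virtual-logarithm identity~\eqref{eq:implicit} using $u_1,\ldots,u_r$ as the distinguished units and $\lambda'_1,\ldots,\lambda'_r$ as the Schirokauer maps: for any $\phi$ yielding a relation one obtains
\begin{equation*}
\log(\rho_f(\phi(\alpha)))\equiv\sum_{j=1}^{r}\lambda'_j(\phi(\alpha))\,\log_{\Lambda'}u_j+\sum_{\gq\in\mF_f}\val_\gq(\phi(\alpha))\,\log_{\Lambda'}\gq\pmod\ell.
\end{equation*}
The paper's convention actually asks for $\Lambda(\varepsilon_j)$ to take value $h$ (rather than $1$) on the diagonal, but since $\gcd(h,\ell)=1$ the mismatch amounts to a harmless rescaling of the $\lambda'_i$, or equivalently of the $\log_{\Lambda'}u_j$, and does not affect which columns can be dropped.

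Finally I would apply the hypothesis: for $j\leq\mathcal{R}$ we have $\log\rho(u_j)\equiv 0\pmod\ell$, and therefore $\log_{\Lambda'}u_j\equiv h^{-1}\log\rho(u_j)\equiv 0\pmod\ell$ as well. Consequently the first $\mathcal{R}$ terms of the sum over units vanish identically on every relation, so both the unknowns $\log_{\Lambda'}u_1,\ldots,\log_{\Lambda'}u_\mathcal{R}$ and the columns produced by $\lambda'_1,\ldots,\lambda'_\mathcal{R}$ can be removed from the NFS linear system, and the algorithm can be run using only $\lambda'_{\mathcal{R}+1},\ldots,\lambda'_r$. The only real subtlety, rather than an obstacle, is the bookkeeping around the scaling factor $h$ and the check that the $\lambda'_i$ are genuine Schirokauer maps; both are handled by the preceding theorem together with the invertibility of $h$ modulo $\ell$.
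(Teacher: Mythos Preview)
Your proposal is correct and follows essentially the same route as the paper: apply the preceding theorem to obtain the dual family $\lambda'_i(u_j)=\delta_{ij}$, plug this into the virtual-logarithm identity~\eqref{eq:implicit}, and observe that the coefficients attached to $\lambda'_1,\ldots,\lambda'_{\mathcal{R}}$ vanish because $\log\rho(u_i)\equiv 0$. The paper makes the identification of those coefficients by substituting $\gamma=u_i$ directly into~\eqref{eq:implicit} (using that units have zero valuation at every $\gq$), whereas you invoke the definition $\log_{\Lambda'}u_j=h^{-1}\log\rho(u_j)$; these are two phrasings of the same step, and your extra remarks on surjectivity and the $h$-scaling are harmless bookkeeping.
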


\begin{proof}
Using any set of Schirokauer maps, we compute $\lambda'_1,\ldots,\lambda'_r$
such that Equation~\eqref{eq:dual of lambda} holds.

By Equation~\eqref{eq:implicit} in Section~\ref{ssec:virtual logarithms}, when
running NFS with the maps $\lambda'_1,\ldots,\lambda'_r$, the linear
algebra stage computes the virtual logarithms of the ideals in the factor base
together with $r$ constants $\chi_i$, $1\leq i\leq r$ such that
\begin{equation}\label{eq:sum lambda}
  \log(\rho(\gamma))\equiv \sum_{\gq\in \mathcal{F}}\log \gq
\val_{\gq}(\gamma) + \sum_{i=1}^r \chi_i \lambda_i(\gamma) \mod \ell.
\end{equation}
For $1\leq i\leq r$, when injecting $\gamma=u_i$ in Equation~\eqref{eq:sum lambda} we
obtain that $\chi_i=\log(\rho(u_i))$. For $1\leq i\leq \mathcal{R}$ we have
$\log(\rho(u_i)\equiv 0\mod \ell$, and therefore $\chi_i\equiv0\mod
\ell$. Hence, Equation~\eqref{eq:sum lambda} can be
rewritten with $r-\mathcal{R}$ Schirokauer maps:
\begin{equation}\label{eq:sum lambda 2}
  \log(\rho(\gamma))\equiv \sum_{\gq\in \mathcal{F}}\log \gq
\val_{\gq}(\gamma) + \sum_{i=\mathcal{R}+1}^{r} \chi_i \lambda_i(\gamma)
\mod \ell.
\end{equation}

\end{proof}

\begin{example} (continued)
The corollary above states that the polynomials in the family
described in Example \ref{ex:deg4} do not require any
Schirokauer map. Moreover, note that if $f_1$ and $f_2$ are two
polynomials in this family and $\mu_1,\mu_2$ are two positive
rationals such that $\mu_1+\mu_2=1$, then $\mu_1f_1+\mu_2f_2$ also
belongs to this family.
\end{example}

A more important example is that of cubic polynomials with an automorphism
of order three. Then, we can effectively compute a linear combination
$\Lambda_{1,2}$ of
any two Schirokauer maps $\Lambda_1$ and $\Lambda_2$ so that NFS can be run
with $\Lambda_{1,2}$ as unique Schirokauer map.

\section{Two new methods of polynomial selection}
\label{sec:polyselect}
In this section, we propose two new methods to select the polynomials
$f$ and $g$, in the case of finite fields that are low degree extensions
of prime fields. The first one is an extension to non-prime fields of the method
used by Joux and Lercier~\cite{JoLe03} for prime fields. The second one, which 
relies heavily 
on rational reconstruction, insists on having coefficients of size
$O(\sqrt{p})$ for $g$. For both methods, $f$ has very small coefficients, 
of size $O(\log p)$.

\subsection{The state-of-art methods of polynomial selection}
\label{subsec:JLSV06 polyselect}

Joux, Lercier, Smart and Vercauteren~\cite{JLSV06} introduced two methods of
polynomial selection, one which is the only option for medium characteristic
finite fields and one which is the only known for the non-prime large
characteristic fields. 

\subsubsection{The first method of JLSV}\label{sssec:JLSV1}
Described in \cite[\S 2.3]{JLSV06}, this method is best adapted to the medium
characteristic case. It produces two polynomials $f,g$ of same degree $n$, which
have coefficients of size $\sqrt{p}$ each. 

One starts by selecting a polynomial $f$ of the form $f = f_v + a f_u$ with a
parameter  $a$ to be chosen. Then one computes a rational reconstruction
$(u,v)$ of $a$ modulo $p$ and one defines $ g = v f_v + u f_u$. Note that, by
construction, we have $f = v \cdot g \bmod p$. Also note that both polynomials
have coefficients of size $\sqrt{p}$.

\begin{example}
Take $p = 1000001447$
and $a = 44723 \geqslant \lceil \sqrt{p} \rceil$. One has 
$ f = x^4 - 44723 x^3 - 6 x^2 + 44723 x + 1$ and 
$g = 22360 x^4 - 4833 x^3 - 134160 x^2 + 4833 x + 22360$ with 
$u/v = 4833 / 22360 $ a rational reconstruction of $a$ modulo $p$.
\end{example}

 The norm product is 
$N_f N_g = 
E^{2n} p = E^{2n} Q^{1/n}$.

If one wants to use automorphisms as in Section~\ref{sec:polyselect}, then one
chooses $f$ in a family of polynomials which admit automorphisms. For example when $n=4$, one can  
take $f$ in the family presented in Tab.~\ref{tab:foster}, formed of 
degree 4 polynomials with cyclic Galois group of
order four, having an explicit automorphism:
 $f = (x^4 - 6 x^2 + 1) + a (x^3 - x) = 
f_v + a f_u$. Note that the second polynomial $g$ belongs to the same family and
has the same automorphisms. 

\subsubsection{The second method of JLSV}\label{sssec:JLSV2}

The second method is described in \cite[\S 3.2]{JLSV06}. It starts by 
computing $g$ of degree $n$ then it computes $f$ of degree $\deg f 
\geqslant n$.
First one selects
$g_0$ of degree $n$ and small coefficients. Then one chooses an integer 
$W \sim p^{1/(d +1)}$, but slightly larger, and set $g = g_0(x + W)$. The  smallest degree 
coefficient of $g$ has size $W^n$. We need to take into account the 
skewness of the coefficients. 
 The polynomial $f$ is computed by reducing the lattice of polynomials of degree
at most $d$, divisible by $g$ modulo $p$. We do this by defining the matrix $M$ in 
Sec.~\ref{subsec: Joux Lercier polyselect}, 
eq.~\eqref{eq:LLL generalization}, with $\varphi = g$. We obtain a polynomial $f$ with coefficients of size
$p^{n / (d +1)} = Q^{1/(d + 1)}$.

\begin{example}
Consider again the case of $p=1125899906842783$ and $n=4$. We take $g_0$ a polynomial of degree four and small coefficients, for example 
$g_0 = x^4 - x^3 - 6x^2 + x + 1$.  We can have $\deg(f)=d$ for any value of $d\geq n$, we take $d=7$ for the
example. We use $W = 77\geqslant p^{1/(d+1)} $, where we emphasize that we do
not use $Q^{1/(d+1)}$, and we set
\[g=g_0(x+W)=x^4 + 307x^3 + 35337x^2 + 1807422x + 34661012.\]
 We construct the lattice of polynomials of degree at most $7$ which are
divisible by $g$ modulo $p$. We obtain 
\[\begin{array}{c}f=12132118x^7 + 11818855x^6 + 2154686x^5 \qquad\qquad\qquad\\
\qquad\qquad\qquad- 7076039x^4 + 7796873x^3 + 7685308x^2 + 4129660x - 14538916.\end{array}\]
 Note that $f$ and $g$ have coefficients of size $Q^{1/8}$. 
\end{example}

For comparison, we compute the norms' product: $E^{d+n}Q^{2/(d+1)}$. However,
one might obtain a better norms product using the skewness notion introduced by
Murphy~\cite{Mur99}. Without entering into details, we use as a lower bound for
the norms product the quantity $E^{d+n}Q^{3/2(d+1)}$. Indeed, the coefficients of
$f$ have size $Q^{1/(d+1)}$ and the coefficients of $g$ have size $Q^{1/(d+1)}$,
which cannot be improved more than $Q^{1/2(d+1)}$ using skewness. This bound is optimistic,
but even so the new methods will offer better performances.  

\subsection{The generalized Joux-Lercier method}
\label{subsec: Joux Lercier polyselect}

In the context of prime fields,
Joux and Lercier proposed a method in \cite{JoLe03} to select polynomials
using lattice reduction. They start with a polynomial $f$ of degree $d+1$
with small coefficients, such that $f$ admits a root $m$ modulo $p$.
Then, a matrix $M$ is constructed with rows that generate the lattice of
polynomials of degree at most $d$ with integer coefficients, that also
admits $m$ as a root modulo $p$. We denote by $\LLL(M)$ the matrix 
obtained by applying the LLL algorithm to the rows of $M$:

\begin{footnotesize} 
\begin{equation} \label{eq:LLL}M = \left[
\begin{array}{cccc} 
p      &  0    & \cdots &  0     \\
-m     &  1    &   0    &  0     \\
\vdots &\ddots & \ddots &  0     \\
-m^d   &\cdots & 0      &  1     \\
\end{array}
\right],
 ~ ~ \LLL(M) =  \left[\begin{array}{c@{~~}c@{~~~}c@{~~~}
c} 
g_0    & g_1    & \cdots &  g_d    \\
*      & *      & \cdots & *       \\
\vdots & \vdots & \ddots & \vdots  \\
*      & *      & \cdots & *       \\
 \end{array}\right]~.  
\end{equation} 
\end{footnotesize}

The first row gives a polynomial $g = g_0 + g_1 x + \ldots + g_d x^d$,
that has a common root $m$ with $f$, and the pair of polynomials $(f,g)$
can be used for computing discrete logarithm in $\F_p$ with the NFS
algorithm.
\bigskip

In order to tackle discrete logarithms in $\F_{p^n}$,
we generalize this to polynomials $(f,g)$ that share an irreducible common
factor $\varphi$ of degree $n$ modulo $p$. Let $d'$ be an integer parameter
that we choose below. We select $f$ an irreducible
polynomial in $\ZZ[x]$ of degree $d^{'}+1 \geqslant n$, with small
coefficients, good sieving properties and an irreducible degree $n$
factor $\varphi= \sum_{i=0}^{n} \varphi_i x^i$ modulo $p$, that we force
to be monic. We
define a $(d'+1)\times(d'+1)$ matrix $M$ whose rows generate the lattice
of polynomials of degree at most $d'+1$ for which $\varphi$ is also a
factor modulo $p$. Then, running the LLL algorithm on this matrix gives
a matrix whose rows are generators with smaller coefficients. A possible
choice for the matrix $M$ is as follows, where the missing coefficients
understand to be zero.

\begin{footnotesize}
\begin{equation} \label{eq:LLL generalization}
M = \left[\begin{array}{cccccc} p            &
&           & &       &              \\ & \ddots      &           &
&       & \\ &             & p         &                &       &
\\ \varphi_{0} & \varphi_{1}& \cdots    & \varphi_{n}    &       & \\
& \ddots     &\ddots      &                &\ddots &              \\ &
&\varphi_{0}& \varphi_{1}    &\cdots & \varphi_{n} \\
\end{array}\right] \begin{array}{l} \left \rbrace \begin{array}{l} \\ \deg
\varphi = n \mbox{ rows} \\ \\ \end{array}\right. \\ \left \rbrace
\begin{array}{l} \\ \deg g + 1 - \deg \varphi \\ = d'+1-n \mbox{ rows}
\\ \end{array}\right.
\end{array}
\LLL(M) =  \left[\begin{array}{c c c c}
        g_0 & g_1 & \cdots & g_{d'} \\
	&    &        &        \\
        &    &        &        \\
        &     \multicolumn{2}{c}{*}    & \\
	&    &        &         \\
        &    &        &         \\
\end{array}\right]~.
\end{equation}
\end{footnotesize}
            
One can remark that since $\varphi$ has been made
monic, the determinant of $M$ is $\det(M) = p^n$. The first row of
$\LLL(M)$ gives a polynomial $g$ of degree at most $d^{'}$
that shares the common factor $\varphi$ of degree $n$ modulo $p$ with
$f$. 
The coefficients of $g$ have approximately a size $(\det M)^{1/(d^{'}+1)} =
p^{n/(d^{'}+1)}$ if we assume that the dimension stays small.

Note that, when $n=1$, this method produces the same pair $(f,g)$ as the
method of Joux-Lercier. Indeed, in this case $\varphi=x-m$ and the rows of
the matrix $M$ in Equation~\eqref{eq:LLL} generate the
same lattice as the rows of matrix $M$ in 
Equation~\eqref{eq:LLL generalization}. 

\begin{remark}
    By considering a smaller matrix, it is possible to produce a
    polynomial $g$ whose degree is smaller than $d' = \deg f-1$. 
    This does not seem to be a good idea. Indeed, the size of the
    coefficients of $g$ would be the same as the coefficients of a polynomial
	obtained starting with a polynomial $f$ with coefficients of the same
	size but of a smaller degree ($d'$ or less).
\end{remark}

We now discuss the criteria to select the parameter $d'=\deg f-1$ 
with respect to the bitsize of $p$. The most important quantity to
minimize is the size of the product of the norms
$\Reslt(\phi,f)\Reslt(\phi,g)$, for the typical polynomials $\phi$ that
will be used. In this setting, the best choice is to
stick to polynomials $\phi$ of degree 1, and we denote by $E$ a bound on
its two coefficients that we will tune later.
For any polynomial $P$, let us denote by
$\norm{P}_\infty$ the maximum absolute value of the coefficients of $P$. 
Since $f$ has been selected to have small coefficients, we obtain the
following estimate for the product of the norms:
\begin{equation}
    |\Reslt(\phi,f)\Reslt(\phi,g)| \approx
       \left(E^{\deg(f)}\right)\left(
       ||g||_\infty E^{\deg(g)}\right),
\end{equation}
where we did not write factors that contribute in a negligible way.
In Table~\ref{tab: ||g||oo wrt deg f, g, varphi, Joux-Lercier}, we list
the possible choices for the degrees, that we expect to be practically
relevant for discrete logarithms in $\F_{p^2}$ and~$\F_{p^3}$.

\begin{table}[htb] \caption{Size of the product of the norms, for various
        choices of parameters with the generalized Joux-Lercier method,
    in $\F_{p^2}$ and $\F_{p^3}$.} 
\label{tab: ||g||oo wrt deg f, g, varphi, Joux-Lercier} 
\centering 
\begin{tabular}{|c|c|c|c|c|c|} \hline
    Field & $\deg \varphi$ & $\deg f$ &
               $\deg g$ & $||g||_\infty$ & $E^{\deg f}E^{\deg g}||g||_\infty$\\
    \hline \hline
    $\F_Q = \F_{p^{2}}$ & 2 & 4 & 3 & $p^{1/2} = Q^{1/4}$ & $Q^{1/4} E^{7} $ \\
                      & 2 & 3 & 2 & $p^{2/3} = Q^{1/3}$ & $Q^{1/3} E^{5} $ \\ 
    \hline \hline     & 3 & 6 & 5 & $p^{3/6} = Q^{1/6}$ & $Q^{1/6} E^{11} $ \\
    $\F_Q = \F_{p^{3}}$ & 3 & 5 & 4 & $p^{3/5} = Q^{1/5}$ & $Q^{1/5} E^{9} $ \\
                      & 3 & 4 & 3 & $p^{3/4} = Q^{1/4}$ & $Q^{1/4} E^{7} $ \\
    \hline 
\end{tabular}
\end{table}

As for the value of the parameter $E$, although the asymptotic complexity
analysis can give hints about its value, it is usually not reliable for
fixed values. Therefore we prefer to use a rough approximation of
$E$ using the values of the same parameter in the factoring variant of
NFS as implemented in CADO-NFS. These values of $E$ w.r.t. $Q$ are
collected in Table~\ref{tab: E values for various Q} and we will use them
together with Table~\ref{tab: ||g||oo wrt deg f, g, varphi, Joux-Lercier}
in order to plot the estimate of the running time in
Figure~\ref{fig:norm-product-f-g--wrt-Q--Fpn}
to compare with other methods.  Note that, a
posteriori, the norms product in our case is smaller than in the
factoring variant of NFS. Hence, one can take a slightly smaller values for $E$.

\begin{table}[htb] \caption{Practical values of $E$ for $Q$ from 60 to
    220 decimal digits.}
 \label{tab: E values for various Q} 
$$\begin{array}{|c||ccccccccc|} \hline 
  Q (\mbox{dd}) &  60 &  80 & 100 & 120 & 140 & 160 & 180 & 204 & 220 \\
Q (\mbox{bits}) & 200 & 266 & 333 & 399 & 466 & 532 & 598 & 678 & 731 \\ 
\hline 
E (\mbox{bits}) &  19 &  20 &  21 &  23 &  25 &  27 &  28 &  29 &  30\\ 
\hline
\end{array}$$
 \end{table}

\subsection{The conjugation method} \label{subsec: our polyselect}

We propose another method to select polynomials for solving discrete
logarithms in $\F_{p^n}$ with the following features: the resulting
polynomial $f$ has degree $2n$ and small coefficients, while the
polynomial $g$ has degree $n$ and coefficients of size bounded by about
$\sqrt{p}$. In the next section, an asymptotic analysis shows that there
are cases where this is more interesting than the generalized
Joux-Lercier method; furthermore, it is also well suited for small degree
extension that can be reached with the current implementations.

Let us take an example.
\begin{example}
We take the case of $n=11$ and $p = 134217931$,  which is a random prime
congruent to $1$ modulo $n$. The method is very general, this case is the
simplest. We enumerate the integers $a=1,2,\ldots$ until $\sqrt{a}$ is
irrational but exists in
$\F_p$, i.e.~the polynomial $x^2-a$ splits in $\F_p$. We call $\lambda$ a square
root of $a$ in $\F_p$ and test if $x^n-\lambda$ is irreducible modulo $p$. If it
is not the case, we continue and try the next value of $a$. For example $a=5$
works.

 Then we set $\lambda= 108634777=\F_p(\sqrt{5})$ and we
put $\varphi=x^{11}-\lambda$. Next, we do a rational reconstruction of
$\lambda$, i.e.~we find two integers of size $O(\sqrt{p})$ such that
$u/v\equiv\lambda\mod p$. We find $u=1789$ and $v=10393$. The conjugation method consists in setting:
\begin{enumerate}
\item $f=(x^{11}-\sqrt{5})(x^{11}+\sqrt{5})=x^{22}-5$;
\item $g=vx^{11}-u=10393x^{11}-1789$. 
\end{enumerate}
By construction $f$ and $g$ are divisible by $\varphi$ modulo $p$.
\end{example}

We continue with a construction that works for $\F_{p^2}$ when $p$ is
congruent to 7 modulo 8. 

\begin{example}
Let $p \equiv 7 \bmod 8$ and let $f = x^4 + 1$ that is irreducible modulo
$p$. From the results in Section \ref{sec:exdeg4}, we use
the factorization $(x^2 + \sqrt{2}x +1) (x^2 - \sqrt{2}x +1)$ of
$f(x)$. Since $2$ is a square modulo $p$, we take $\varphi = x^2 +
\sqrt{2}x + 1 \in \Fp[x]$. Now, by rational reconstruction of
$\sqrt{2}$ in $\F_p$, we can obtain two integers $u,v \in \ZZ$ such
that $\frac{u}{v}\equiv \sqrt{2}\mod p$, and $u$ and $v$ have size
similar to $\sqrt{p}$.  We define $g = v x^2 + ux + v$. Then $f$ and
$g$ share a common irreducible factor of degree 2 modulo $p$, and
verify the degree and size properties that we announced.
\end{example}

This construction can be made general: first, it is possible to obtain
pairs of polynomials $f$ and $g$ with the claimed degree and size
properties for any extension field $\F_{p^n}$; and second, in many small
cases that are of practical interest, it is also possible to enforce the
presence of automorphisms.
The general construction is based on Algorithm~\ref{alg:conjugation}.

\begin{algorithm}[htb]
\caption{Polynomial selection with the conjugation method}
\label{alg:conjugation}
\KwIn{ $p$ prime and $n$ a small exponent}
\KwOut{ $f,g\in\Z[x]$ suitable for discrete logarithm computation with
NFS in $\F_{p^n}$}

Select $g_u(x), g_v(x)$, two polynomials with small integer coefficients,
$\deg g_u < \deg g_v = n$ \;

\Repeat {$\mu(x)$ has a root $\lambda$ in $\F_p$ and $g_v+\lambda g_u$ is
irreducible in $\F_p$}
{Select $\mu(x)$ a quadratic, monic, irreducible polynomial over
$\Z$ with small coefficients \;}
 $(u,v)\gets$ a rational reconstruction of $\lambda$ \;
 $f\gets \Reslt_Y(\mu(Y), g_v(x) + Yg_u(x))$ \;
 $g\gets vg_v +u g_u $ \;
 \Return {$(f,g)$}
\end{algorithm}

\begin{fact}[Properties of the conjugation method]
    The polynomials $(f,g)$ returned by Algorithm~\ref{alg:conjugation}
    verify:
    \begin{enumerate}
        \item $f$ and $g$ have integer coefficients and degrees $2n$ and
            $n$ respectively;
        \item the coefficients of $f$ have size $O(1)$ and the
            coefficients of $g$ are bounded by $O(\sqrt{p})$.
        \item $f$ and $g$ have a common irreducible factor $\varphi$ of
            degree $n$ over $\F_p$.
    \end{enumerate}
\end{fact}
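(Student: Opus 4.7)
The plan is to unpack each of the three properties from the algorithm by direct computation.

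For property (1) and the $f$-part of (2), I would first evaluate the resultant explicitly. Writing $\mu(Y) = Y^2 + aY + b$ and reading $g_v(x) + Y g_u(x)$ as a degree-$1$ polynomial in $Y$ with leading coefficient $g_u(x)$, the root-form of the resultant yields
$$f(x) \;=\; g_v(x)^2 \;-\; a\, g_u(x)g_v(x) \;+\; b\, g_u(x)^2 \;\in\; \Z[x].$$
Since $\deg g_v = n > \deg g_u$, the term $g_v(x)^2$ forces $\deg f = 2n$, and the coefficients of $f$ are small $\Z$-combinations of the $O(1)$ coefficients of $g_u, g_v, a, b$, hence of size $O(1)$. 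For $g = v g_v + u g_u$, the degree is exactly $n$ once $v \ne 0$ (which holds because $\lambda\ne 0$: otherwise $b=0$ contradicts the irreducibility of $\mu$ over $\Z$). The bound $\|g\|_\infty = O(\sqrt p)$ then reduces to the standard rational-reconstruction estimate $|u|, |v| = O(\sqrt p)$ combined with the $O(1)$ bound on $g_u, g_v$.

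For property (3), the plan is to reduce everything modulo $p$. Since $\mu$ is a monic quadratic over $\Z$ with a root $\lambda \in \F_p$, Vieta's formulas place the second root $\lambda'$ in $\F_p$ as well, so $\mu(Y) \equiv (Y-\lambda)(Y-\lambda') \pmod p$. Reducing the explicit resultant formula modulo $p$ gives
$$f(x) \;\equiv\; \bigl(g_v(x) + \lambda\, g_u(x)\bigr)\bigl(g_v(x) + \lambda'\, g_u(x)\bigr) \pmod p.$$
Setting $\varphi(x) = g_v(x) + \lambda\, g_u(x) \in \F_p[x]$, the loop condition guarantees $\varphi$ is irreducible of degree $n$, and manifestly $\varphi \mid f \pmod p$. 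On the $g$-side, rational reconstruction gives $u \equiv \lambda v \pmod p$ with $\gcd(v,p)=1$, whence
$$g \;=\; v g_v + u g_u \;\equiv\; v\bigl(g_v + \lambda g_u\bigr) \;=\; v\, \varphi \pmod p,$$
so $\varphi \mid g \pmod p$ as required.

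No step is a genuine obstacle: the only care points are recording the closed form for the resultant, invoking the standard guarantee that rational reconstruction outputs coprime representatives of size $O(\sqrt p)$ with $\gcd(v,p)=1$, and noting that a quadratic over $\Z$ with one root in $\F_p$ automatically splits completely over $\F_p$. Granted these elementary facts, each claim is essentially a one-line verification.
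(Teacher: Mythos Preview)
Your proof is correct and follows essentially the same approach as the paper's: both view $f$ as the product of $g_v + Y g_u$ over the two roots of $\mu$, deduce degree and coefficient bounds from there, and check that $\varphi = g_v + \lambda g_u$ divides both $f$ and $g$ modulo $p$. You are simply more explicit, writing out the closed form $f = g_v^2 - a\,g_u g_v + b\,g_u^2$ and the congruence $g \equiv v\varphi \pmod p$, where the paper is content to assert these facts.

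One small remark: your justification that $v \ne 0$ via $\lambda \ne 0$ is a detour. Rational reconstruction by definition returns a pair $(u,v)$ with $v$ invertible modulo $p$, hence $v \ne 0$, regardless of whether $\lambda$ vanishes; the irreducibility of $\mu$ is not needed for this point.
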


\begin{proof}
    The fact that $g$ has integer coefficients and is of degree $n$ is
    immediate by construction. As for $f$, since it is the resultant of
    two bivariate polynomials with integer coefficients, it is also with
    integer coefficients. Using classical properties of the resultant,
    $f$ can be seen as the product of the polynomial $ g_v(x) + Yg_u(x)$
    evaluated in $Y$ at the two roots of $\mu(Y)$, therefore its degree
    is $2n$. Also, since all the coefficients of the polynomials involved
    in the definition of $f$ have size $O(1)$, and the degree $n$ is
    assumed to be ``small'', then the coefficients of $f$ are also~$O(1)$.

    For the size of the coefficients of $g$, it follows from the output
    of the rational reconstruction of $\lambda$ in $\F_p$, which is
    expected to have sizes in $O(\sqrt{p})$ (in theory, we can not
    exclude that we are in a rare case where the sizes are larger,
    though).

    The polynomials $f$ and $g$ are suitable for NFS in $\F_{p^n}$,
    because both are divisible by $\varphi = g_v+\lambda g_u$ modulo $p$,
    and by construction it is irreducible of degree $n$.
\end{proof}

In the example above, for $\F_{p^2}$ with $p\equiv 7 \mod 8$,
Algorithm~\ref{alg:conjugation} was applied with 
$g_u = x$, $g_v = x^2+1$ and $\mu = x^2-2$. One can check that $f =
\Reslt_Y(Y^2-2, (x^2+1) + Yx) = x^4+1$, as can be seen from Section
\ref{sec:exdeg4}.
\bigskip

In Algorithm~\ref{alg:conjugation}, there is some freedom in the choices
of $g_u$ and $g_v$. The key idea to exploit this opportunity is to base
the choice on one-parameter families of polynomials for which an
automorphism with a nice form is guaranteed to exist, in order to use the
improvements of Section~\ref{sec:galois}.

In Table~\ref{tab:foster}, we list possible choices for $g_u$ and $g_v$
in degree $2$, $3$, $4$ and $6$, such that for any integer $\lambda$,
$g_v+\lambda g_u$ as a simple explicit cyclic automorphism. The
families for $3$, $4$ and $6$ are taken from~\cite{Gras79,Gras87} (see
also \cite{Fos11} references for larger degrees).

\begin{table} 
\begin{center}
\caption{Families of polynomials of degree 2, 3, 4 and 6 with
cyclic Galois group.}
\label{tab:foster} 
$
\begin{array}{|c|c|c|c|c|} 
\hline n & \text{coeffs of $g_v+ag_u$} & g_v & g_u & 
                                           \text{automorphism: } \theta\mapsto \\
\hline 
\hline   & (1,a,1)  & x^2 + 1 & x &  1/\theta \\
       2 & (-1,a,1) & x^2 - 1 & x & -1/\theta \\
         & (a,0,1)  & x^2     & 1 & -  \theta \\
\hline 3 & (1,-a-3,-a,1) & x^3-3x-1 & -(x^2+x)& -(\theta+1)/\theta \\
\hline 4 & (1,-a,-6,a,1) & x^4-6x^2+1 & x^3-x   & -(\theta+1)/(\theta-1) \\
\hline 6 & \begin{array}{l}(1,-2a,-5a-15,\\
\quad-20,5a,2a+6,1) 
\end{array}& \begin{array}{l}x^6+6x^5-\\ \quad20x^3-15x^2+1\end{array} &
\begin{array}{l}2x^5+5x^4-\\ \quad5x^2-2x\end{array} &-(2\theta+1)/(\theta - 1) \\
\hline
\end{array} 
$
\end{center}
\end{table}

\begin{theorem}
    For any prime $p$ and $n$ in $\{2,3,4,6\}$, the polynomials $f$ and
    $g$ obtained by the conjugation method using $g_u$ and $g_v$ as in
    Table~\ref{tab:foster} generate number fields with two automorphisms
    $\sigma$ and $\tau$ of order $n$ that verify the hypothesis of
    Theorem~\ref{th:galois}.
\end{theorem}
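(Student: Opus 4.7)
The plan is to exploit the defining property of the families in Table~\ref{tab:foster}: for each row the rational function $A(\theta)$ has order $n$ under composition and permutes the roots of $g_v + a g_u$ for \emph{every} value of the parameter $a$ (this is exactly why the whole one-parameter family has cyclic Galois group of order $n$). Once this is granted, both $\sigma$ and $\tau$ will essentially come for free from the \emph{same} $A$, and only the compatibility with $\varphi$ requires real work.

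First I would describe $K_f$ and $K_g$ explicitly. By definition of the resultant, $f(x) = \prod_{\mu(\beta)=0}(g_v(x)+\beta g_u(x))$, so $f$ splits over $\Q(\sqrt{\Disc(\mu)})$ into two Galois-conjugate factors of degree $n$, each lying in our family. Picking a root $\alpha$ of $f$, the identity $\beta=-g_v(\alpha)/g_u(\alpha)$ shows $\beta\in K_f$, so $K_f=\Q(\beta)(\alpha)$ is a degree-$2n$ extension containing the quadratic subfield $\Q(\beta)$. For $g$, writing $g=v\bigl(g_v+(u/v)g_u\bigr)$ shows that $K_g$ is the number field of a member of the family with rational parameter $u/v$.

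Next I would build $\sigma$ and $\tau$. The rational function $A$ has $\Q$-coefficients and permutes the roots of each of the two degree-$n$ factors of $f$. Hence $\alpha\mapsto A(\alpha)$ extends to an automorphism $\sigma$ of $K_f/\Q$ whose order equals that of $A$, namely $n$. The same rational function applied to a root of $g$ yields an automorphism $\tau$ of $K_g/\Q$ of order $n$, so in particular $A_\sigma=A_\tau=A$, the happy coincidence highlighted in Section~\ref{subsec: our polyselect}. The well-definedness requires only that the denominator of $A$ not vanish at the relevant algebraic numbers, which is guaranteed because that denominator has degree strictly less than $n$ and so cannot be divisible by the minimal polynomial of a generator of a degree-$n$ subextension.

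Finally I would verify the hypothesis of Theorem~\ref{th:galois}, namely $\varphi(\rho_f(A_\sigma(\alpha)))=0$. By the construction of the conjugation method $\varphi=g_v+\lambda g_u$ over $\F_p$, and $m=\rho_f(\alpha)$ is a root of $\varphi$ in $\F_{p^n}$. Since $\rho_f$ is a ring homomorphism, $\rho_f(A(\alpha))=A(m)$, once we confirm that the denominator of $A$ does not vanish at $m$; as that denominator has degree strictly less than $n=\deg\varphi$ and $\varphi$ is irreducible over $\F_p$, this is automatic. Since $A$ permutes the roots of $\varphi=g_v+\lambda g_u$, $A(m)$ is again a root of $\varphi$, so $\varphi(A(m))=0$. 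An identical argument on the $g$-side via $\tau$ finishes the proof. The main delicate point throughout is the regularity of the denominator of $A$ at the points in play, both in characteristic $0$ and modulo $p$; but in every case of Table~\ref{tab:foster} this is settled by the same elementary degree comparison.
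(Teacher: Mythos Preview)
Your proposal is correct and follows essentially the same approach as the paper: both argue that $g$ lies in the parametric family and hence inherits the automorphism $A$, that $K_f$ is generated over the quadratic field $\Q(\omega)$ by a root of $g_v+\omega g_u$ and so inherits the same $A$, and that $\varphi=g_v+\lambda g_u$ over $\F_p$ again lies in the family so that $A$ permutes its roots, which is exactly the hypothesis of Theorem~\ref{th:galois}. Your write-up is somewhat more explicit than the paper's---in particular you spell out the regularity of the denominator of $A$ via a degree comparison and the fact that $A_\sigma=A_\tau$---but the underlying argument is the same.
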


\begin{proof}
    The polynomial $g$ belongs to a family of Table~\ref{tab:foster},
    so its number field $K_g$ has an automorphism of order~$n$ given by the
    formula in the last column. 
    
    Let $\omega$ be a root of $\mu(x)$. The polynomial 
    $g_v+\omega g_u$ defines a number field that is an extension of
    degree $n$ of $\Q(\omega)$ and that admits an automorphism of order
    $n$, which fixes $\Q(\omega)$.  Since $f$ and $g_v+\omega g_u$
    generate the same number field, this shows that
    the number field $K_f$ has an automorphism of order $n$.

    The polynomial $\varphi$ is given by $g_v+\lambda g_u$. Therefore, it
    belongs to the same family as $g$ hence it has the same automorphism
    of order $n$ as $f$ and $g$. This shows that modulo $p$, the
    automorphism sends a root of $\varphi$ to another root of $\varphi$,
    as required in the hypothesis of Theorem~\ref{th:galois}.
\end{proof}

\begin{example} Let us apply the conjugation method for $\F_{p^3}$, where
    $p=2^{31}+11$.  Running Algorithm~\ref{alg:conjugation} with
    $g_u=-x^2-x$ and $g_v=x^3-3x-1$, one sees that $\mu = x^2-x+1$ has a
    root $\lambda=2021977950$ in $\F_p$ and that $g_v+\lambda g_u$ is
    irreducible in $\F_p[x]$. We obtain $f = x^6 - x^5 - 6x^4 + 3x^3 + 14x^2
    + 7x + 1$ and $g = 20413x^3 + 32630x^2 -28609x + 20413$. With 
    $\varphi = x^3 + 125505709x^2 + 125505706x + 2147483658$ as their GCD
    modulo $p$, we can
    check that the three polynomials $f$, $g$ and $\varphi$ admit
    $\theta\mapsto -(\theta+1)/\theta$ as an automorphism of order 3.
\end{example}

\subsection{Estimation and comparison of the methods.} 
\label{subsec:comparison of gal Joux Lercier and ours}
We have four methods of polynomial selection which apply to NFS in non-prime
fields:
\begin{itemize}
\item the two methods of JLSV, presented in~\ref{sssec:JLSV1} and
\ref{sssec:JLSV2}, denoted JLSV$_1$
and, respectively, JLSV$_2$;
\item the generalized Joux-Lercier method, presented in~\ref{subsec: Joux Lercier polyselect}, denoted GJL;
\item the conjugation method, presented in~\ref{subsec: our polyselect}, denoted bu Conj. 
\end{itemize}
We take the size of the product of the norms as the main quantity to
minimize, and we estimate its value~as
\begin{equation} \label{eq:norm-product-f-g} 
 E^{\deg f}||f||_\infty E^{\deg g} ||g||_\infty  ~.
\end{equation}

The starting point are the properties of the polynomials obtained with the
various methods in Tab.~\ref{tab: polyselect complexities}. 
\begin{table}[htb]
\caption{Theoretical complexities for polynomial selection methods, 
$n$ is the extension degree ($\F_{p^{n}}$), $Q = p^n$}
\label{tab: polyselect complexities}
\begin{tabular}{|l | c | c | c | c | c|}
\hline      method & $\deg g$ & $\deg f$    & $||g||_\infty$                & $||f||_\infty$ & $E^{\deg f + \deg g} ||f||_\infty ||g||_\infty$ \\
\hline Conj        & $n$      & $2n$        & $Q^{1/(2n)}$          & $O(1)$ & $E^{3n} Q^{1/(2n)}$ \\
\hline GJL         & $\geq n$ &$> \deg g$   & $Q^{1/(\deg g+1)}$ & $O(1)$ & $E^{\deg f + \deg g} Q^{1/(\deg g+1)} $ \\
\hline JLSV$_1$    & $n$      & $n$         & $Q^{1/(2n)}$ &  $Q^{1/(2n)}$ & $E^{2n} Q^{1/n}$ \\
\hline JLSV$_2$    & $n$      &$\geq \deg g$&  $Q^{1/(2(\deg f+1))}$ &  $Q^{1/(\deg f+1)}$ & $E^{\deg f + n} Q^{(3/2) 1/(\deg f+1)} $\\
\hline
\end{tabular}
\end{table}

When the best method depends on the size of the finite field in consideration,
we use rough estimates of $E$ taken from Table~\ref{tab: E values for various Q}.

In Table~\ref{tab:norm-product-f-g-wrt-Q-E} we summarize all the sizes
that we can get for reasonable choices of parameters for $\F_{p^n}$ with
$n\in\{2,3,4,5,6\}$, with all the methods at our disposition.

\begin{table}[htbp] \caption{
        Size of the product of norms for various choices of parameters
        and algorithms. We discard ($\otimes$) the methods which offer sizes of norms product which are clearly not 
competitive compared to some other one, assuming that 
$0.04 \log Q \leq \log E \leq 0.1 \log Q$ 
(Tab.~\ref{tab: E values for various Q})}
\label{tab:norm-product-f-g-wrt-Q-E}
$$\begin{array}{|c| c||c|c|c|c|c r|}
 \hline 
\deg g, \deg f & Q & \F_{p^n} & ||f||_{\infty} & g & ||g||_{\infty} &
       \mc{2}{c|}{E^{\deg f} ||f||_{\infty} E^{\deg g} ||g||_{\infty}} \\
 \hline \hline
 (2,3) & \mrm{8}{p^2} & \mrm{8}{\Fps} & \mrm{3}{O(1)} & \GJL & Q^{1/3} & 
                                                                 E^5 Q^{1/3} & \\
 \cline{1-1} \cline{5-8} 
(3,4) & & &               & \GJL   & Q^{1/4} & E^7 Q^{1/4}  & \otimes \\
 \cline{1-1} \cline{5-8} 
(2,4) & & &               & \Conj  & Q^{1/4} & E^6 Q^{1/4}  & \\
 \cline{1-1} \cline{4-8} 
(2,2) & & &  Q^{1/4}      & \JLSV_1 & Q^{1/4} & E^4 Q^{1/2}  & \otimes \\
 \cline{1-1} \cline{4-8} 
(2,2) & & &  Q^{1/6}      & \JLSV_2 & Q^{1/3} & E^4 Q^{1/2}  & \otimes \\
 \cline{1-1} \cline{4-8} 
(2,3) & & &  Q^{1/8}      & \JLSV_2 & Q^{1/4} & E^5 Q^{3/8} &  \otimes \\
 \cline{1-1} \cline{4-8} 
(2,4) & & &  Q^{1/5}      & \JLSV_2 & Q^{1/10} & E^6 Q^{3/10} &  \otimes \\
 \cline{1-1} \cline{4-8} 
(2,5) & & &  Q^{1/6}      & \JLSV_2 & Q^{1/12} & E^7 Q^{1/4} &  \otimes \\
 \hline \hline
 (3,4) & \mrm{8}{p^3} & \mrm{8}{\Fpc} & \mrm{3}{O(1)} & \GJL & Q^{1/4} & E^7 Q^{1/4} & \\
 \cline{1-1} \cline{5-8} 
(4,5) & & &               & \GJL  & Q^{1/5} & E^9 Q^{1/5}  & \otimes \\
 \cline{1-1} \cline{5-8} 
(3,6) & & &               & \Conj  & Q^{1/6} & E^9 Q^{1/6}  & \\
 \cline{1-1} \cline{4-8} 
(3,3) & & & Q^{1/6}       & \JLSV_1 & Q^{1/6} & E^6 Q^{1/3} & \otimes \\
 \cline{1-1} \cline{4-8} 
(3,3) & & & Q^{1/8}       & \JLSV_2 & Q^{1/4} & E^6 Q^{3/8} & \otimes \\
 \cline{1-1} \cline{4-8} 
(3,4) & & & Q^{1/10}      & \JLSV_2 & Q^{1/5} & E^7 Q^{3/10} & \otimes \\
 \cline{1-1} \cline{4-8} 
(3,5) & & & Q^{1/12}      & \JLSV_2 & Q^{1/6} & E^8 Q^{1/4} & \otimes \\
 \cline{1-1} \cline{4-8} 
(3,6) & & & Q^{1/7}      & \JLSV_2 & Q^{1/14} & E^9 Q^{3/14} & \otimes \\
 \hline \hline
 (4,5) & \mrm{8}{p^4} & \mrm{8}{\F_{p^{4}}} & \mrm{3}{O(1)} & \GJL & Q^{1/5} & E^9 Q^{1/5} & \\
 \cline{1-1} \cline{5-8} 
 (5,6) & & &              & \GJL    & Q^{1/6} & E^{11} Q^{1/6} & \otimes \\
 \cline{1-1} \cline{5-8} 
 (4,8) & & &              & \Conj   & Q^{1/8} & E^{12} Q^{1/8} & \otimes \\
 \cline{1-1} \cline{4-8} 
 (4,4) & & & Q^{1/8}      & \JLSV_1  & Q^{1/8} & E^8 Q^{1/4} & \\
 \cline{1-1} \cline{4-8} 
 (4,4) & & & Q^{1/10} & \JLSV_2  & Q^{1/5} & E^8 Q^{3/10} & \otimes \\
 \cline{1-1} \cline{4-8} 
 (4,5) & & & Q^{1/12} & \JLSV_2  & Q^{1/6} & E^9 Q^{1/4} & \otimes \\
 \cline{1-1} \cline{4-8} 
 (4,6) & & & Q^{1/14} & \JLSV_2  & Q^{1/7} & E^{10} Q^{3/14} & \otimes \\ 
 \cline{1-1} \cline{4-8} 
 (4,7) & & & Q^{1/8}  & \JLSV_2  & Q^{1/16} & E^{11} Q^{3/16} & \otimes \\
 \hline \hline
 (5,6) & \mrm{8}{p^5} & \mrm{8}{\F_{p^{5}}} & \mrm{3}{O(1)} & \GJL & Q^{1/6} & E^{11} Q^{1/6} & \\
 \cline{1-1} \cline{5-8} 
 (6,7) & & &              & \GJL   & Q^{1/7} & E^{13} Q^{1/7} & \otimes \\
 \cline{1-1} \cline{5-8} 
 (5,10) & & &              & \Conj   & Q^{1/10} & E^{15} Q^{1/10} & \otimes \\
 \cline{1-1} \cline{4-8} 
 (5,5) & & & Q^{1/10}      & \JLSV_1  & Q^{1/10} & E^{10} Q^{1/5} & \\
 \cline{1-1} \cline{4-8} 
 (5,5) & & &  Q^{1/12} & \JLSV_2  & Q^{1/6} & E^{10} Q^{1/4} & \otimes \\
 \cline{1-1} \cline{4-8} 
 (5,6) & & & Q^{1/14} & \JLSV_2  & Q^{1/7} & E^{11} Q^{3/14} & \otimes \\
 \cline{1-1} \cline{4-8} 
 (5,7) & & & Q^{1/8}  & \JLSV_2  & Q^{1/16} & E^{12} Q^{3/16} & \otimes \\
 \cline{1-1} \cline{4-8} 
 (5,8) & & & Q^{1/18} & \JLSV_2  & Q^{1/9} & E^{13} Q^{1/6} & \otimes \\
 \hline \hline
 (6,7) & \mrm{8}{p^6} & \mrm{8}{\F_{p^{6}}} & \mrm{3}{O(1)} & \GJL & Q^{1/7} & E^{13} Q^{1/7} & \\
 \cline{1-1} \cline{5-8} 
 (7,8) & & &              & \GJL     & Q^{1/8}  & E^{15} Q^{1/8} & \otimes \\
 \cline{1-1} \cline{5-8} 
 (6,12) & & &              & \Conj   & Q^{1/12} & E^{18} Q^{1/12} & \otimes \\
 \cline{1-1} \cline{4-8} 
 (6,6) & & & Q^{1/12}      & \JLSV_1  & Q^{1/12} & E^{12} Q^{1/6} & \\
 \cline{1-1} \cline{4-8} 
 (6,6) & & & Q^{1/14}      & \JLSV_2  & Q^{1/7} & E^{12} Q^{3/14} & \otimes \\
 \cline{1-1} \cline{4-8} 
 (6,7) & & & Q^{1/16}      & \JLSV_2  & Q^{1/8} & E^{13} Q^{3/16} & \otimes \\
 \cline{1-1} \cline{4-8} 
 (6,8) & & & Q^{1/18}      & \JLSV_2  & Q^{1/9} & E^{14} Q^{1/6} & \otimes \\
 \cline{1-1} \cline{4-8} 
 (6,9) & & & Q^{1/20}      & \JLSV_2  & Q^{1/10} & E^{15} Q^{3/20} & \otimes \\
 \hline 
\end{array}$$
 \end{table}
 To
choose the best method, we now need to compare the values in the last
column of 
Tab.~\ref{tab:norm-product-f-g-wrt-Q-E}.  
For that we consider in Tab.~\ref{tab: E values for various Q}
practical values of $E$ and $Q$ for $Q$ from 60 to 220 decimal digits~(dd). 
We note that $\log E = 0.095 \log Q$ for $Q$ of $60$~dd and 
$\log E = 0.041 \log Q$ for $Q$ of $220$~dd. We can now eliminate a few other 
methods in Tab.~\ref{tab:norm-product-f-g-wrt-Q-E}:
\begin{description}
 \item[$\boldsymbol{n=2}$] We discard the $\JLSV_1$ and  $\JLSV_2$ methods because 
 their complexities are worse than GJL complexity: $E^{4}Q^{1/2} > E^5
 Q^{1/3}$ since $Q^{1/6} > E$ (indeed, $Q^{0.1} > E$).
 \item[$\boldsymbol{n=3}$] A second time we discard the $\JLSV_1$ method because
 the GJL method is better.
Indeed $E^6 Q^{1/3} < E^{7} Q^{1/4}$ while $E > Q^{1/12}$ i.e.~when
 $Q$ is less or around 60 dd.
 \item[$\boldsymbol{n=4}$] This time we discard GJL method with $(\deg g, \deg f) =
 (5,6)$ because it is less efficient than GJL with $(4,5)$ whenever $E
 > Q^{1/60}$.
We also discard the Conj method because we are not in the case $E < Q^{1/40}$. 
 \item[$\boldsymbol{n=5}$] We discard the Conj method ($E^{15} Q^{1/10}$) which is
 worse than
   GJL with $(5,6)$ while $ E > Q^{1/60}$. We also discard GJL method with 
  $(6,7)$ because the same method with $(5,6)$ is more efficient whenever 
  $Q^{1/84} < E$.
 \item[$\boldsymbol{n=6}$] As for $n=5$, the Conj method is not competitive because
 we are not in the case 
 $E < Q^{1/84}$. We also discard the GJL method with $(7,8)$ compared
 with $(6,7)$ because 
 we don't have $E < Q^{1/112}$.
\end{description}

We represent the results in 
Fig.~\ref{fig:norm-product-f-g--wrt-Q--Fpn}. 
We can clearly see that when $Q = p^2$ is more than 70 decimal digits long
(200 bits, i.e.~$\log_2 p = 100$), it is much better to use the
construction with $\deg f = 4$ and $\deg g = 2$ for computing discrete
logarithms in $\F_{Q} = \Fps$. For $Q$ of more than 220 dd, 
$(\deg f, \deg g) =
(3,4)$ starts to be a better choice than $(2,3)$ but our new method with
$(2,4)$ is even better, the value 
in \eqref{eq:norm-product-f-g} 
is about 20 bits smaller.  For $Q = p^3$ from 60
to 220~dd (i.e.~$p$ from 20 to 73~dd), the choice $(3,4)$ gives a lower value
of \eqref{eq:norm-product-f-g}. Then for
$Q$ of more than 220~dd, the method with $(3,6)$ is better. For $Q $ of 220~dd, 
\eqref{eq:norm-product-f-g} takes the 
same value with $(\deg g, \deg f) = (3,6)$ as with $(3,4)$.

\begin{figure}[htbp]
\centering
\subfigure[$Q=p^4,p^5,p^6$: JLSV$_1$ or GJL method]{
  \includegraphics{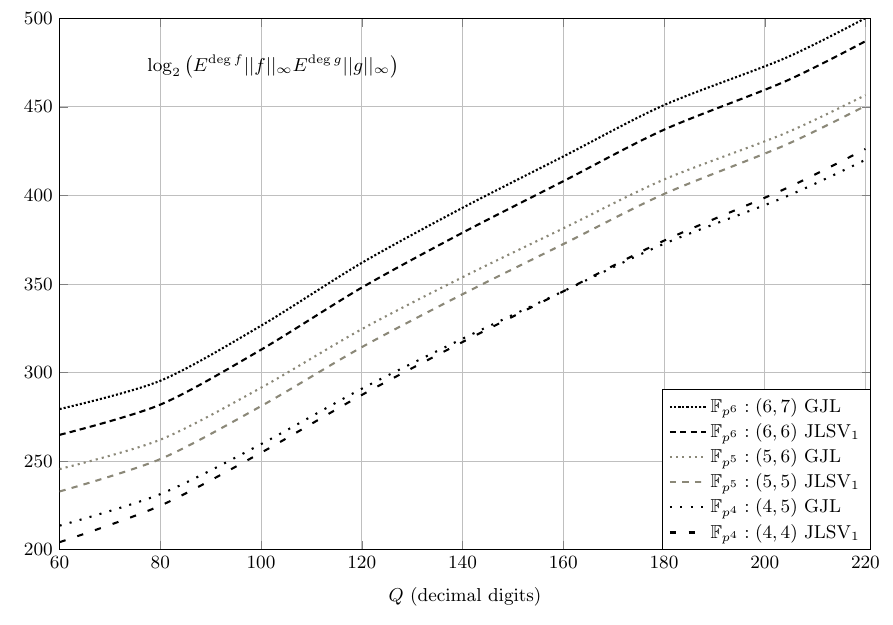} 
}
\subfigure[$Q=p^2$ or $p^3$: Conjugation or GJL method]{
  \includegraphics{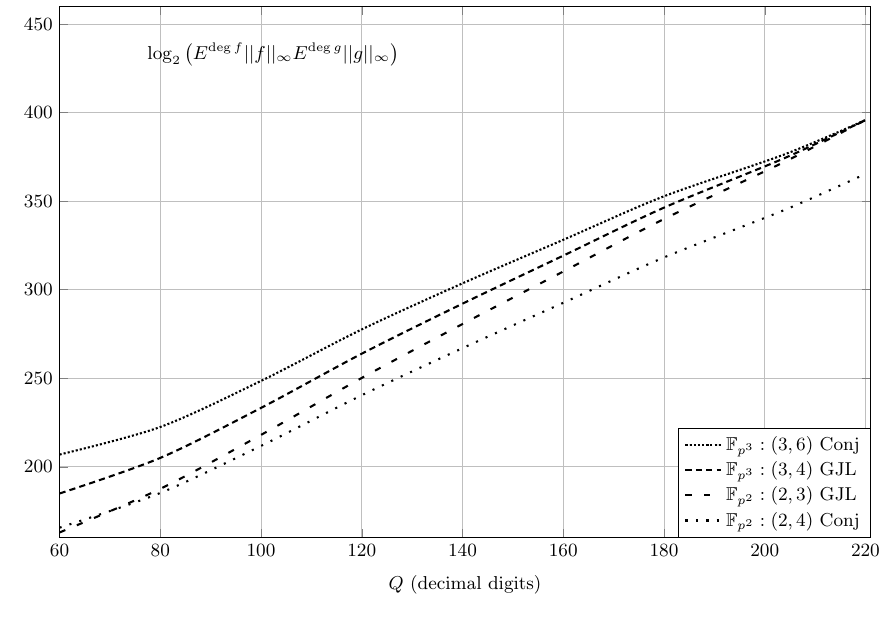} 
}
\caption{Estimation of
\eqref{eq:norm-product-f-g} for various
pairs $(\deg f, \deg g)$ selected with our two methods for computing
discrete logarithms in $\F_{p^n}$ with $n \in \{2,3,4,5,6\}$.} 
\label{fig:norm-product-f-g--wrt-Q--Fpn}
\end{figure}

\subsection{Improving the selected polynomials} 
\label{subsec: improved g in polyselect}

We explained in Sec.~\ref{subsec: Joux Lercier polyselect} our generalized
Joux-Lercier method and in Sec.~\ref{subsec: our polyselect} our method of
conjugated polynomials. In both cases when $\deg \varphi \geqslant 2$ one
obtains two distinct reduced polynomials $g_1$ and $g_2 \in \ZZ[x]$ such
that $g_1 \equiv g_2 \equiv \varphi \bmod p$ up to a coefficient in $\Fp$.
We propose to search for a polynomial $g = \lambda_1 g_1 + \lambda_2 g_2$
with $\lambda_1, \lambda_2 \in \ZZ$ small, e.g. $|\lambda_1|, |\lambda_2| <
200$ that maximises the Murphy $E$ value of the pair $(f,g)$. 

The Murphy $\EE$ value is explained in \cite[Sec.~5.2.1, Eq.~5.7
p.~86]{Mur99}.  This is an estimation of the smoothness properties of the
values taken by either a single polynomial $f$ of a pair $(f,g)$. First one
homogenizes $f$ and $g$ and defines $$ u_f(\theta_i) = \frac{\log |f(\cos
\theta_i, \sin \theta_i)| + \alpha(f)}{\log B_f}  $$ with $\theta_i \in
\left[ 0, \pi \right]$, more precisely, $\theta_i = \frac{\pi}{K} \left(i -
\frac{1}{2}\right) $ (with e.g.  $K = 2000$ and $i \in \{1, \ldots, K \}$),
$\alpha(f)$ defined in \cite[Sec.~3.2.3]{Mur99} and $B_f$ a smoothness
bound set according to $f$.  Murphy advises to take $B_f = 1\e7$ and $B_g =
5\e6$.  Finally $$ \EE(f,g) = \sum_{i=1}^{K} \rho (u_f(\theta_i)) \rho (u_g
(\theta_i)) ~.$$

We propose to search for $g = \lambda_1 g_1 + \lambda_2 g_2$ with
$|\lambda_i| < 200$ and such that $\EE(f,g)$ is maximal. In practice we
obtain $g$ with $\alpha(g) \leqslant -1.5$ and $\EE(f,g)$ improved of 2\%
up to 30 \%.

\section{Asymptotic complexity}
\label{sec:complexity}
The two new methods of polynomial selection require a dedicated analysis
of complexity. First, we show that the generalized Joux-Lercier method
offers an alternative to the existing method of polynomial selection in
large characteristic~\cite{JLSV06} and determine the range of
applicability in the boundary case. When getting close to the limit, it
provides the best known complexity. Second, we analyze the conjugation
method and obtain the result announced in the introduction, namely the
existence of a family of finite fields for which the complexity of
computing discrete logarithms is in $L_Q(1/3,\sqrt[3]{48/9})$.

\subsection{The generalized Joux-Lercier method} Using the generalized
Joux-Lercier method, one constructs two polynomials $f$ and $g$ such that,
for a parameter $d\geq n$, we have $\deg f=d+1$, $\deg g=d$,
$\norm{g}_\infty=Q^{1/d}$ and $\norm{f}_\infty$ is very small, say $O(\log
Q)$. 

We consider the variant of NFS in which one sieves on linear polynomials
$a-bx$ such that $|a|,|b|\leq E$ for a sieve parameter $E$, in order to
collect the pairs such that the norms $\Reslt(f,a-bx)$ and $\Reslt(g,a-bx)$ are
$B$-smooth.

Since the cost of the sieve is $E^{2+o(1)}$ and the cost of the linear
algebra stage is $B^{2+o(1)}$, we impose $E=B$. We set $E=B=L_Q(1/3,\beta)$
for a parameter $\beta$ to be chosen. We write
$d=\frac{\delta}{2}\left(\log Q/\log\log Q\right)^{1/3}$, for a parameter
$\delta$ to be chosen.

Since the size of the sieving domain must be large enough so that we
collect $B$ pairs $(a,b)$, we must have $\mathcal{P}^{-1}=B$, where
$\mathcal{P}$ is the probability that a random pair $(a,b)$ in the sieving
domain has $B$-smooth norms. We make the usual assumption that the product
of the norms of any pair $(a,b)$ has the same probability to be $B$-smooth
as a random integer of the same size. We upper-bound the norms product by
\begin{equation} |\Reslt(f,a-bx)\Reslt(g,a-bx)|\leq (\deg f)\norm{f}_\infty
E^{\deg f}(\deg g)\norm{g}_\infty E^{\deg g}, \end{equation} and further,
with the $L$-notation, we obtain \begin{equation}
|\Reslt(f,a-bx)\Reslt(g,a-bx)|\leq L_Q\left(2/3, \delta\beta +\frac{2}{\delta}
\right).  \end{equation} Using the Canfield-Erdös-Pomerance theorem, we
obtain \begin{equation} \mathcal{P}=1/L_Q\left(1/3,
\frac{\delta}{3}+\frac{2}{3\beta\delta}\right).  \end{equation} The
equality $\mathcal{P}^{-1}=B$ imposes \begin{equation}
\beta=\frac{\delta}{3}+\frac{2}{3\beta\delta}.  \end{equation} The optimal
value of $\delta$ is the one which minimizes the expression in the right
hand member, so we take $\delta=\sqrt{2/\beta}$ and we obtain
$\beta=2/3\sqrt{2/\beta}$, or equivalently $\beta=\sqrt[3]{8/9}$. Since the
complexity of NFS is $E^2+B^2=L_Q(1/3,2\beta)$, we obtain \begin{equation}
    \text{complexity}(\text{NFS with Generalized Joux-Lercier})
=L_Q\left(1/3,\sqrt[3]{64/9}\right).  \end{equation}

The method requires $n\leq d$. Since $d=\delta/2 \left(\frac{\log Q}{\log\log
Q} \right)^{1/3}$ with $\delta=\sqrt{2/\beta}=\sqrt[3]{3}$, the method
applies only to fields $\F_{p^n}$ such that \begin{equation} p\geq
L_Q\left(2/3,\sqrt[3]{8/3}\right).  \end{equation}

\subsection{The conjugation method} The conjugation method allows us to
construct two polynomials $f$ and $g$ such that $\deg f=2n$, $\deg g=n$,
$\norm{g}_\infty\approx p^{1/2}$ and $\norm{f}_\infty$ is very small, say $O(\log
Q)$. We study first the case of medium characteristic and then the boundary
 case between medium and large characteristic. We start with those
computations which are common for the two cases.

\subsubsection{Common computations}
We consider the higher degree variant of NFS of parameter $t$, i.e.~one
sieves on polynomials $\phi$ of degree $t-1$, with coefficients of absolute
value less than $E^{2/t}$, where $E$ is called the sieve parameter. The
cost of the sieve is then $E^{2+o(1)}$. Since cost of the
linear algebra stage is $B^{2+o(1)}$, where $B$ is the smoothness bound, we
impose $E=B$ and we write $E=B=L_Q(1/3,\beta)$, for some parameter $\beta$
to be chosen.  Then the product of the norms of $\phi(\alpha)$ and
$\phi(\beta)$ for any polynomial $\phi$ in the sieve domain is
\begin{equation*} | \Reslt(\phi,f)\Reslt(\phi,g) | \leq (\deg f+t)!(\deg g+t)!
E^{4n/t}\norm{f}_\infty^{t-1}E^{2n/t}\norm{g}_\infty^{t-1}.
\end{equation*} Since $(\deg f+t)!(\deg f+t)!\leq L_Q(2/3, o(1))$, this
factor's contribution will be negligible compared to the main term which
is in $L_Q(2/3)$. Therefore we have
\begin{equation*} 
| \Reslt(\phi,f)\Reslt(\phi,g) | \leq \left(
E^{6n/t}Q^{(t-1)/2n}\right)^{1+o(1)}.
  \end{equation*} We make the usual assumption that the norms product has
the same probability to be $B$-smooth as a random integer of the same size.  

\subsubsection{The medium characteristic case}
Let us set the value of the number of terms in the sieve:
\begin{equation}
t=c_t n \left(\frac{\log Q}{\log\log Q} \right)^{-1/3}.
\end{equation}
The probability that a polynomial $\phi$ in the sieving domain has
$B$-smooth norms is
\begin{equation}
\mathcal{P}=1/L_Q\left(1/3,\frac{2\beta}{c_t}+\frac{c_t}{6} \right).
\end{equation}
We choose $c_t=2\sqrt{3\beta}$ in order to minimize the right hand member:
\begin{equation}
\mathcal{P}=1/L_Q\left(1/3,2\sqrt{\beta/3}\right).
\end{equation}

In an optimal choice of parameters, the sieve produces just enough
relations, so we require that $\mathcal{P}^{-1}=B$, and equivalently
$\beta=\sqrt[3]{4/3}$. We obtain
\begin{equation}
\text{complexity}(NFS\text{ with medium
char.})=L_Q\left(1/3,\sqrt[3]{96/9}\right).
\end{equation}

\subsubsection{The boundary case}
For every constant $c_p>0$, we consider the family of finite fields
$\F_{p^n}$ such that 
\begin{equation} 
p=L_{p^n}(2/3,c_p)^{1+o(1)}.
\end{equation}

The parameter $t$ is a constant, or equivalently we have a different
algorithm for each value $t=2,3,\ldots$.

Then the probability that a
polynomial $\phi$ in the sieving domain has $B$-smooth norms is
\begin{equation} \mathcal{P}=1/L_Q\left(1/3,
\frac{2}{c_pt}+\frac{c_p(t-1)}{6\beta}  \right).  \end{equation} 
If the parameters are tuned to have just enough relations in the sieve,
then one has $\mathcal{P}^{-1}=B$. This leads to
$\frac{2}{c_pt}+\frac{c_p(t-1)}{6\beta}=\beta$, or
$\beta=\frac{1}{c_pt}+\sqrt{\frac{1}{(c_pt)^2}+\frac16 c_p(t-1)}$. Hence,
the complexity of NFS with the conjugation method is: \begin{equation}
    \text{complexity(NFS with the conjugation method)}=L_Q\left(1/3,
\frac{2}{c_pt}+\sqrt{\frac{4}{(c_pt)^2}+\frac23c_p(t-1)}  \right).
\end{equation}

In Figure~\ref{fig:complexities}, we have plotted the complexities of
various methods, including the Multiple number field sieve variant
of~\cite{BarPie2014}. There are some ranges of the parameter $c_p$ where
our conjugation method is the fastest and a range where the generalized
Joux-Lercier method is the fastest. 
The best case for our new method corresponds to the case 
where $c_p = 12^{1/3}\approx 2.29$ and $t=2$. In that case we get:
\begin{equation}
\text{complexity}\text{(best
case for the conjugation method)}=L_Q\left(1/3,\sqrt[3]{\frac{48}{9}}\right).  \end{equation}

\begin{figure} \begin{center} \includegraphics{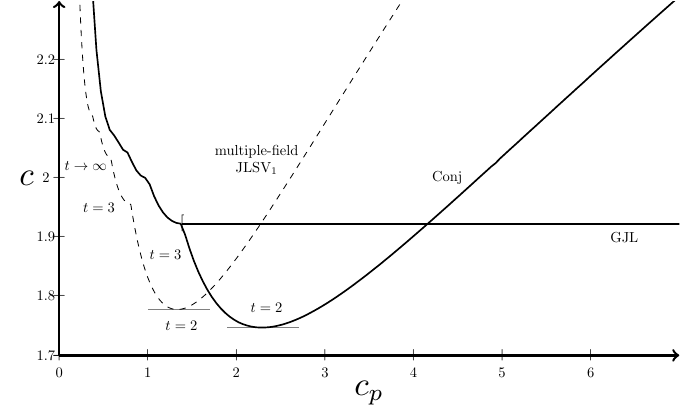}
\end{center}
\caption{The complexity of NFS for fields $\F_{p^n}$ with
$p=L_Q(2/3,c_p)$ is $L_Q(1/3,c)$. The blue curve corresponds to the
multiple number field sieve of~\cite{BarPie2014}, the green semi-line
to the generalized Joux-Lercier method and the red thick curve to the conjugation method.}
\label{fig:complexities} \end{figure}

\section{Effective computations of discrete logarithms}
\label{sec:effective}

In order to test how our ideas perform in practice, we did a medium-sized
practical experiment in a field of the form $\F_{p^2}$. Since we could
not find any publicly announced computation for this type of field, we
have decided to choose a prime number $p$ of 80 decimal digits so that 
$\F_{p^2}$ has size 160 digits. 
To demonstrate that our approach is not specific to a particular form of
the prime, we took the first 80 decimal digits of $\pi$. 
Our prime number $p$ is the next prime such that $p \equiv 7 \bmod 8$ and 
both $p+1$ and $p-1$ have a large prime factor: 
$p = \lfloor \pi \cdot 10^{79} \rfloor + 217518 $.

\begin{scriptsize}
$$\begin{array}{rcl}
p    & = & \mathtt{31415926535897932384626433832795028841971693993751058209749445923078164063079607} \\
\ell & = & \mathtt{3926990816987241548078304229099378605246461749218882276218680740384770507884951} \\
p-1  & = & 6 \cdot h_0 \mbox{ with } h_0 \mbox{ a 79 digit prime} \\
p+1  & = & 8 \cdot \ell \\
\end{array}$$
\end{scriptsize}
We tried to solve the discrete logarithm
problem in the order $\ell$ subgroup. We imposed $p$ to be
congruent to $-1$ modulo 8, so that the polynomial $x^4+1$ could be used,
as in Section~\ref{sec:exdeg4}, so that no Schirokauer map is
needed. The conjugation method yields a polynomial $g$ of degree $2$
and negative discriminant, a particular case that requires no
Schirokauer map either:
$$\begin{array}{rcl}
f & = & x^4 + 1 \\
g & = & 22253888644283440595423136557267278406930\ x^2 \\
  &   & \ +\, 41388856349384521065766679356490536297931\ \ x \\
  &   & \ +\, 22253888644283440595423136557267278406930\ \,. \\
\end{array}$$
Since $p$ is 80 digits long, the coefficients of $g$ have almost 40 digits 
(precisely 41 digits). The polynomials $f$ and $g$ have the irreducible factor

{\scriptsize
$$\varphi(t) = t^2 +
8827843659566562900817004173601064660843646662444652921581289174137495040966990\, t + 1$$}
in common modulo $p$, and $\GFpn{p}{2}$ will be taken as
$\GFq{p}[X]/(\varphi)$.

The relation collection step was then done using the sieving software of
CADO~\cite{CADO}. More precisely, we used the special-$\gq$ technique for
ideals $\gq$ on the $g$-side, since it produces norms that are larger
than on the $f$-side. We sieved all the special-$\gq$ larger than 
$40,000,000$ and smaller than $2^{27}$, keeping only one in each pair of
conjugates, as explained in Section~\ref{sec:galois}. In total, they
produced about $15$M relations. The main parameters in
the sieve were the following: we sieved all primes below $40$M, and we
allowed two large primes less than $2^{27}$ on each side. The search
space for each special-$\gq$ was set to $2^{15}\times 2^{14}$ (the
parameter {\tt I} in CADO was set to 15).

The total CPU time for this relation collection step is equivalent to 68
days on one core of an Intel Xeon E5-2650 at 2 GHz. This was run in
parallel on a few nodes, each with 16 cores, so that the elapsed time for
this step was a few days, and could easily be made arbitrary small with
enough nodes.

The filtering step was run as usual, but we modified it to take into
account the Galois action on the ideals: we selected a representative
ideal in each orbit under the action $x\mapsto x^{-1}$, and rewrote all
the relations in terms of these representatives only. This amounts just
to keep track of sign-change, that has to be reminded when combining two
relations during the filtering, and when preparing the sparse matrix for
the sparse linear algebra step. The output of the filtering step was a
matrix with $839,244$ rows and columns, having on average $83.6$ non-zero
entries per row.

Thanks to our choice of $f$ and $g$, it was not necessary to add
columns with Schirokauer maps. We used Jeljeli's implementation of
Block Wiedemann's algorithm for GPUs \cite{JeljeliImplem}. In fact, this was
a small enough computation so that we did not distribute it on several
cards: we used a non-blocked version. The total running time for this
step was around 30.3 hours on an NVidia GTX 680 graphic card.

At the end of the linear algebra we know the virtual logarithms of almost
all prime ideals of degree one above primes of at most 26 bits, and of
some of those above primes of 27 bits. At this point we could test that
the logs on the f-side were correct.

The last step is that of computing some individual logarithms.
We used $G = t + 2$ as a generator for $\GFpn{p}{2}$ and
the following ``random'' element:
$$s = \lfloor(\pi  (2^{264})/4)\rfloor t + \lfloor(\gamma\cdot
2^{264})\rfloor.$$
We started by looking for an integer $e$ such that $z = s^e$, seen as an
element of the number field of $f$, is smooth. After a few core-hours, we
found a value of $e$ such that $z = z_1/z_2$ with $z_1$ and $z_2$
splitting completely
into prime ideals of at most 60 bits. With the lattice-sieving software
of CADO-NFS, we then performed a "special-q descent" for each of these
prime ideals. We remark that one of the prime ideals in $z_1$ was an ideal
of degree 2 above 43, that had to be descended in a specific way,
starting with a polynomial of degree 2 instead of 1. The total time for
descending all the prime ideals was a few minutes. Finally, we found

{\scriptsize
$$\log_G(s) = 431724646474717499532141432099069517832607980262114471597315861099398586114668 \bmod \ell.$$}
Verification scripts in various mathematical software are given in the
NMBRTHRY announcement.


\newcommand{\etalchar}[1]{$^{#1}$}
\def\noopsort#1{}\ifx\bibfrench\undefined\def\biling#1#2{#1}\else\def\biling#1%
#2{#2}\fi\def\Inpreparation{\biling{In preparation}{en
  pr{\'e}paration}}\def\Preprint{\biling{Preprint}{pr{\'e}version}}\def\Draft{%
\biling{Draft}{Manuscript}}\def\Toappear{\biling{To appear}{\`A para\^\i
  tre}}\def\Inpress{\biling{In press}{Sous presse}}\def\Seealso{\biling{See
  also}{Voir {\'e}galement}}\def\Editor{\biling{Ed.}{R{\'e}d.}}

\end{document}